\documentclass[11pt,a4paper,english,reqno]{amsart}
\usepackage[a4paper,footskip=1.5em,bottom=38mm,margin=2.34cm]{geometry}
\usepackage{amsmath,amssymb,amsthm,mathtools}
\usepackage[none]{hyphenat}
\usepackage{comment}
\usepackage{tikz}
\usetikzlibrary{math,calc,intersections}
\usetikzlibrary{positioning,arrows,shapes,decorations.markings,decorations.pathreplacing,matrix,patterns}
\tikzstyle{vertex}=[circle,draw=black,fill=black,inner sep=0,minimum size=3pt,text=white,font=\footnotesize]
\usepackage{graphicx}
\usepackage{hyperref}
\hypersetup{
	colorlinks=true,
	linkcolor=blue,
	filecolor=magenta,
	urlcolor=cyan,
	citecolor=blue
}

\newtheorem{theorem}{Theorem}[section]
\newtheorem{conjecture}[theorem]{Conjecture}
\newtheorem{lemma}[theorem]{Lemma}
\newtheorem{claim}[theorem]{Claim}

\newtheorem{definition}{Definition}
\usepackage{amssymb,amsmath}

\title{The extremal number of surfaces}
\author{Andrey Kupavskii}
\address{G-SCOP, CNRS, University Grenoble-Alpes, France and Moscow Institute of Physics and Technology, Russia}
\email{kupavskii@ya.ru}
\author{Alexandr Polyanskii}
\address{MIPT Moscow}
\email{alexander.polyanskii@yandex.ru}
\author{Istv\'an Tomon}
\address{ETH Zurich}
\email{istvan.tomon@math.ethz.ch}
\author{Dmitriy Zakharov}
\address{MIPT Moscow \and HSE Moscow}
\email{zakharov2k@gmail.com}

\date{}

\begin{document}

\sloppy

\maketitle

\begin{abstract}
In 1973, Brown, Erd\H{o}s and S\'os proved that if $\mathcal{H}$ is a 3-uniform hypergraph on $n$ vertices which contains no triangulation of the sphere, then $\mathcal{H}$ has at most $O(n^{5/2})$ edges, and this bound is the best possible up to a constant factor. Resolving a conjecture of Linial, also reiterated by Keevash, Long, Narayanan, and Scott, we show that the same result holds for triangulations of the torus. Furthermore, we extend our result to every closed orientable surface $\mathcal{S}$.
\end{abstract}

\section{Introduction}
Let $\mathcal{F}$ be a (possibly infinite) family of $r$-uniform hypergraphs. The \emph{Tur\'an number} or \emph{extremal number} of $\mathcal{F}$, denoted by $\mbox{ex}(n,\mathcal{F})$, is the maximum number of edges in an $r$-uniform hypergraph on $n$ vertices which does not contain any member of $\mathcal{F}$ as a subhypergraph. The study of extremal numbers of graphs and hypergraphs is one of the central topics in discrete mathematics, which	goes back more than hundred years to the works of Mantel \cite{Ma} in 1907 and Tur\'an \cite{Tu} in 1941. For a general reference, we refer the reader to the surveys \cite{K11,MPS}. 

In this paper, we are interested in the extremal number of families which arise from topology. An $r$-uniform hypergraph $\mathcal{S}$ naturally corresponds to the simplicial complex formed by the subsets of the edges of $\mathcal{S}$. Therefore, we can talk about \emph{homeomorphisms} between hypergraphs, meaning the homeomorphisms of the corresponding simplicial complexes. Linial \cite{L08,L18}, as a part of the `high-dimensional combinatorics' programme, proposed to study the following extremal question. Given an $r$-uniform hypergraph $\mathcal{S}$, at most how many edges can an $r$-uniform hypergraph on $n$ vertices have if it does not contain a homeomorphic copy of $\mathcal{S}$? Let us denote this number by $\mbox{ex}_{hom}(n,\mathcal{S})$. In the case of $r=2$, a celebrated result of Mader \cite{M67} tells us that if the graph $G$ with $n$ vertices avoids a subdivision of the complete graph $K_t$, then $G$ has at most $cn$ edges, where $c=c(t)$ depends on $t$ only. Since every subdivision of a graph $H$ is homeomorphic to $H$, we deduce that $\mbox{ex}_{hom}(n,H)$ is linear for any fixed graph $H$. 

In this paper, we consider the case $r=3$. Whenever appropriate, we shall talk about triangulations of surfaces rather than homeomorphisms of hypergraphs. An old result of Brown, Erd\H{o}s and S\'os \cite{BES73} states that if a $3$-uniform hypergraph on $n$ vertices does not contain a triangulation of the sphere, then $\mathcal{H}$ has $O(n^{5/2})$ edges, and this bound is the best possible up to the constant factor. Inspired by this result, it is natural to study the related problem for the torus and other surfaces as well. Indeed, Linial \cite{L08,L18} proposed the conjecture that if a  $3$-uniform hypergraph on $n$ vertices does not contain a triangulation of the torus, then $\mathcal{H}$ has at most $O(n^{5/2})$ edges. He proved (unpublished, see \cite{L18} for an outline of the approach he suggests) that if this is true, then this bound is best possible up to a constant factor, and together with Friedgut \cite{L08} they proved the upper bound $O(n^{3-1/3})$. Our main result is the resolution of this conjecture.

\begin{theorem}\label{thm:torus}
There exists a constant $c>0$ such that if $\mathcal{H}$ is a 3-uniform hypergraph on $n$ vertices which does not contain a triangulation of the torus, then $\mathcal{H}$ has at most $cn^{5/2}$ edges.
\end{theorem}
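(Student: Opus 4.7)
My plan is to follow the Brown--Erd\H{o}s--S\'os strategy for the sphere (which produces a triangulated bipyramid) and then convert the bipyramid into a torus by attaching a handle.

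\emph{Step 1 (Cleaning).} Iteratively delete any vertex whose link has fewer than $c_1 n^{3/2}$ edges. The total edge loss is $O(n \cdot n^{3/2}) = O(n^{5/2})$, so if the constant $c$ in the statement is chosen sufficiently large, the remaining sub-hypergraph $\mathcal{H}'$ still has $\Omega(n^{5/2})$ edges and every link $L_v$ has at least $c_1 n^{3/2}$ edges.

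\emph{Step 2 (A common cycle on many apices).} Since each $L_v$ is a graph on $n$ vertices with $\Omega(n^{3/2})$ edges, a K\H{o}v\'ari--S\'os--Tur\'an/supersaturation bound gives it many cycles of any fixed even length $2k$. Applying this inside each link, double counting pairs $(v,C)$ and pigeonholing on the labelled cycle yields a single $2k$-cycle $C = x_1 x_2 \cdots x_{2k}$ that is contained in the link of a large set $A$ of ``apex'' vertices. Each $a \in A$ produces a cone $a \ast C$, a triangulated disk with boundary $C$, and distinct apices yield internally disjoint cones --- this is exactly the sphere setup of Brown--Erd\H{o}s--S\'os.

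\emph{Step 3 (From the bipyramid to a torus).} Any two apices $a_1, a_2 \in A$ give a bipyramid $a_1 \ast C \cup a_2 \ast C$, a triangulated sphere. To install a handle, fix $k\ge 2$, remove the interior-disjoint triangles $t_1=\{a_1,x_1,x_2\}$ and $t_2=\{a_2,x_3,x_4\}$, and glue in a triangulated cylinder $Z\subset \mathcal{H}'$ whose two boundary triangles are $\partial t_1$ and $\partial t_2$. Topologically, the bipyramid minus two open triangles is a cylinder, and orientation-preserving gluing of two cylinders along both boundary circles produces a closed orientable surface of Euler characteristic zero, i.e.\ a torus.

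\emph{Main obstacle.} The heart of the argument is exhibiting the cylinder $Z$ inside $\mathcal{H}'$. The minimal option --- a triangular prism on the six vertices $a_1,a_2,x_1,\ldots,x_4$ --- asks for six specific triangles in $\mathcal{H}'$, which is not forced by density alone. I would therefore allow $Z$ to use fresh ``middle-ring'' auxiliary vertices and then search over the $\Omega(|A|^2)$ choices of apex pair, the $\Omega(k^2)$ choices of consecutive $C$-edge pair for $(t_1,t_2)$, and additional apices $a_3, a_4 \in A$ used to seed $Z$; the finitely many triangles of $Z$ are then realized one at a time by the codegree lower bounds inherited from Step 1. A final topological check --- that the gluing is orientation-preserving and that all pieces are internally disjoint, which is arranged by reserving fresh vertices at each selection --- confirms that the resulting 2-complex is a triangulation of $T^2$ rather than a Klein bottle or a pinched pseudomanifold.
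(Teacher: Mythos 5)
Your Step 3 is the crux of the argument, and as written it doesn't work; I'll also flag a smaller issue in Step 2.

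The main gap is the phrase ``the finitely many triangles of $Z$ are then realized one at a time by the codegree lower bounds inherited from Step 1.'' Step 1 only removes vertices whose \emph{link} has few edges; it guarantees that $|E(L_v)| \geq c_1 n^{3/2}$ for every surviving vertex $v$, but it gives no lower bound on any individual codegree $|N(u,v)|$. After cleaning, a typical pair $\{u,v\}$ can still have $N(u,v) = \emptyset$. So there is no supply of codegree to draw on when you try to build up the cylinder $Z$ triangle by triangle. Finding a triangulated cylinder whose two boundary triangles are \emph{prescribed} in advance (namely $\partial t_1$ and $\partial t_2$) is essentially as hard as the whole problem: it requires the same kind of robust, local ``spheres-everywhere'' phenomenon that the paper establishes through the machinery of $(p,\epsilon,k)$-admissibility (Lemmas~\ref{lemma:admissible}--\ref{lemma:3admissible}). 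That machinery, using random vertex sampling plus Mader's theorem on $k$-connected subgraphs, is precisely what lets the paper attach a sphere to \emph{each} pair of consecutive edges of a topological cycle while keeping the attached spheres disjoint outside the cycle. Your acknowledgment of the obstacle is correct, but the proposed resolution (search over apex pairs, edge pairs, auxiliary apices, reserve fresh vertices) does not supply the needed structure.

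A secondary issue is Step 2. You claim that supersaturation inside each link, plus a pigeonhole over labelled cycles, produces a single $2k$-cycle $C$ lying in the links of a large set $A$ of apices. Count: each link on $n$ vertices with $\Theta(n^{3/2})$ edges has $\Theta(n^2)$ labelled $C_4$'s, so the total number of pairs $(v,C)$ is $\Theta(n^3)$, while the number of labelled $4$-cycles on $n$ vertices is $\Theta(n^4)$. Averaging gives that a random $C_4$ lies in $O(1/n)$ links, which is useless; the same mismatch persists for $C_{2k}$ with $k$ fixed. (Brown--Erd\H{o}s--S\'os do not need a cycle common to many links: they find \emph{two} apices $x,x'$ with $|E(L_x \cap L_{x'})| \geq n$ by a different averaging, and then take any cycle in that intersection graph.) So even the source of a large apex set $A$, which your Step 3 needs for its ``search,'' is not established. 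In contrast, the paper sidesteps the need for a common boundary cycle entirely: rather than one bipyramid with a handle glued in, it builds a chain of \emph{separate} spheres glued consecutively along a torus-like topological cycle (Lemma~\ref{lemma:toplogy} and Lemma~\ref{lemma:embedding_with_cyle}), and the admissibility framework guarantees these spheres can be found with pairwise disjoint interiors. Your route is genuinely different conceptually, but in its current form both the apex-set step and the handle-construction step are unproven, and the second looks as hard as the theorem itself.
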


 We can also generalize our result to every closed orientable surface. A surface is \emph{closed} if it is compact and without boundary. By the Classification theorem of closed surfaces (see Theorem \ref{thm:class}), every closed orientable surface is homeomorphic a sphere with $g$ handles for some $g\geq 1$.

\begin{theorem}\label{thm:orientable}
Let $\mathcal{S}$ be a closed orientable surface. There exists $c=c(\mathcal{S})>0$ such that if $\mathcal{H}$ is a 3-uniform hypergraph on $n$ vertices which does not contain a triangulation of $\mathcal{S}$, then $\mathcal{H}$ has at most $cn^{5/2}$ edges.
\end{theorem}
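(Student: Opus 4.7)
The plan is to prove Theorem~\ref{thm:orientable} by induction on the genus $g$ of $\mathcal{S}$, with the base case $g=1$ being Theorem~\ref{thm:torus}. For the induction step I would realise the genus-$g$ orientable surface as a ``chain'' of $g$ tori sitting inside $\mathcal{H}$. Fix once and for all a minimal triangulation $T^{*}$ of the torus (on $7$ vertices, with $1$-skeleton $K_{7}$), and pick two vertex-disjoint triangular faces $f_{L},f_{R}$ of $T^{*}$ (the $7$-vertex torus triangulation is easily checked to contain many such pairs). Suppose we can produce copies $T_{1},\dots,T_{g}$ of $T^{*}$ inside $\mathcal{H}$ so that $V(T_{i})\cap V(T_{i+1})$ is exactly the $3$-vertex set of a common face $f_{i,i+1}$ (identifying $f_{R}$ in $T_{i}$ with $f_{L}$ in $T_{i+1}$), the tori are otherwise vertex-disjoint, and the shared triangles $f_{i,i+1}$ are pairwise vertex-disjoint. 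Then $\Sigma:=(T_{1}\cup\cdots\cup T_{g})\setminus\{f_{1,2},\dots,f_{g-1,g}\}$ is a subhypergraph of $\mathcal{H}$. A direct link-of-vertex check (each vertex link is either a cycle coming from a single $T_{i}$, or the union of two arcs in $T_{i},T_{i+1}$ sharing their endpoints, again a cycle) together with Euler's formula $\chi(\Sigma)=\sum_{i}\chi(T_{i})-2(g-1)=2-2g$ show that $\Sigma$ is a closed surface of genus $g$, and orientability follows because a connected sum of orientable surfaces is orientable; by Theorem~\ref{thm:class} we conclude $\Sigma\cong\mathcal{S}$.

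To build the chain greedily, first apply Theorem~\ref{thm:torus} to obtain $T_{1}\cong T^{*}$ in $\mathcal{H}$. Then, assuming $T_{1},\dots,T_{i}$ are already placed on a constant-size vertex set $V_{i}$, the next step is to produce $T_{i+1}\cong T^{*}$ in $\mathcal{H}$ whose face $f_{L}$ is exactly the prescribed triangle $t:=f_{R}(T_{i})$ and whose remaining four vertices lie outside $V_{i}$. This is a \emph{rooted} variant of Theorem~\ref{thm:torus}: given a triangle $t\in E(\mathcal{H})$, find a copy of $T^{*}$ in $\mathcal{H}$ containing $t$ as $f_{L}$ while avoiding any prescribed bounded set of vertices. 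I would derive this rooted statement by re-entering the proof of Theorem~\ref{thm:torus} with an explicit anchor: a Brown--Erd\H{o}s--S\'os-style argument naturally locates the torus around a pair $\{u,v\}$ of vertices with common link of size $\Omega(\sqrt{n})$ in $\mathcal{H}$, so one can anchor at an edge $\{u,v\}\subseteq t$ and then select the third vertex of $t$ from the common link. The constant-size vertex restriction is absorbed by the standard deletion trick, because removing $|V_{i}|=O(g)$ vertices costs only $O(n^{2})=o(n^{5/2})$ edges.

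The essential obstacle is this rooted variant of Theorem~\ref{thm:torus}. Ordinary supersaturation is too weak in the $\Theta(n^{5/2})$-edge regime: for $|E(\mathcal{H})|=cn^{5/2}$ the expected number of $T^{*}$-copies in a random $3$-graph of the same density is only a constant, so one cannot simply average over a large pool of copies to locate one through a prescribed triangle. Instead one has to re-enter the proof of Theorem~\ref{thm:torus} and show that its construction can always be anchored at a prescribed high-codegree triangle of $\mathcal{H}$. Once this rooted version is in place, the chain construction, link verification, Euler characteristic computation, and appeal to the Classification Theorem are routine.
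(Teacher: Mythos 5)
Your overall scaffolding — induction on genus, realising the genus-$g$ surface by attaching tori to a growing core, checking links and Euler characteristic and invoking the Classification Theorem — is sound, and it is in the same spirit as what the paper does. But the ``rooted variant of Theorem~\ref{thm:torus}'' you flag is not a side lemma: it is the entire content of the induction step, and the sketch you give for it does not go through. In your chain the triangle $t=f_R(T_i)$ is \emph{prescribed} by the previous step, and nothing forces a pair $\{u,v\}\subset t$ to have common link of size $\Omega(\sqrt n)$; the phrase ``select the third vertex of $t$ from the common link'' makes sense only if $t$ were free, not given. As you yourself note, supersaturation is useless at density $\Theta(n^{-1/2})$ (the expected number of $7$-vertex torus copies is $O(1)$), and the deletion trick only handles the avoidance of $V_i$, not the existence of a copy of $T^*$ through the specific $t$. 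So as written the argument has a genuine gap at its crux.

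The paper closes precisely this gap by a different mechanism, avoiding any rooted embedding. In Section~\ref{sect:surface} it defines an edge $e$ of $\mathcal{H}$ to be $(\mathcal{F},p,\epsilon)$-rich if, after keeping each vertex independently with probability $p$, a copy of some member of $\mathcal{F}$ through $e$ survives in $\mathcal{H}[U]$ with probability $>1-\epsilon$ (conditioned on $e\subset U$). Lemma~\ref{lemma:rich} shows that at most $\mbox{ex}(n,\mathcal{F})/(\epsilon p^r)$ edges fail to be rich: the non-rich edges that survive a random sample form an $\mathcal{F}$-free subhypergraph, so they cannot be too numerous. Lemma~\ref{lemma:gluing} then takes $\mathcal{F}$ to be the genus-$(g-1)$ triangulations and $\mathcal{F}'$ the torus triangulations, finds a single edge $e$ rich for both, $2$-colours $V(\mathcal{H})\setminus e$, and with positive probability produces a red copy of some $\mathcal{T}\in\mathcal{F}$ and a blue copy of some $\mathcal{T}'\in\mathcal{F}'$, both through $e$ and otherwise vertex-disjoint. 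Removing $e$ gives a genus-$g$ triangulation and $\mbox{ex}(n,\mathcal{F}\oplus\mathcal{F}')\le 2^{r+1}\bigl(\mbox{ex}(n,\mathcal{F})+\mbox{ex}(n,\mathcal{F}')\bigr)$, which closes the induction. The moral is that one should not fix the face first and then search for a torus through it, but rather show that almost every face is ``robustly extendable'' in a probabilistic sense, and then pick one that is robust for both pieces simultaneously. Without something of this richness flavour, the rooted step in your plan cannot be completed.
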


 For completeness, we will present a proof for matching lower bounds as well. Our argument is based on the aforementioned approach of Nati Linial \cite{L18}.

\begin{theorem}\label{thm:lower_bound}
Let $\mathcal S$ be a closed surface. Then there exists $c=c(\mathcal{S})>0$ such that the following holds. For every positive integer $n\geq 3$, there exists a $3$-uniform hypergraph on $n$ vertices with at least $c n^{5/2}$ edges that does not contain a triangulation of $\mathcal S$.  
\end{theorem}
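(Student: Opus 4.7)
The plan is to use the probabilistic deletion method, along the lines suggested by Linial \cite{L18}. Consider the random 3-uniform hypergraph $\mathcal{H}_0$ on vertex set $[n]$ in which each triple is included independently with probability $p = c n^{-1/2}$, where $c = c(\mathcal{S}) > 0$ is a small constant to be chosen below. The expected number of hyperedges is $p\binom{n}{3} = \Theta(n^{5/2})$, so it suffices to show that the expected number of sub-hypergraphs of $\mathcal{H}_0$ that triangulate $\mathcal{S}$ is $o(n^{5/2})$; one may then delete one hyperedge per such copy and still retain $\Omega(n^{5/2})$ edges.

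For each $V \geq V_{\min}(\mathcal{S})$ we estimate the expected number of $V$-vertex sub-triangulations of $\mathcal{S}$ inside $\mathcal{H}_0$. Any such triangulation has $F = 2V - 2\chi(\mathcal{S})$ faces, by Euler's formula combined with $2E = 3F$ (each edge lies in exactly two triangles). Let $U_V(\mathcal{S})$ denote the number of isomorphism classes of triangulations of $\mathcal{S}$ with $V$ vertices; classical map-enumeration results (Tutte for $\mathcal{S} = S^2$, Bender--Canfield and others for higher-genus orientable or non-orientable surfaces) give $U_V(\mathcal{S}) \leq C^V$ for some $C = C(\mathcal{S})$. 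Since the number of labeled triangulations on a fixed $V$-element vertex set is at most $V! \cdot U_V(\mathcal{S})$, the expected number of sub-triangulations of $\mathcal{S}$ on $V$ vertices is at most
\[
\binom{n}{V} \cdot V! \cdot C^V \cdot p^{2V - 2\chi(\mathcal{S})} \leq n^V C^V \left(\tfrac{c}{\sqrt{n}}\right)^{2V - 2\chi(\mathcal{S})} = c^{-2\chi(\mathcal{S})}\, (Cc^2)^V\, n^{\chi(\mathcal{S})}.
\]
Choosing $c$ so small that $Cc^2 < 1/2$ and summing over $V \geq V_{\min}(\mathcal{S})$, the total expected number of sub-triangulations of $\mathcal{S}$ in $\mathcal{H}_0$ is $O(n^{\chi(\mathcal{S})}) = O(n^2)$.

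Hence there exists a realization of $\mathcal{H}_0$ with at least $\tfrac{1}{2}\, p\binom{n}{3} = \Omega(n^{5/2})$ hyperedges and at most $O(n^2)$ sub-triangulations of $\mathcal{S}$; deleting one hyperedge per sub-triangulation finishes the proof, as $O(n^2) = o(n^{5/2})$. The main technical point is the exponential upper bound $U_V(\mathcal{S}) \leq C(\mathcal{S})^V$: a naive bound via all possible gluings of triangle sides only yields $V^{O(V)}$, which fails to make the series converge, so one needs to invoke the sharper asymptotic formulas of Tutte and Bender--Canfield--Richmond for maps on closed surfaces (or, alternatively, a direct combinatorial argument bounding the number of 2-dimensional pseudomanifold triangulations with fixed Euler characteristic by a single exponential in the number of vertices).
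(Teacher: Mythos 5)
Your proof is correct and follows essentially the same strategy as the paper: a random hypergraph with edge probability $\Theta(n^{-1/2})$, the Euler-characteristic computation $F = 2V - 2\chi(\mathcal{S})$, an exponential bound on the number of unlabeled triangulations of $\mathcal{S}$ with a given number of vertices, and a final deletion step. The only cosmetic difference is the reference for the exponential enumeration bound (the paper cites Gao's asymptotics for rooted triangulations of a fixed surface, while you cite Tutte and Bender--Canfield--Richmond); both serve the same purpose.
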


In general, Keevash, Long, Narayanan and Scott \cite{KLNS20} proved that for every 3-uniform hypergraph $\mathcal{S}$ there exists $c=c(\mathcal{S})$ such that $\mbox{ex}_{hom}(n,\mathcal{S})\leq cn^{3-1/5}$. It remains open whether the exponent can be pushed down to $5/2$. If true, this would imply our main results.

Our paper is organized as follows. In the next subsections, we introduce our notation and give a brief outline of the proof of Theorem~\ref{thm:torus}. In Section~\ref{sect:lower_bound} we prove Theorem~\ref{thm:lower_bound}; in Section \ref{sect:torus}, we prove Theorem \ref{thm:torus}; in Section \ref{sect:surface}, we prove Theorem \ref{thm:orientable}.

\subsection{Notation}
 As usual, $[n]$ denotes the set $\{1,\dots,n\}$, and if $X$ is a set, $X^{(r)}$ is the family of $r$-element subsets of $X$. If $G$ is a graph and $X\subset V(G)$, then $N(X)$ denotes the \emph{neighborhood} of $X$, that is, the set of vertices $v\in V(G)\setminus X$ adjacent to at least one element of $X$. 

Let $\mathcal{H}$ be a 3-uniform hypergraph. If $x,y,z\in V(\mathcal{H})$, we write $xyz$ instead of $\{x,y,z\}$ (similarly for graphs as well). If $x\in V(\mathcal{H})$, then $\mathcal{H}_{x}$ is the \emph{link-graph} of $x$. If $x,y\in V(\mathcal{H})$, then $N(x,y)$ is the set of vertices $z$ such that $xyz\in E(\mathcal{H})$. Two edges of $\mathcal{H}$ are \emph{neighboring} if they intersect in two vertices. If $G$ and $H$ are graphs on the same vertex set, then $G\cap H$ is the graph on $V(G)$ with edge set $E(G)\cap E(H)$.

We omit the use of floors and ceilings whenever they are not crucial.

\subsection{Outline of the proof}

In this subsection, we briefly outline the proof of the upper bounds. Let us first summarize the argument of Brown, Erd\H{o}s and S\'os \cite{BES73} for finding a triangulation of the sphere. They show that if a 3-uniform hypergraph $\mathcal{H}$ has $n$ vertices and $\Omega(n^{5/2})$ edges, then $\mathcal{H}$ contains a \emph{double pyramid}. 

\begin{definition}
A \emph{double pyramid} is a 3-uniform hypergraph on  $s+2$ vertices $x,x',y_{1},\dots,y_{s}$ for some integer $s\geq 3$, whose edges are $xy_{i}y_{i+1}$ and $x'y_{i}y_{i+1}$ for $i=1,\dots,s$ (indices are taken modulo $s$). The vertices $x$ and $x'$ are called the \emph{apexes} of the double pyramid.
\end{definition}

Indeed, by a simple averaging argument, one can find two vertices $x,x'$ such that the graph $G:=\mathcal{H}_{x}\cap \mathcal{H}_{x'}$ has at least $n$ edges. But then $G$ contains a cycle $y_{1},\dots,y_{s}$, and $x,x',y_{1},\dots,y_{s}$ forms a double pyramid.

Our approach to construct a triangulation of the torus in $\mathcal{H}$ goes by gluing double pyramids together in a cyclic fashion. In order to do this, we show that if $\mathcal{H}$ has $\Omega(n^{5/2})$ edges, then double pyramids are ``all over the place''. More precisely, we show that almost all pairs of neighboring edges $(e,f)$ in $\mathcal{H}$ have the property that even after deleting a large proportion of the vertices in $V(\mathcal{H})\setminus (e\cup f)$ randomly, we can find a certain double pyramid-lire triangulation of the sphere containing $e$ and $f$ with high probability.  Then, we find a sequence of neighboring edges forming a ``cycle'' (for the precise notion of a cycle, see Section \ref{section:topology}), and for any pair of neighboring edges $(e,f)$ in the cycle, we find a sphere containing $e$ and $f$. The `supersaturation' property for triangulations of a sphere described above allows us to choose these spheres to be pairwise disjoint outside the cycle. The union of these spheres contains a triangulation of the torus.

Now let us outline the proof of Theorem \ref{thm:orientable}. Let $\mathcal{S}$ be an orientable surface of genus $g$, then $\mathcal{S}$ is homeomorphic to a sphere with $g$ handles, where each handle can be thought of as a torus glued to the sphere. We show that in general, gluing hypergraphs along an edge does not increase their extremal number by much, which then implies our result.

\section{Lower bound for the extremal numbers of surfaces}\label{sect:lower_bound}

In this section, we present the proof of Theorem \ref{thm:lower_bound}. We remark that the lower bound construction of Brown, Erd\H{o}s and S\'os \cite{BES73} does not work in our case. Indeed, they used the observation that every triangulation of the sphere contains a hypergraph $T$ consisting of five vertices $x,y_1,y_2,y_3,y_4$ and four edges $xy_{i}y_{i+1}$ for $i=1,2,3,4$ (indices are modulo 4), and they proved that the extremal number of $T$ is already $\Omega(n^{5/2})$. Unfortunately, e.g. the torus has triangulations without $T$. Our proof of the lower bound is based on the probabilistic deletion argument.

\begin{proof}[Proof of Theorem \ref{thm:lower_bound}] Fix a triangulation of $\mathcal S$, and let $v,e,f$ stand for the number of vertices, edges, and faces in this triangulation. By the Classification theorem of closed surfaces (see Theorem \ref{thm:class}), there exists $g\ge 0$ (depending on $\mathcal S$ only) such that $v-e+f = 2-g.$ Moreover, we clearly have $2e = 3f,$ and thus $f = 2v-4+2g$ for any triangulation of $\mathcal S.$

Gao \cite{Gao} has determined the asymptotics (as $v\to \infty$)  for the number of unlabelled rooted triangulations of $\mathcal S$ with $v$ vertices. The formula implies that the number $N_v$ of unlabelled triangulations of $\mathcal S$  is at most $C^v$ for some fixed constant $C>1$ (depending on $g$) and any $v$.

We are now ready to prove the theorem. Consider a random hypergraph $\mathcal H$ on $n$ vertices, including each edge in $\mathcal H$ independently and with probability $p = c_0 n^{-1/2}$, where $c_0>0$ is a sufficiently small constant to be determined later. Let us count the expected number $\eta$ of triangulations of $\mathcal S$ in $\mathcal H.$ We have

$$\mathbb{E}\eta = \sum_{v=4}^n \binom{n}{v} v! N_vp^{2v-4+g}\le \sum_{v=4}^nn^vC^v c_0^{2v-4+g}n^{-\frac 12(2v-4+g)}<n^2/2,$$
provided $c_0<(2C)^{-1}.$ Thus, there exists a choice of $\mathcal H$ with at least $\frac {c_0}{2}n^{5/2}$ edges and with at most $n^2$ different triangulations of $\mathcal S.$ Delete one edge from each such triangulation, obtaining a hypergraph $\mathcal H'$ with at least $\frac {c_0}{2} n^{5/2}-n^2 = \Omega(n^{5/2})$ edges and with no triangulation of $\mathcal S.$   
\end{proof}

\section{Torus --- Upper bound}\label{sect:torus}

In this section, we prove Theorem \ref{thm:torus}.

\subsection{Admissible edges}\label{sect:admissibleedges}

Let $p,\epsilon\in (0,1]$ and let $k$ be a positive integer. In what comes, we define the notion of a \emph{$(p,\epsilon,k)$-admissible edge} in a graph $G$. The definition might seem quite convoluted at first, but in exchange it will be convenient to work with it later.

\begin{definition}
Let $e=xy$ be an edge of $G$. Select each vertex of $G$ independently with probability $p$, and let $U$ be the set of selected vertices. Let $A_{e}$ be the event that there are at least $k$ internally vertex disjoint paths with endpoints $x$ and $y$ in $G[U]$ (not counting the length one path $xy$), conditioned on the event that $x,y\in U$. Then $e$ is \emph{$(p,\epsilon,k)$-admissible} if $\mathbb{P}(A_{e})\geq 1-\epsilon$.
\end{definition}

This section is devoted to the proof of the following lemma.

\begin{lemma}\label{lemma:admissible}
Let $p,\epsilon\in (0,1]$, and let $k,n$ be positive integers. If $G$ is a graph on $n$ vertices, then all but at most $\frac{2k}{p^2\epsilon}n$ edges of $G$ are $(p,\epsilon,k)$-admissible.
\end{lemma}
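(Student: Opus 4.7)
The plan is to combine a single random-sampling step with a structural estimate on the number of edges joining pairs of low connectivity. Throughout, for a graph $H$ and $x,y\in V(H)$, write $\kappa_H(x,y)$ for the maximum number of pairwise internally vertex-disjoint $xy$-paths in $H$, counting the edge $xy$ (if present) as one such path. The event $A_e$ in the definition of admissibility is then precisely $\{\kappa_{G[U]}(x,y)\ge k+1\}$, so an edge $e=xy$ fails to be $(p,\epsilon,k)$-admissible exactly when
\[
\mathbb{P}\bigl[\kappa_{G[U]}(x,y)\le k\,\bigm|\,x,y\in U\bigr]>\epsilon.
\]

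First I would sample a single random set $U$ and multiply the previous conditional probability through by $\mathbb{P}[x,y\in U]=p^2$, obtaining, for each non-admissible edge,
\[
\mathbb{P}\bigl[x,y\in U\text{ and }\kappa_{G[U]}(x,y)\le k\bigr]>\epsilon p^2.
\]
Summing over the set $B$ of non-admissible edges and using linearity of expectation gives $\mathbb{E}[f(U)]>\epsilon p^2\cdot|B|$, where
\[
f(H):=\bigl|\{xy\in E(H):\kappa_H(x,y)\le k\}\bigr|.
\]

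The heart of the proof is the deterministic structural bound $f(H)\le 2k\,|V(H)|$ for every graph $H$. To establish it, let $B_H\subseteq H$ be the subgraph whose edges are the ones counted by $f(H)$. If $B_H$ contained a $(k+1)$-vertex-connected subgraph $B'$, then Menger's/Whitney's theorem would guarantee $k+1$ internally vertex-disjoint $xy$-paths inside $B'\subseteq H$ for every pair $x,y\in V(B')$, forcing $\kappa_H(x,y)\ge k+1$ for every edge $xy$ of $B'$ and contradicting $xy\in E(B_H)$. Thus $B_H$ has no $(k+1)$-connected subgraph, and by Mader's theorem its average degree is strictly less than $4k$, so $|E(B_H)|<2k|V(H)|$. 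I expect this structural estimate to be the main obstacle, although it follows cleanly once Mader's classical theorem is invoked.

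Finally, applying the structural bound to $H=G[U]$ and using $|U|\le n$ yields $f(U)\le 2kn$ pointwise, hence $\mathbb{E}[f(U)]\le 2kn$. Combining with the probabilistic lower bound on $\mathbb{E}[f(U)]$ gives $|B|<\frac{2k}{p^2\epsilon}\,n$, which is exactly the content of the lemma.
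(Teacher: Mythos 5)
Your proposal is correct and follows essentially the same route as the paper: both arguments compute $\mathbb{E}$ of the count of ``low-connectivity'' edges in $G[U]$, lower-bound it by $\epsilon p^{2}$ times the number of non-admissible edges, and upper-bound it deterministically by $2kn$ via the Menger/Mader combination. The only cosmetic difference is that you bound the count pointwise and then take expectations, while the paper fixes one favorable choice of $U$; the underlying inequality and both ingredients (Menger for the ``no $(k+1)$-connected subgraph'' step, Mader for the edge-count bound) are identical.
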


In the proof of this lemma, we will use the following well known theorem of Mader \cite{M72}, which tells us that every graph of large average degree contains a graph of high connectivity.

\begin{claim}\label{claim:connected}[Mader's theorem]
	If $G$ is a graph with average degree at least $4k$, then $G$ contains a ${(k+1)}$-vertex-connected subgraph.
\end{claim}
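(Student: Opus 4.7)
The plan is to prove Mader's theorem by an extremal argument on subgraphs obeying a carefully tuned density condition. Let $\mathcal{F}$ denote the family of subgraphs $H\subseteq G$ with $|V(H)|\geq 2k+1$ and $|E(H)| > 2k(|V(H)|-k)$. The hypothesis that $G$ has average degree at least $4k$ gives $|E(G)| \geq 2k|V(G)| > 2k(|V(G)|-k)$ and forces $|V(G)|\geq 4k+1$, so $G\in\mathcal{F}$ and the family is nonempty. I would then pick $H\in\mathcal{F}$ minimizing $|V(H)|$, and among such choices minimizing $|E(H)|$, and aim to show that this $H$ is $(k+1)$-vertex-connected.

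The first step will be to show that every vertex of $H$ has degree at least $2k+1$. If some $v\in V(H)$ had degree at most $2k$, then $H-v$ would have $|V(H)|-1$ vertices and more than $2k(|V(H)|-k)-2k = 2k(|V(H)|-1-k)$ edges, so it would lie in $\mathcal{F}$ provided $|V(H)|-1\geq 2k+1$, contradicting the minimality of $H$. The corner case $|V(H)|=2k+1$ is ruled out because the density requirement $|E(H)|>2k(k+1)$ would exceed the absolute maximum $\binom{2k+1}{2}=2k^2+k$.

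The main step will be to forbid vertex-cuts of size at most $k$. Suppose $S\subseteq V(H)$ with $|S|\leq k$ separates $V(H)\setminus S$ into nonempty parts $A$ and $B$. The minimum-degree lower bound just established forces $|A\cup S|,|B\cup S|\geq 2k+2$, so both induced subgraphs $H_1=H[A\cup S]$ and $H_2=H[B\cup S]$ satisfy the vertex-count requirement of $\mathcal{F}$ while having strictly fewer vertices than $H$. Minimality of $H$ then gives $|E(H_i)|\leq 2k(|V(H_i)|-k)$ for $i=1,2$. Summing these and using $|V(H_1)|+|V(H_2)|=|V(H)|+|S|\leq |V(H)|+k$ together with $|E(H_1)|+|E(H_2)|\geq |E(H)|$ (which holds since no edge of $H$ runs between $A$ and $B$) will yield $|E(H)|\leq 2k(|V(H)|-k)$, contradicting $H\in\mathcal{F}$.

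The only delicate point is the choice of the threshold $2k(|V(H)|-k)$: it is tuned so that the $4k$ average-degree hypothesis barely lands $G$ inside $\mathcal{F}$, while the separator estimate in the last step shoots the edge count down by exactly the $2k\cdot|S|\leq 2k\cdot k$ of slack needed to close the contradiction. Beyond this bookkeeping, the argument is a clean minimality deduction and no external tool is required.
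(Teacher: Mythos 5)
Your proof is correct. Note that the paper does not prove this claim at all: it is quoted as Mader's theorem and used as a black box with the citation \cite{M72}, so there is no internal proof to compare against. What you have written is essentially the classical argument (the one presented, e.g., in Diestel's \emph{Graph Theory}): take a subgraph that is minimal subject to a density condition of the form $|E(H)|>\gamma(|V(H)|-k)$, deduce minimum degree $>\gamma$ from minimality, and then rule out a separator $S$ with $|S|\leq k$ by splitting $H$ into $H[A\cup S]$ and $H[B\cup S]$, applying minimality to both and summing. Your bookkeeping is sound: average degree $4k$ indeed gives $|E(G)|\geq 2k|V(G)|>2k(|V(G)|-k)$ and $|V(G)|\geq 4k+1$, the corner case $|V(H)|=2k+1$ is correctly excluded since $\binom{2k+1}{2}=2k^2+k\leq 2k(k+1)$, the minimum-degree bound $\delta(H)\geq 2k+1$ forces $|A\cup S|,|B\cup S|\geq 2k+2$, and the separator computation $|E(H)|\leq 2k(|V(H_1)|+|V(H_2)|-2k)\leq 2k(|V(H)|-k)$ closes the contradiction; together with $|V(H)|\geq 2k+2\geq k+2$ this gives $(k+1)$-connectivity (the case of a disconnected $H$ being covered by $S=\emptyset$). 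The only difference from Mader's original statement is that you fix the threshold constant $2k$ rather than $\varepsilon(G)=|E(G)|/|V(G)|$; this simplification costs you the stronger conclusion that $H$ can be taken with $|E(H)|/|V(H)|>\varepsilon(G)-k$, but that extra density guarantee is not needed for the claim as stated or for its use in the paper (via Menger's theorem in the proof of Lemma 3.1).
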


\begin{proof}[Proof of Lemma \ref{lemma:admissible}]
Say that an edge $e\in E(G)$ is \emph{bad} if it is not $(p,\epsilon,k)$-admissible, and let $N$ be the number of bad edges. Let $U$ be a subset of $V(G)$ we get by selecting each vertex of $G$ independently with probability $p$. For every edge $e$, let $B_{e}$ be the event that both endpoints of $e$ are in $U$, and there are no $k$ internally vertex disjoint paths (other than the length one path) connecting the endpoints of $e$ in $G[U]$. If $e$ is bad, then $\mathbb{P}(B_{e}|e\subset U)\geq \epsilon$, so $\mathbb{P}(B_{e})\geq \epsilon p^{2}$. Let $X=\sum_{e\in E(G)}I(B_{e})$, where $I(B_e)$ is the indicator random variable of $B_{e}$. Then $X=|F|$, where $F$ is the set of edges in $G[U]$ for which there are no $k$ internally vertex disjoint paths connecting its endpoints in $G[U]$. We have $\mathbb{E}(X)=\sum_{e\in E(G)}\mathbb{P}(B_{e})\geq \epsilon p^{2}N$, so there exists a choice for $U$ for which $X\geq \epsilon p^{2}N$. Consider the subgraph $H$ of $G[U]$ with edge set $F$, then there are no $k$ internally vertex disjoint paths connecting the endpoints of any edge in $H$. By Menger's theorem \cite{M27}, $H$ cannot contain a $(k+1)$-vertex-connected subgraph. By Mader's theorem, this implies that $|F|=|E(H)|<2kn$. Therefore, we get $\epsilon p^{2}N<|F|<2kn$, which gives $N<\frac{2k}{\epsilon p^{2}}n$.
\end{proof}

\subsection{Admissible pairs of hyperedges}

Let $\mathcal{H}$ be a 3-uniform hypergraph. Say that a neighboring pair of edges $(e,f)$, where $e=xyz$ and $f=x'yz$, is $(p,\epsilon,k)$-admissible, if the edge $yz$ in the graph $\mathcal{H}_{x}\cap \mathcal{H}_{x'}$ is $(p,\epsilon,k)$-admissible. Also, for a positive integer $r$, say that $(e,f)$ is $(p,\epsilon,k,r)$-semi-admissible, if there exist at least $r$ edges $g=x''yz$ of $\mathcal{H}$ such that $(e,g)$ and $(g,f)$ are both $(p,\epsilon,k)$-admissible.

In this section, we prove the following lemma.

\begin{lemma}\label{lemma:3admissible}
Let $\epsilon,p\in[0,1]$, and let $k,r,n$ be positive integers. Let $\mathcal{H}$ be a 3-uniform hypergraph with $n$ vertices and at least $\frac{12r}{p}\sqrt{\frac{k}{\epsilon}}n^{5/2}$ edges. Then $E(\mathcal{H})$ contains a subset $F$ of at least $\frac{1}{2}|E(\mathcal{H})|$ edges such that any pair of neighboring edges in $F$ is $(p,\epsilon,k,r)$-semi-admissible in $\mathcal{H}$.
\end{lemma}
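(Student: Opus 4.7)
The plan is to construct $F$ by imposing a local ``goodness'' condition at each of the three pairs $\{y,z\}$, $\{x,z\}$, $\{x,y\}$ contained in an edge $xyz \in \mathcal{H}$. For each pair $\{y,z\}$, write $A_{yz} = \{x \in V(\mathcal{H}) : xyz \in \mathcal{H}\}$ and $d_{yz} = |A_{yz}|$, and define the auxiliary graph $G_{yz}$ on vertex set $A_{yz}$ whose edges are those pairs $\{x,x'\}$ for which $(xyz, x'yz)$ is $(p,\epsilon,k)$-admissible. The key observation is that $(xyz, x'yz)$ is $(p,\epsilon,k,r)$-semi-admissible if and only if $x$ and $x'$ have at least $r$ common neighbors in $G_{yz}$. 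Applying Lemma~\ref{lemma:admissible} to each link-intersection $\mathcal{H}_x \cap \mathcal{H}_{x'}$ and summing over unordered pairs $\{x,x'\}$ yields the global bound
\[
\sum_{\{y,z\}} |\bar{E}(G_{yz})| \;\le\; \binom{n}{2}\cdot \tfrac{2k}{p^2\epsilon}\cdot n \;\le\; \tfrac{\alpha n^3}{2},\qquad \text{where } \alpha := \tfrac{2k}{p^2\epsilon}.
\]

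Next, I would set $L := 2\sqrt{2\alpha n}$, assuming $L \ge 2(r+2)$ (the degenerate case $\alpha n < (r+2)^2/2$ is handled with $L := 2(r+2)$ by an analogous but easier computation). Call $\{y,z\}$ \emph{large} if $d_{yz} \ge L$, else \emph{small}. For a large pair $\{y,z\}$ and $x \in A_{yz}$, call $x$ \emph{good with respect to} $\{y,z\}$ if it has at most $T_{yz} := (d_{yz}-r-2)/2$ non-neighbors in $G_{yz}$. Let $F$ be the set of edges $xyz$ whose three pairs are all large and whose three vertices are good with respect to the corresponding pairs. If $e = xyz$ and $f = x'yz$ both lie in $F$, then both $x$ and $x'$ are good with respect to $\{y,z\}$, so they have at least $d_{yz} - 2 - 2T_{yz} = r$ common neighbors in $G_{yz}$, confirming that $(e,f)$ is $(p,\epsilon,k,r)$-semi-admissible.

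To bound $|F|$, note that an edge is removed only if one of its three pair-vertex incidences $(w,\{u,v\})$ is bad (either $\{u,v\}$ is small, or $w$ fails the goodness threshold at $\{u,v\}$), so it suffices to count bad incidences. The small-pair contribution is at most $\sum_{\{y,z\}\text{ small}} d_{yz} \le L\binom{n}{2} \le \sqrt{2\alpha}\,n^{5/2}$. For each large pair, the number of bad vertices is at most $2|\bar{E}(G_{yz})|/T_{yz} \le 8|\bar{E}(G_{yz})|/d_{yz}$ (since $d_{yz} \ge L \ge 2(r+2)$ forces $T_{yz} \ge d_{yz}/4$), and summing over large pairs while using the global bound gives at most $(8/L)\cdot \tfrac{\alpha n^3}{2} = \sqrt{2\alpha}\,n^{5/2}$. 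Hence at most $2\sqrt{2\alpha}\,n^{5/2}$ edges are removed, and the hypothesis $m \ge (12r/p)\sqrt{k/\epsilon}\,n^{5/2} = 6\sqrt{2}\,r\sqrt{\alpha}\,n^{5/2}$ then delivers $|F| \ge m/2$ for every integer $r \ge 1$. The principal obstacle I anticipate is not analytic but in the design of the goodness condition: a pairwise criterion would force one to extract a large clique from each $G_{yz}$, whereas the per-vertex threshold above converts the semi-admissibility condition into a sum of independent single-vertex constraints that can be controlled directly via Lemma~\ref{lemma:admissible}.
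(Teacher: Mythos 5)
Your proof is correct and follows the same basic strategy as the paper's: bound, for each pair $\{y,z\}$, the number of vertices in $N(y,z)$ that must be removed, using Lemma~\ref{lemma:admissible} to control the total number of non-admissible pairs, and then sum over pairs with a size threshold to get a total removal count of $O(r\alpha n^{5/2})$. The one genuine difference is the mechanism for defining which vertices of $N(y,z)$ are deleted: the paper's Claim~\ref{claim:2path} takes the $r$ highest-degree vertices in the admissibility graph and deletes everything outside their common neighborhood, which costs at most $2r|E(H_{y,z})|/n_{y,z}$; you instead delete vertices whose non-admissibility degree exceeds a threshold $T_{yz}=(d_{yz}-r-2)/2$, which costs at most $2|\bar E(G_{yz})|/T_{yz}\le 8|\bar E(G_{yz})|/d_{yz}$ once $d_{yz}\ge 2(r+2)$. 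Both constructions guarantee that any two surviving vertices have at least $r$ common admissibility-neighbors. Your variant has the mild advantage that the cost per pair does not carry a factor of $r$ (it is absorbed into the threshold for ``large'' pairs), and the mild cost of needing the explicit small/large split plus a degenerate case; the paper's version folds the threshold $\alpha n^{1/2}$ directly into the summation and keeps the argument slightly more uniform. The arithmetic in your version checks out: with $\alpha=2k/(p^2\epsilon)$ and $L=2\sqrt{2\alpha n}$, each of the two contributions is at most $\sqrt{2\alpha}\,n^{5/2}$, and the hypothesis gives $m\ge 6\sqrt{2}\,r\sqrt{\alpha}\,n^{5/2}\ge 4\sqrt{2\alpha}\,n^{5/2}$, so $|F|\ge m/2$.
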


Let us prepare the proof with a simple claim.

\begin{claim}\label{claim:2path}
Let $r$ be a positive integer. Let $G$ be a graph on $n$ vertices in which the number of non-edges is at most $q$. Then $V(G)$ contains a set $W$ of at least $n-\frac{2rq}{n}$ vertices such that any pair of vertices in $W$ is joined by at least $r$ paths of length two in $G$. 
\end{claim}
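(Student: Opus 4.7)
The plan is to let $W$ consist of the vertices of $G$ with few non-neighbors. Write $\bar G$ for the complement of $G$; then $|E(\bar G)| \le q$ and the handshake identity gives $\sum_{v} \deg_{\bar G}(v) \le 2q$. The key deterministic observation is that for any two vertices $u, v$, every vertex $w \ne u,v$ that fails to be a common $G$-neighbor of $u$ and $v$ must satisfy $uw \in E(\bar G)$ or $vw \in E(\bar G)$; consequently
\[
|N_G(u) \cap N_G(v)| \;\ge\; n - 2 - \deg_{\bar G}(u) - \deg_{\bar G}(v).
\]

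Setting the threshold $T := n/r$, I would define $W := \{v \in V(G): \deg_{\bar G}(v) \le T\}$. Markov's inequality on $\sum_v \deg_{\bar G}(v) \le 2q$ yields $|V(G) \setminus W| \le 2q/T = 2rq/n$, delivering the required size bound $|W| \ge n - \frac{2rq}{n}$. For any $u,v \in W$ the displayed inequality and the threshold choice combine to give $|N_G(u) \cap N_G(v)| \ge n - 2 - 2n/r$; each such common $G$-neighbor is the interior of a distinct length-two $uv$-path in $G$, so it suffices to see that $n - 2 - 2n/r \ge r$, after which the $r$ required paths are produced.

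The only real obstacle is verifying the arithmetic inequality $n - 2 - 2n/r \ge r$, equivalently $n(r-2) \ge r(r+2)$. This holds cleanly for $r \ge 3$ and $n \ge r(r+2)/(r-2)$, and the residual small-parameter cases are handled either by noting that the claim is vacuous once $2rq/n \ge n$ (so we may take $W = \emptyset$) or by shrinking the threshold slightly (for instance to $T = \lfloor (n-r-2)/2 \rfloor$) and doing a short numerical check that the resulting deletion count $2q/T$ is still at most $2rq/n$ in the relevant regime. I expect this to be entirely routine.
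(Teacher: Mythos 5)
Your approach is genuinely different from the paper's, and it contains a gap that your proposed patches do not close. The paper picks the $r$ vertices $v_1,\dots,v_r$ of largest $G$-degree (equivalently, smallest $\overline{G}$-degree) and takes $W$ to be their common $G$-neighborhood. For any $u,v\in W$, the walks $u,v_i,v$ for $i=1,\dots,r$ are already $r$ distinct length-two paths --- each $v_i$ is automatically outside $W$ because a vertex is not its own neighbor --- so there is no side condition on $n$ and $r$ to verify. The size bound follows from $n-|W|\le\sum_i|V(G)\setminus N(v_i)|$ together with the averaging fact that the $r$ smallest $\overline{G}$-degrees sum to at most $r$ times the average, hence at most $\frac{2rq}{n}$. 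Your filtering by an $\overline{G}$-degree threshold $T$ instead pays for \emph{both} endpoints in the inclusion--exclusion step $|N_G(u)\cap N_G(v)|\ge n-2-2T$, and that factor of two is precisely what ruins the small-$r$ cases.

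To guarantee $r$ common neighbors your threshold must satisfy $T\le(n-r-2)/2$, and even with the largest allowable threshold Markov only gives $|V\setminus W|\le\frac{4q}{n-r-2}$. Matching the required $\frac{2rq}{n}$ forces $4n\le 2r(n-r-2)$, i.e.\ $n(r-2)\ge r(r+2)$, which is unsatisfiable for $r=1$ and $r=2$ no matter how large $n$ is. So the deferred ``short numerical check'' is not routine --- it fails outright in exactly the cases you flagged. The vacuity fallback does not rescue them either: for $r=1$ and $G=K_{n/2,n/2}$, one has $2q/n\approx n/2$, well below $n$, so the claim is not vacuous and demands a $W$ of size roughly $n/2$ pairwise joined by two-step paths; but your threshold $T=n/r=n$ makes $W=V(G)$, where any adjacent cross-bipartition pair has no common neighbor at all. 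The hub construction handles $r=1$ by taking $W=N(v_1)$ for a maximum-degree vertex $v_1$, for which the property holds trivially, so I would replace the threshold filtering with it.
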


\begin{proof}
Let $v_{1},\dots,v_{r}\in V(G)$ be vertices in $G$ with the $r$ largest degrees, and let $W$ be the common neighborhood of $v_{1},\dots,v_{r}$. Then $n-|W|\leq \sum_{i=1}^{r}\mbox{deg}_{\overline{G}}(v_{i})\leq \frac{2rq}{n}.$ 
\end{proof}

\begin{proof}[Proof of Lemma \ref{lemma:3admissible}]
Let  $\{y,z\}$ be a pair of vertices in $V(\mathcal{H})$, and let $n_{y,z}=|N(y,z)|$.  Let $H_{y,z}$ be the graph on $N(y,z)$ in which $x$ and $x'$ are joined by an edge if the pair $(xyz,x'yz)$ is \emph{not} $(p,\epsilon,k)$-admissible. Also, let $H'_{y,z}$ be the graph on $N(y,z)$ in which $x$ and $x'$ are joined by an edge if $(xyz,x'yz)$ is \emph{not} $(p,\epsilon,k,r)$-semi-admissible.

Note that by Claim \ref{claim:2path}, one can delete a set $R_{y,z}$ of at most $2r|E(H_{y,z})|/n_{y,z}$ vertices of $H'_{y,z}$ to make  it an empty graph. Let $T_{y,z}=\{xyz:x\in R_{y,z}\}$ be the set of edges of $\mathcal{H}$ corresponding to the elements of $R_{y,z}$, and let $T=\bigcup_{y,z\in V(\mathcal{H})}T_{y,z}$. Then $F=E(\mathcal{H})\setminus T$ has the desired property. It remains to show that $|F|\geq \frac{1}{2}|E(\mathcal{H})|$, which will follow from the inequality $|T|\leq \frac{6r}{p}\sqrt{\frac{k}{\epsilon}}n^{5/2}$. For simplicity, write $\alpha=\frac{1}{p}\sqrt{\frac{k}{\epsilon}}$.

We have 
$$|T|\leq \sum_{y,z\in V(\mathcal{H}}|T_{y,z}|\leq 2r\sum_{y,z\in V(\mathcal{H})}\frac{|E(H_{y,z})|}{n_{y,z}}.$$
Note that if $n_{y,z}\leq \alpha n^{1/2}$, then $\frac{|E(H_{y,z})|}{n_{y,z}}<\alpha n^{1/2}$, so the contribution of such terms to the sum is at most $\alpha n^{5/2}$. Therefore, we have
$$|T|<2r\alpha n^{5/2}+\frac{2r}{\alpha}n^{-1/2}\sum_{y,z\in V(\mathcal{H})}|E(H_{y,z})|.$$

For a pair of vertices $\{x,x'\}$, let $G_{x,x'}$ be the subgraph of $\mathcal{H}_{x}\cap \mathcal{H}_{x'}$ formed by the not $(p,\epsilon,k)$-admissible edges of $\mathcal{H}_{x}\cap \mathcal{H}_{x'}$. Then $|E(G_{x,x'})|\leq \frac{2k}{p^{2}\epsilon}n=2\alpha^2 n$ by Lemma \ref{lemma:admissible}. Also, note that each edge of $G_{x,x'}$ corresponds to a pair of neighboring edges $(e,f)$ that is not $(p,\epsilon,k)$-admissible, so we have $$\sum_{y,z\in V(\mathcal{H})}|E(H_{y,z})|=\sum_{x,x'\in V(\mathcal{H})}|E(G_{x,x'})|\leq 2\alpha^{2}n^{3}.$$
But then $|T|\leq 6r\alpha n^{5/2},$ finishing the proof.
\end{proof}

\subsection{Topology}\label{section:topology}

This section contains all the topological notions and results we are going to use.  This section should be comprehensible with basic knowledge of topology.

Let us start with the classification theorem of closed surfaces. Recall that the Euler characteristic of a closed surface $\mathcal S$ is equal to $v-e+f$, where $v,e,f$ are the numbers of vertices, edges and faces, respectively, in a triangulation of $\mathcal S$. The Euler characteristic does not depend on the triangulation.  
\begin{theorem}[Classification theorem of closed surfaces]\label{thm:class}
Any connected closed surface is homeomorphic to a member of one of the following families:
\begin{enumerate}
    \item sphere with $g$ handles for $g\geq 0$ (which has Euler characteristic $2-2g$),
    \item sphere with $k$ cross-caps for $k\geq 1$ (which has Euler characteristic $2-k$). 
\end{enumerate}
The surfaces in the first family are the orientable surfaces.
\end{theorem}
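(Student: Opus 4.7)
The plan is to follow the classical combinatorial proof, reducing the classification to a normal form for polygonal schemata. First, invoke Rado's theorem, which guarantees that every closed surface $\mathcal{S}$ admits a triangulation. Fix such a triangulation with triangles $T_1,\dots,T_f$ and consider its dual graph, in which $T_i$ and $T_j$ are adjacent if they share an edge. Pick a spanning tree $\mathcal{T}$ of the dual and cut $\mathcal{S}$ along every edge of the triangulation whose dual is not in $\mathcal{T}$. Since a regular neighborhood of $\mathcal{T}$ in $\mathcal{S}$ is simply connected, the result is a single polygonal disk $D$ with $2f$ boundary edges, together with a pairing that identifies them two by two. In other words, we obtain a polygonal representation of $\mathcal{S}$: a cyclic word $w$ in letters $a_1^{\pm 1},\dots,a_n^{\pm 1}$ in which every letter appears exactly twice.

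Next, I would apply the standard normalization algorithm to $w$, performing a sequence of cut-and-paste operations each of which preserves the homeomorphism type of the quotient surface. The key moves are: (i) cancellation of an adjacent pair $xx^{-1}$ (provided the word has length greater than two); (ii) bringing together each same-orientation pair $\dots x\dots x\dots$ into an adjacent subword $xx$ (a \emph{cross-cap}); (iii) converting each interleaved opposite-orientation pair into a commutator block $xyx^{-1}y^{-1}$ (a \emph{handle}); and (iv) the crucial conversion lemma asserting that a handle together with a cross-cap is homeomorphic to three cross-caps. Iterating these moves reduces $w$ to one of three normal forms: $aa^{-1}$ (the sphere), $\prod_{i=1}^{g}a_ib_ia_i^{-1}b_i^{-1}$ (the sphere with $g$ handles), or $\prod_{i=1}^{k}a_ia_i$ (the sphere with $k$ cross-caps).

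To finish the classification, I would verify that the three normal forms yield pairwise non-homeomorphic surfaces by means of two invariants: orientability and the Euler characteristic. Orientability can be read off the word (the surface is orientable iff each letter appears once with each sign, i.e.\ no cross-cap blocks survive), and a direct vertex/edge/face count on the polygonal model gives Euler characteristic $2-2g$ for $g$ handles and $2-k$ for $k$ cross-caps. The orientable family is thus precisely the first one, as stated.

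The main obstacle is twofold. First, Rado's triangulation theorem is nontrivial and is customarily used as a black box; a complete proof requires careful point-set topology and an application of the Jordan curve theorem for surfaces. Second, the normalization procedure, while elementary, demands a careful case analysis to guarantee termination and completeness of the moves. The most delicate step is the handle-plus-cross-cap conversion in (iv): it is what forces non-orientable surfaces into a single parameter family and is the subtlest piece of the argument.
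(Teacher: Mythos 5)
The paper does not prove this theorem: it is stated as a classical background result (the classification of closed surfaces) and used as a black box, so there is no proof of the authors' to compare against. Your sketch is a correct outline of the standard combinatorial proof via Rad\'o's triangulation theorem, cutting along a spanning tree of the dual graph to obtain a polygonal schema, and then normalizing the edge word by cut-and-paste moves (including the handle-plus-cross-cap conversion, which is Dyck's theorem) before separating the normal forms by orientability and Euler characteristic. This is exactly the argument one would find in, e.g., Massey or Munkres, and it is the appropriate thing to cite here; a fully rigorous account would indeed require the two ingredients you flag as delicate, namely Rad\'o's theorem (which depends on the Jordan--Sch\"onflies theorem) and a careful termination argument for the normalization algorithm.
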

 
Now let us introduce our notion of cycle, which we will use to glue spheres along to get a triangulation of the torus.

\begin{definition}
A 3-uniform hypergraph $\mathcal{C}$ is called a \emph{topological cycle} if $\mathcal{C}$ is a triangulation of the cylinder $S^1\times [0,1]$, or the M\"obius strip such that each vertex of $\mathcal C$ lies on the boundary.
\end{definition}

 If $\mathcal{C}$ is a topological cycle, then its edges can be ordered cyclically such that consecutive edges are neighboring, that is, they share two common vertices. Call such an ordering \emph{proper} for $\mathcal{C}$. Note that a topological cycle with $r$ edges has $r$ vertices, and the link graph of every vertex is a path. Also, the Euler characteristic of a topological cycle is 0.

For example, tight cycles are topological cycles. For $r\geq 4$, a \emph{tight cycle of length $r$} is the 3-uniform hypergraph on vertices $x_{1},\dots,x_{r}$ with edges $x_{i-1}x_{i}x_{i+1}$ for $i=1,\dots,r$, where indices are meant modulo $r$. Indeed, a tight cycle of even length is a triangulation of the cylinder, while a tight cycle of odd length is a triangulation of the M\"obius strip,  see Figure \ref{fig:topcycle} for an illustration.  Another topological cycle of particular interest comes from double pyramids. Let $x,x',y_{1},\dots,y_{s}$ be the vertices of a double-pyramid, with edges $e_{i}=xy_{i}y_{i+1},f_{i}=x'y_{i}y_{i+1}$ for $i=1,\dots,s$, where $s\geq 4$. Then for any $3\leq r\leq s-1$, the sequence $e_{1},\dots,e_{r},f_{r},\dots,f_{s},f_{1}$ is a proper ordering of a topological cycle.

Say that a topological cycle $\mathcal{C}$ is \emph{torus-like}, if either 
\begin{itemize}
    \item $\mathcal{C}$ has even number of edges, and $\mathcal{C}$ is the triangulation of the cylinder, or
    \item $\mathcal{C}$ has odd number of edges, and $\mathcal{C}$ is the triangulation of the M\"obius strip.
\end{itemize}
If $\mathcal{C}$ is not torus-like, then say that $\mathcal{C}$ is \emph{Klein bottle-like}. Note that, for example, every tight cycle is torus-like.

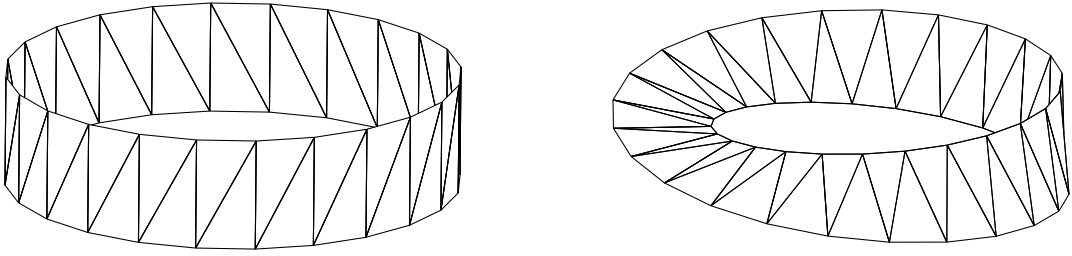
\begin{figure}
	\centering
\begin{tikzpicture}
\draw (-1.0125,-0.648825) -- (-1,0.783927) ;
\draw (-1,0.783927) -- (-1,0.783928) ;
\draw (-1,0.783928) -- (-1.17948,1.01842) ;
\draw (-1.12224,0.446908) -- (-1.17948,1.01842) ;
\draw (-1.17948,1.01842) -- (-1.18501,0.383991) ;
\draw (-1,0.783928) -- (-1,0.783927) ;
\draw (-1,0.783926) -- (-1.0125,-0.648825) ;
\draw (-1.12224,0.446908) -- (-1.17948,1.01842) ;
\draw (-1.17948,1.01842) -- (-1.12224,0.446908) ;
\draw (-1.18501,0.383991) -- (-1.17948,1.01842) ;
\draw (-1.17948,1.01842) -- (-1.55074,1.23234) ;
\draw (-1.34187,0.273656) -- (-1.55074,1.23234) ;
\draw (-1.55074,1.23234) -- (-1.56003,0.168092) ;
\draw (-1.17948,1.01842) -- (-1.18501,0.383991) ;
\draw (-1.34187,0.273656) -- (-1.55074,1.23234) ;
\draw (-1.55074,1.23234) -- (-1.34187,0.273656) ;
\draw (-1.56003,0.168093) -- (-1.55074,1.23234) ;
\draw (-1.55074,1.23234) -- (-2.0885,1.41109) ;
\draw (-1.66663,0.116799) -- (-2.0885,1.41109) ;
\draw (-2.0885,1.41109) -- (-2.10084,-0.00409573) ;
\draw (-1.55074,1.23234) -- (-1.56003,0.168092) ;
\draw (-1.66663,0.116799) -- (-2.0885,1.41109) ;
\draw (-2.0885,1.41109) -- (-1.66663,0.116799) ;
\draw (-2.10084,-0.00409484) -- (-2.0885,1.41109) ;
\draw (-2.0885,1.41109) -- (-2.75609,1.5425) ;
\draw (-2.10756,-0.00596791) -- (-2.75609,1.5425) ;
\draw (-2.75609,1.5425) -- (-2.76858,0.109743) ;
\draw (-2.0885,1.41109) -- (-2.10084,-0.00409573) ;
\draw (-2.10756,-0.00596791) -- (-2.75609,1.5425) ;
\draw (-2.75609,1.5425) -- (-2.10756,-0.00596791) ;
\draw (-2.13787,-0.0144052) -- (-2.76858,0.109743) ;
\draw (-2.76858,0.109743) -- (-2.75609,1.5425) ;
\draw (-2.75609,1.5425) -- (-3.50802,1.61761) ;
\draw (-2.76858,0.109743) -- (-3.50802,1.61761) ;
\draw (-3.50802,1.61761) -- (-3.52052,0.184852) ;
\draw (-2.75609,1.5425) -- (-2.76858,0.109743) ;
\draw (-2.76858,0.109743) -- (-3.50802,1.61761) ;
\draw (-3.50802,1.61761) -- (-2.76858,0.109743) ;
\draw (-2.76858,0.109743) -- (-3.52052,0.184852) ;
\draw (-3.52052,0.184852) -- (-3.50802,1.61761) ;
\draw (-3.50802,1.61761) -- (-4.29306,1.6313) ;
\draw (-3.52052,0.184852) -- (-4.29306,1.6313) ;
\draw (-4.29306,1.6313) -- (-4.30556,0.198544) ;
\draw (-3.50802,1.61761) -- (-3.52052,0.184852) ;
\draw (-3.52052,0.184852) -- (-4.29306,1.6313) ;
\draw (-4.29306,1.6313) -- (-3.52052,0.184852) ;
\draw (-3.52052,0.184852) -- (-4.30556,0.198544) ;
\draw (-4.30556,0.198544) -- (-4.29306,1.6313) ;
\draw (-4.29306,1.6313) -- (-5.0577,1.58264) ;
\draw (-4.30556,0.198544) -- (-5.0577,1.58264) ;
\draw (-5.0577,1.58264) -- (-5.0702,0.149885) ;
\draw (-4.29306,1.6313) -- (-4.30556,0.198544) ;
\draw (-4.30556,0.198544) -- (-5.0577,1.58264) ;
\draw (-5.0577,1.58264) -- (-4.30556,0.198544) ;
\draw (-4.30556,0.198544) -- (-5.0702,0.149885) ;
\draw (-5.0702,0.149885) -- (-5.0577,1.58264) ;
\draw (-5.0577,1.58264) -- (-5.74984,1.47494) ;
\draw (-5.0702,0.149885) -- (-5.74984,1.47494) ;
\draw (-5.74984,1.47494) -- (-5.76233,0.042192) ;
\draw (-5.0577,1.58264) -- (-5.0702,0.149885) ;
\draw (-5.0702,0.149885) -- (-5.74984,1.47494) ;
\draw (-5.74984,1.47494) -- (-5.0702,0.149885) ;
\draw (-5.0702,0.149885) -- (-5.76233,0.042192) ;
\draw (-5.76233,0.042192) -- (-5.74984,1.47494) ;
\draw (-5.74984,1.47494) -- (-6.3223,1.31556) ;
\draw (-5.76233,0.042192) -- (-6.3223,1.31556) ;
\draw (-6.3223,1.31556) -- (-6.33233,0.165701) ;
\draw (-5.74984,1.47494) -- (-5.76233,0.042192) ;
\draw (-5.76233,0.042192) -- (-6.3223,1.31556) ;
\draw (-6.3223,1.31556) -- (-5.76233,0.042192) ;
\draw (-5.76233,0.042192) -- (-5.86213,0.0144052) ;
\draw (-6.33233,0.165701) -- (-6.3223,1.31556) ;
\draw (-6.3223,1.31556) -- (-6.73607,1.11533) ;
\draw (-6.43853,0.201434) -- (-6.73607,1.11533) ;
\draw (-6.73607,1.11533) -- (-6.74252,0.376581) ;
\draw (-6.3223,1.31556) -- (-6.33233,0.165701) ;
\draw (-6.43853,0.201434) -- (-6.73607,1.11533) ;
\draw (-6.73607,1.11533) -- (-6.43853,0.201435) ;
\draw (-6.74252,0.376582) -- (-6.73607,1.11533) ;
\draw (-6.73607,1.11533) -- (-6.96296,0.887926) ;
\draw (-6.90013,0.534662) -- (-6.96296,0.887926) ;
\draw (-6.96296,0.887926) -- (-6.9653,0.619815) ;
\draw (-6.73607,1.11533) -- (-6.74252,0.376581) ;
\draw (-6.90013,0.534662) -- (-6.96296,0.887926) ;
\draw (-6.96296,0.887926) -- (-6.90013,0.534662) ;
\draw (-6.9653,0.619815) -- (-6.96296,0.887926) ;
\draw (-6.96296,0.887926) -- (-6.9875,0.648825) ;
\draw (-6.9875,0.648825) -- (-7,-0.783927) ;
\draw (-6.96296,0.887926) -- (-6.9653,0.619815) ;
\draw (-7,-0.783927) -- (-6.9875,0.648825) ;
\draw (-6.9875,0.648825) -- (-6.80803,0.414328) ;
\draw (-7,-0.783927) -- (-6.80803,0.414328) ;
\draw (-6.80803,0.414328) -- (-6.82052,-1.01842) ;
\draw (-6.9875,0.648825) -- (-7,-0.783927) ;
\draw (-7,-0.783927) -- (-6.80803,0.414328) ;
\draw (-6.80803,0.414328) -- (-7,-0.783927) ;
\draw (-7,-0.783927) -- (-6.82052,-1.01842) ;
\draw (-6.82052,-1.01842) -- (-6.80803,0.414328) ;
\draw (-6.80803,0.414328) -- (-6.43676,0.200416) ;
\draw (-6.82052,-1.01842) -- (-6.43676,0.200416) ;
\draw (-6.43676,0.200416) -- (-6.44926,-1.23234) ;
\draw (-6.80803,0.414328) -- (-6.82052,-1.01842) ;
\draw (-6.82052,-1.01842) -- (-6.43676,0.200416) ;
\draw (-6.43676,0.200416) -- (-6.82052,-1.01842) ;
\draw (-6.82052,-1.01842) -- (-6.44926,-1.23234) ;
\draw (-6.44926,-1.23234) -- (-6.43676,0.200416) ;
\draw (-6.43676,0.200416) -- (-5.89901,0.0216642) ;
\draw (-6.44926,-1.23234) -- (-5.89901,0.0216642) ;
\draw (-5.89901,0.0216642) -- (-5.91151,-1.41109) ;
\draw (-6.43676,0.200416) -- (-6.44926,-1.23234) ;
\draw (-6.44926,-1.23234) -- (-5.89901,0.0216642) ;
\draw (-5.89901,0.0216642) -- (-6.44926,-1.23234) ;
\draw (-6.44926,-1.23234) -- (-5.91151,-1.41109) ;
\draw (-5.91151,-1.41109) -- (-5.89901,0.0216642) ;
\draw (-5.89901,0.0216642) -- (-5.23142,-0.109743) ;
\draw (-5.91151,-1.41109) -- (-5.23142,-0.109743) ;
\draw (-5.23142,-0.109743) -- (-5.24392,-1.5425) ;
\draw (-5.89901,0.0216642) -- (-5.91151,-1.41109) ;
\draw (-5.91151,-1.41109) -- (-5.23142,-0.109743) ;
\draw (-5.23142,-0.109743) -- (-5.91151,-1.41109) ;
\draw (-5.91151,-1.41109) -- (-5.24392,-1.5425) ;
\draw (-5.24392,-1.5425) -- (-5.23142,-0.109743) ;
\draw (-5.23142,-0.109743) -- (-4.47948,-0.184852) ;
\draw (-5.24392,-1.5425) -- (-4.47948,-0.184852) ;
\draw (-4.47948,-0.184852) -- (-4.49198,-1.61761) ;
\draw (-5.23142,-0.109743) -- (-5.24392,-1.5425) ;
\draw (-5.24392,-1.5425) -- (-4.47948,-0.184852) ;
\draw (-4.47948,-0.184852) -- (-5.24392,-1.5425) ;
\draw (-5.24392,-1.5425) -- (-4.49198,-1.61761) ;
\draw (-4.49198,-1.61761) -- (-4.47948,-0.184852) ;
\draw (-4.47948,-0.184852) -- (-3.69444,-0.198544) ;
\draw (-4.49198,-1.61761) -- (-3.69444,-0.198544) ;
\draw (-3.69444,-0.198544) -- (-3.70694,-1.6313) ;
\draw (-4.47948,-0.184852) -- (-4.49198,-1.61761) ;
\draw (-4.49198,-1.61761) -- (-3.69444,-0.198544) ;
\draw (-3.69444,-0.198544) -- (-4.49198,-1.61761) ;
\draw (-4.49198,-1.61761) -- (-3.70694,-1.6313) ;
\draw (-3.70694,-1.6313) -- (-3.69444,-0.198544) ;
\draw (-3.69444,-0.198544) -- (-2.9298,-0.149885) ;
\draw (-3.70694,-1.6313) -- (-2.9298,-0.149885) ;
\draw (-2.9298,-0.149885) -- (-2.9423,-1.58264) ;
\draw (-3.69444,-0.198544) -- (-3.70694,-1.6313) ;
\draw (-3.70694,-1.6313) -- (-2.9298,-0.149885) ;
\draw (-2.9298,-0.149885) -- (-3.70694,-1.6313) ;
\draw (-3.70694,-1.6313) -- (-2.9423,-1.58264) ;
\draw (-2.9423,-1.58264) -- (-2.9298,-0.149885) ;
\draw (-2.9298,-0.149885) -- (-2.23767,-0.042192) ;
\draw (-2.9423,-1.58264) -- (-2.23767,-0.042192) ;
\draw (-2.23767,-0.042192) -- (-2.25016,-1.47494) ;
\draw (-2.9298,-0.149885) -- (-2.9423,-1.58264) ;
\draw (-2.9423,-1.58264) -- (-2.23767,-0.042192) ;
\draw (-2.23767,-0.042192) -- (-2.9423,-1.58264) ;
\draw (-2.9423,-1.58264) -- (-2.25016,-1.47494) ;
\draw (-2.25016,-1.47494) -- (-2.23767,-0.042192) ;
\draw (-2.23767,-0.042192) -- (-1.66521,0.117196) ;
\draw (-2.25016,-1.47494) -- (-1.66521,0.117196) ;
\draw (-1.66521,0.117196) -- (-1.6777,-1.31556) ;
\draw (-2.23767,-0.042192) -- (-2.25016,-1.47494) ;
\draw (-2.25016,-1.47494) -- (-1.66521,0.117196) ;
\draw (-1.66521,0.117196) -- (-2.25016,-1.47494) ;
\draw (-2.25016,-1.47494) -- (-1.6777,-1.31556) ;
\draw (-1.6777,-1.31556) -- (-1.66521,0.117196) ;
\draw (-1.66521,0.117196) -- (-1.25143,0.317416) ;
\draw (-1.6777,-1.31556) -- (-1.25143,0.317416) ;
\draw (-1.25143,0.317416) -- (-1.26393,-1.11534) ;
\draw (-1.66521,0.117196) -- (-1.6777,-1.31556) ;
\draw (-1.6777,-1.31556) -- (-1.25143,0.317416) ;
\draw (-1.25143,0.317416) -- (-1.6777,-1.31556) ;
\draw (-1.6777,-1.31556) -- (-1.26393,-1.11534) ;
\draw (-1.26393,-1.11534) -- (-1.25143,0.317416) ;
\draw (-1.25143,0.317416) -- (-1.02454,0.544826) ;
\draw (-1.26393,-1.11534) -- (-1.02454,0.544826) ;
\draw (-1.02454,0.544826) -- (-1.03704,-0.887926) ;
\draw (-1.25143,0.317416) -- (-1.26393,-1.11534) ;
\draw (-1.26393,-1.11534) -- (-1.02454,0.544826) ;
\draw (-1.02454,0.544826) -- (-1.26393,-1.11534) ;
\draw (-1.26393,-1.11534) -- (-1.03704,-0.887926) ;
\draw (-1.03704,-0.887926) -- (-1.02454,0.544826) ;
\draw (-1.02454,0.544826) -- (-1,0.783927) ;
\draw (-1.03704,-0.887926) -- (-1,0.783927) ;
\draw (-1,0.783927) -- (-1.0125,-0.648825) ;
\draw (-1.02454,0.544826) -- (-1.03704,-0.887926) ;
\draw (-1.03704,-0.887926) -- (-1,0.783927) ;
\draw (-1,0.783927) -- (-1.03704,-0.887926) ;
\draw (-1.03704,-0.887926) -- (-1.0125,-0.648825) ;
\draw (6.91189,0.69707) -- (6.75955,0.935535) ;
\draw (6.75955,0.935535) -- (6.74048,0.288655) ;
\draw (6.91189,0.69707) -- (6.86754,0.437425) ;
\draw (6.75955,0.935535) -- (6.42638,1.15611) ;
\draw (6.42638,1.15611) -- (6.36162,0.0352299) ;
\draw (6.75955,0.935535) -- (6.45679,0.0919694) ;
\draw (6.42638,1.15611) -- (5.92502,1.34117) ;
\draw (5.92502,1.34117) -- (5.86456,-0.0254425) ;
\draw (6.42638,1.15611) -- (5.86456,-0.0254425) ;
\draw (5.86456,-0.0254425) -- (6.02722,-0.0887141) ;
\draw (5.92502,1.34117) -- (5.28246,1.47393) ;
\draw (5.28246,1.47393) -- (5.31033,0.124293) ;
\draw (5.92502,1.34117) -- (5.31033,0.124293) ;
\draw (5.31033,0.124293) -- (5.86456,-0.0254425) ;
\draw (5.28246,1.47393) -- (4.53889,1.54007) ;
\draw (4.53889,1.54007) -- (4.72467,0.231438) ;
\draw (5.28246,1.47393) -- (4.72467,0.231438) ;
\draw (4.72467,0.231438) -- (5.31033,0.124293) ;
\draw (4.53889,1.54007) -- (3.74514,1.52917) ;
\draw (3.74514,1.52917) -- (4.1457,0.294923) ;
\draw (4.53889,1.54007) -- (4.1457,0.294923) ;
\draw (4.1457,0.294923) -- (4.72467,0.231438) ;
\draw (3.74514,1.52917) -- (2.95879,1.43599) ;
\draw (2.95879,1.43599) -- (3.60727,0.316678) ;
\draw (3.74514,1.52917) -- (3.60727,0.316678) ;
\draw (3.60727,0.316678) -- (4.1457,0.294923) ;
\draw (2.95879,1.43599) -- (2.23942,1.26128) ;
\draw (2.23942,1.26128) -- (3.13753,0.300984) ;
\draw (2.95879,1.43599) -- (3.13753,0.300984) ;
\draw (3.13753,0.300984) -- (3.60727,0.316678) ;
\draw (2.23942,1.26128) -- (1.64337,1.01204) ;
\draw (1.64337,1.01204) -- (2.75831,0.253788) ;
\draw (2.23942,1.26128) -- (2.75831,0.253788) ;
\draw (2.75831,0.253788) -- (3.13753,0.300984) ;
\draw (1.64337,1.01204) -- (1.21857,0.701229) ;
\draw (1.21857,0.701229) -- (2.48512,0.182065) ;
\draw (1.64337,1.01204) -- (2.48512,0.182065) ;
\draw (2.48512,0.182065) -- (2.75831,0.253788) ;
\draw (1.21857,0.701229) -- (1,0.346864) ;
\draw (1,0.346864) -- (2.32761,0.0933176) ;
\draw (1.21857,0.701229) -- (2.32761,0.0933176) ;
\draw (2.32761,0.0933176) -- (2.48512,0.182065) ;
\draw (1,0.346864) -- (1.0063,-0.0293872) ;
\draw (1.0063,-0.0293872) -- (2.29018,-0.00477523) ;
\draw (1,0.346864) -- (2.29018,-0.00477523) ;
\draw (2.29018,-0.00477523) -- (2.32761,0.0933176) ;
\draw (1.0063,-0.0293872) -- (1.23787,-0.403993) ;
\draw (1.23787,-0.403993) -- (2.37267,-0.104553) ;
\draw (1.0063,-0.0293872) -- (2.37267,-0.104553) ;
\draw (2.37267,-0.104553) -- (2.29018,-0.00477523) ;
\draw (1.23787,-0.403993) -- (1.67678,-0.753537) ;
\draw (1.67678,-0.753537) -- (2.57067,-0.198466) ;
\draw (1.23787,-0.403993) -- (2.57067,-0.198466) ;
\draw (2.57067,-0.198466) -- (2.37267,-0.104553) ;
\draw (1.67678,-0.753537) -- (2.28832,-1.05669) ;
\draw (2.28832,-1.05669) -- (2.87574,-0.279108) ;
\draw (1.67678,-0.753537) -- (2.87574,-0.279108) ;
\draw (2.87574,-0.279108) -- (2.57067,-0.198466) ;
\draw (2.28832,-1.05669) -- (3.02429,-1.29599) ;
\draw (3.02429,-1.29599) -- (3.27514,-0.339299) ;
\draw (2.28832,-1.05669) -- (3.27514,-0.339299) ;
\draw (3.27514,-0.339299) -- (2.87574,-0.279108) ;
\draw (3.02429,-1.29599) -- (3.82741,-1.45917) ;
\draw (3.82741,-1.45917) -- (3.75161,-0.372242) ;
\draw (3.02429,-1.29599) -- (3.75161,-0.372242) ;
\draw (3.75161,-0.372242) -- (3.27514,-0.339299) ;
\draw (3.82741,-1.45917) -- (4.63658,-1.54007) ;
\draw (4.63658,-1.54007) -- (4.28302,-0.371824) ;
\draw (3.82741,-1.45917) -- (4.28302,-0.371824) ;
\draw (4.28302,-0.371824) -- (3.75161,-0.372242) ;
\draw (4.63658,-1.54007) -- (5.39225,-1.53883) ;
\draw (5.39225,-1.53883) -- (4.84238,-0.333068) ;
\draw (4.63658,-1.54007) -- (4.84238,-0.333068) ;
\draw (4.84238,-0.333068) -- (4.28302,-0.371824) ;
\draw (5.39225,-1.53883) -- (6.04151,-1.46161) ;
\draw (6.04151,-1.46161) -- (5.39826,-0.252701) ;
\draw (5.39225,-1.53883) -- (5.39826,-0.252701) ;
\draw (5.39826,-0.252701) -- (4.84238,-0.333068) ;
\draw (6.04151,-1.46161) -- (6.54226,-1.31966) ;
\draw (6.54226,-1.31966) -- (5.91592,-0.129778) ;
\draw (6.04151,-1.46161) -- (5.91592,-0.129778) ;
\draw (5.91592,-0.129778) -- (5.39826,-0.252701) ;
\draw (6.54226,-1.31966) -- (6.86615,-1.12807) ;
\draw (6.86615,-1.12807) -- (6.3591,0.0337315) ;
\draw (6.54226,-1.31966) -- (6.3591,0.0337315) ;
\draw (6.3591,0.0337315) -- (5.91592,-0.129778) ;
\draw (6.86615,-1.12807) -- (7,-0.904183) ;
\draw (7,-0.904183) -- (6.69253,0.23251) ;
\draw (6.86615,-1.12807) -- (6.69253,0.23251) ;
\draw (6.69253,0.23251) -- (6.3591,0.0337315) ;
\draw (7,-0.904183) -- (6.88486,0.457702) ;
\draw (6.88486,0.457702) -- (6.69253,0.23251) ;
\draw (6.93161,-0.0952095) -- (6.91189,0.69707) ;
\draw (6.91189,0.69707) -- (6.88486,0.457702) ;
\draw (6.91189,0.69707) -- (6.86754,0.437425) ;
\draw (6.74048,0.288655) -- (6.75955,0.935535) ;
\draw (6.91189,0.69707) -- (6.93161,-0.0952095) ;
\draw (6.75955,0.935535) -- (6.45679,0.0919694) ;
\draw (6.36162,0.0352308) -- (6.42638,1.15611) ;
\draw (6.75955,0.935535) -- (6.74048,0.288655) ;
\draw (6.42638,1.15611) -- (5.86456,-0.0254425) ;
\draw (5.86456,-0.0254425) -- (5.92502,1.34117) ;
\draw (6.42638,1.15611) -- (6.36162,0.0352299) ;
\draw (6.02722,-0.0887141) -- (5.86456,-0.0254425) ;
\draw (5.92502,1.34117) -- (5.31033,0.124293) ;
\draw (5.31033,0.124293) -- (5.28246,1.47393) ;
\draw (5.92502,1.34117) -- (5.86456,-0.0254425) ;
\draw (5.86456,-0.0254425) -- (5.31033,0.124293) ;
\draw (5.28246,1.47393) -- (4.72467,0.231438) ;
\draw (4.72467,0.231438) -- (4.53889,1.54007) ;
\draw (5.28246,1.47393) -- (5.31033,0.124293) ;
\draw (5.31033,0.124293) -- (4.72467,0.231438) ;
\draw (4.53889,1.54007) -- (4.1457,0.294923) ;
\draw (4.1457,0.294923) -- (3.74514,1.52917) ;
\draw (4.53889,1.54007) -- (4.72467,0.231438) ;
\draw (4.72467,0.231438) -- (4.1457,0.294923) ;
\draw (3.74514,1.52917) -- (3.60727,0.316678) ;
\draw (3.60727,0.316678) -- (2.95879,1.43599) ;
\draw (3.74514,1.52917) -- (4.1457,0.294923) ;
\draw (4.1457,0.294923) -- (3.60727,0.316678) ;
\draw (2.95879,1.43599) -- (3.13753,0.300984) ;
\draw (3.13753,0.300984) -- (2.23942,1.26128) ;
\draw (2.95879,1.43599) -- (3.60727,0.316678) ;
\draw (3.60727,0.316678) -- (3.13753,0.300984) ;
\draw (2.23942,1.26128) -- (2.75831,0.253788) ;
\draw (2.75831,0.253788) -- (1.64337,1.01204) ;
\draw (2.23942,1.26128) -- (3.13753,0.300984) ;
\draw (3.13753,0.300984) -- (2.75831,0.253788) ;
\draw (1.64337,1.01204) -- (2.48512,0.182065) ;
\draw (2.48512,0.182065) -- (1.21857,0.701229) ;
\draw (1.64337,1.01204) -- (2.75831,0.253788) ;
\draw (2.75831,0.253788) -- (2.48512,0.182065) ;
\draw (1.21857,0.701229) -- (2.32761,0.0933176) ;
\draw (2.32761,0.0933176) -- (1,0.346864) ;
\draw (1.21857,0.701229) -- (2.48512,0.182065) ;
\draw (2.48512,0.182065) -- (2.32761,0.0933176) ;
\draw (1,0.346864) -- (2.29018,-0.00477523) ;
\draw (2.29018,-0.00477523) -- (1.0063,-0.0293872) ;
\draw (1,0.346864) -- (2.32761,0.0933176) ;
\draw (2.32761,0.0933176) -- (2.29018,-0.00477523) ;
\draw (1.0063,-0.0293872) -- (2.37267,-0.104553) ;
\draw (2.37267,-0.104553) -- (1.23787,-0.403993) ;
\draw (1.0063,-0.0293872) -- (2.29018,-0.00477523) ;
\draw (2.29018,-0.00477523) -- (2.37267,-0.104553) ;
\draw (1.23787,-0.403993) -- (2.57067,-0.198466) ;
\draw (2.57067,-0.198466) -- (1.67678,-0.753537) ;
\draw (1.23787,-0.403993) -- (2.37267,-0.104553) ;
\draw (2.37267,-0.104553) -- (2.57067,-0.198466) ;
\draw (1.67678,-0.753537) -- (2.87574,-0.279108) ;
\draw (2.87574,-0.279108) -- (2.28832,-1.05669) ;
\draw (1.67678,-0.753537) -- (2.57067,-0.198466) ;
\draw (2.57067,-0.198466) -- (2.87574,-0.279108) ;
\draw (2.28832,-1.05669) -- (3.27514,-0.339299) ;
\draw (3.27514,-0.339299) -- (3.02429,-1.29599) ;
\draw (2.28832,-1.05669) -- (2.87574,-0.279108) ;
\draw (2.87574,-0.279108) -- (3.27514,-0.339299) ;
\draw (3.02429,-1.29599) -- (3.75161,-0.372242) ;
\draw (3.75161,-0.372242) -- (3.82741,-1.45917) ;
\draw (3.02429,-1.29599) -- (3.27514,-0.339299) ;
\draw (3.27514,-0.339299) -- (3.75161,-0.372242) ;
\draw (3.82741,-1.45917) -- (4.28302,-0.371824) ;
\draw (4.28302,-0.371824) -- (4.63658,-1.54007) ;
\draw (3.82741,-1.45917) -- (3.75161,-0.372242) ;
\draw (3.75161,-0.372242) -- (4.28302,-0.371824) ;
\draw (4.63658,-1.54007) -- (4.84238,-0.333068) ;
\draw (4.84238,-0.333068) -- (5.39225,-1.53883) ;
\draw (4.63658,-1.54007) -- (4.28302,-0.371824) ;
\draw (4.28302,-0.371824) -- (4.84238,-0.333068) ;
\draw (5.39225,-1.53883) -- (5.39826,-0.252701) ;
\draw (5.39826,-0.252701) -- (6.04151,-1.46161) ;
\draw (5.39225,-1.53883) -- (4.84238,-0.333068) ;
\draw (4.84238,-0.333068) -- (5.39826,-0.252701) ;
\draw (6.04151,-1.46161) -- (5.91592,-0.129778) ;
\draw (5.91592,-0.129778) -- (6.54226,-1.31966) ;
\draw (6.04151,-1.46161) -- (5.39826,-0.252701) ;
\draw (5.39826,-0.252701) -- (5.91592,-0.129778) ;
\draw (6.54226,-1.31966) -- (6.3591,0.0337315) ;
\draw (6.3591,0.0337315) -- (6.86615,-1.12807) ;
\draw (6.54226,-1.31966) -- (5.91592,-0.129778) ;
\draw (5.91592,-0.129778) -- (6.3591,0.0337315) ;
\draw (6.86615,-1.12807) -- (6.69253,0.23251) ;
\draw (6.69253,0.23251) -- (7,-0.904183) ;
\draw (6.86615,-1.12807) -- (6.3591,0.0337315) ;
\draw (6.3591,0.0337315) -- (6.69253,0.23251) ;
\draw (7,-0.904183) -- (6.88486,0.457702) ;
\draw (7,-0.904183) -- (6.69253,0.23251) ;
\draw (6.69253,0.23251) -- (6.88486,0.457702) ;
\draw (6.88486,0.457702) -- (6.91189,0.69707) ;
\end{tikzpicture}
\caption{A tight cycle of length 48 (left) and a tight cycle of length 49 (right).}
\label{fig:topcycle}
\end{figure}

The next lemma tells us that if we glue spheres along a topological cycle, then we either get a torus or a Klein bottle (which is the sphere with two cross-caps).

\begin{lemma}\label{lemma:toplogy}
Let $\mathcal{C}$ be a topological cycle, and let $e_{1},\dots,e_{r}$ be a proper ordering of the edges. Let 3-uniform hypergraphs $\mathcal{S}_{1},\dots,\mathcal{S}_{r}$ be triangulations of the sphere such that $e_i,e_{i+1}\in E(\mathcal{S}_{i})$ for $i=1,\dots,r$, where indices are meant modulo $r$, and $\mathcal{S}_{i}$ has no vertex common with $\mathcal{C}$ and $\mathcal{S}_{j}$ for $j\neq i$, with the possible exception of the vertices in $e_i\cup e_{i+1}$. Let $\mathcal{T}$ be the hypergraph with edge set $\bigcup_{i=1}^{r} (E(\mathcal{S}_{i})\setminus \{e_{i},e_{i+1}\})$. If $\mathcal{C}$ is torus-like, then $\mathcal{T}$ is homeomorphic to the torus, and if $\mathcal{C}$ is Klein bottle-like, then $\mathcal{T}$ is homeomorphic to the Klein bottle.
\end{lemma}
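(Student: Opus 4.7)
The plan is to show that $\mathcal{T}$ is a closed triangulated surface with $\chi(\mathcal{T})=0$, determine its orientability in terms of the topological type of $\mathcal{C}$, and invoke the classification theorem to distinguish the torus from the Klein bottle.

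First I would check that $\mathcal{T}$ is a triangulated closed $2$-manifold. Each piece $\tilde{\mathcal{S}}_i := \mathcal{S}_i \setminus \{e_i, e_{i+1}\}$ is a sphere with two open triangles removed, hence an annulus with boundary circles $\partial e_i$ and $\partial e_{i+1}$, and $\mathcal{T}$ is obtained by cyclically gluing these $r$ annuli along the boundaries: $\tilde{\mathcal{S}}_i$ meets $\tilde{\mathcal{S}}_{i+1}$ along $\partial e_{i+1}$ via the identity on the three common vertices. A case analysis on $1$-simplices---using the vertex-disjointness hypothesis, the fact that $\mathcal{C}$ is a $2$-manifold (so every $1$-simplex of $\mathcal{C}$ lies in at most two of the $e_j$'s), and that each vertex link in $\mathcal{C}$ is a path---confirms that every $1$-simplex of $\mathcal{T}$ is contained in exactly two triangles and every vertex link in $\mathcal{T}$ is a circle. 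Inclusion-exclusion then yields $\chi(\mathcal{T}) = 0$, since each $\tilde{\mathcal{S}}_i$ has $\chi=0$ and each pairwise intersection reduces to the triangular boundary $\partial e_{i+1}$, which also has $\chi=0$.

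The crux is orientability. Fix arbitrary reference cyclic orders $\omega_i^{\ast}$ on each $e_i = \{a,b,c\}$ and define $\mu_i \in \{\pm 1\}$ to be $+1$ if $\omega_i^{\ast}$ and $\omega_{i+1}^{\ast}$ induce opposite orientations on the shared $1$-simplex $e_i \cap e_{i+1}$, and $-1$ otherwise. The quantity $\prod_i \mu_i$ is independent of the reference choice, and the standard orientability invariant for triangulated surfaces shows that it equals $+1$ if $\mathcal{C}$ is a cylinder and $-1$ if $\mathcal{C}$ is a M\"obius strip. For each sphere $\mathcal{S}_i$, a choice of orientation induces cyclic orders $\eta_i \omega_i^{\ast}$ on $e_i$ and $\tau_i \omega_{i+1}^{\ast}$ on $e_{i+1}$ with $\eta_i, \tau_i \in \{\pm 1\}$; orientability of $\mathcal{S}_i$ forces $\eta_i \tau_i = \mu_i$. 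The cylinders $\tilde{\mathcal{S}}_i$ and $\tilde{\mathcal{S}}_{i+1}$ glue to an orientable surface along $\partial e_{i+1}$ iff their induced boundary orientations on $\partial e_{i+1}$ are opposite, which after the standard boundary-of-a-$2$-simplex computation translates to $\eta_{i+1} = -\mu_i \eta_i$. Iterating around the cycle, this cyclic linear system admits a solution iff $\prod_i \mu_i = (-1)^r$, which is exactly the condition that $\mathcal{C}$ is torus-like.

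Combining these two computations with the classification of closed surfaces (Theorem~\ref{thm:class}), $\mathcal{T}$ is homeomorphic to the torus when $\mathcal{C}$ is torus-like and to the Klein bottle when $\mathcal{C}$ is Klein-bottle-like. The main obstacle is the sign bookkeeping in the orientability step: one must carefully track the boundary-orientation convention for a removed $2$-face inside an oriented sphere so that the various local sign relations between $\eta_i$, $\tau_i$, and $\mu_i$ collapse cleanly into the single global obstruction $(-1)^r \prod_i \mu_i$.
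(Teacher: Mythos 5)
Your overall strategy — verify $\mathcal{T}$ is a closed triangulated surface, compute $\chi(\mathcal{T})=0$, determine orientability via a sign product on $\mathcal{C}$, and invoke the classification theorem — is exactly the paper's. However, the proposal is built on an incorrect topological model of the pieces, and this leads to a genuine error in the Euler characteristic computation.

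The claim that $\tilde{\mathcal{S}}_i = \mathcal{S}_i\setminus\{e_i,e_{i+1}\}$ is an annulus with boundary circles $\partial e_i$ and $\partial e_{i+1}$ is false. Since $e_i$ and $e_{i+1}$ are consecutive in the proper ordering of the topological cycle, they share the two vertices of $e_i\cap e_{i+1}$, so they are two triangles glued along a common $1$-simplex. Removing both from the sphere gives a \emph{disc}, not an annulus — this is why the paper names these pieces $\mathcal{D}_i$. Moreover, the shared $1$-simplex $e_i\cap e_{i+1}$ drops out of the simplicial complex entirely (it is not a face of any remaining triangle of $\mathcal{S}_i$), so the boundary of $\mathcal{D}_i$ is a single $4$-cycle, not two $3$-cycles. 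Consequently $\chi(\mathcal{D}_i)=1$, not $0$, and the intersection $\mathcal{D}_i\cap\mathcal{D}_{i+1}$ is not the $3$-cycle $\partial e_{i+1}$ (that $3$-cycle is not even a subcomplex of either disc) but rather the three vertices of $e_{i+1}$ together with the one boundary $1$-simplex $s_{i+1}$, giving $\chi=2$. Your inclusion-exclusion, $r\cdot 0 - r\cdot 0 = 0$, therefore does not compute the right thing: with corrected values it reads $r\cdot 1 - r\cdot 2 = -r$, and one would additionally have to account for the triple and higher intersections through the $\mathcal{C}$-vertices, which you have not touched. The paper avoids this bookkeeping by computing $\chi(\mathcal{T})$ as: start from $\mathcal{C}$ (which has $\chi=0$), glue each disc $\mathcal{D}_i$ along its boundary $4$-cycle (each gluing raises $\chi$ by $\chi(\mathcal{D}_i)-\chi(\partial\mathcal{D}_i) = 1-0 = 1$), and finally delete the $r$ faces $e_1,\ldots,e_r$ (each lowering $\chi$ by $1$), giving $0 + r - r = 0$.

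The orientability analysis is sound in spirit and matches the paper's: your $\mu_i$ plays the role of the paper's $\pm\varepsilon_i$, the product $\prod_i\mu_i$ is intrinsic to $\mathcal{C}$ and equals $+1$ precisely for the cylinder, and the obstruction to orienting $\mathcal{T}$ reduces to the cyclic linear system whose solvability criterion is $\prod_i\mu_i = (-1)^r$, i.e.\ torus-like. But here too you phrase the gluing as ``cylinders $\tilde{\mathcal{S}}_i$ and $\tilde{\mathcal{S}}_{i+1}$ glue along $\partial e_{i+1}$,'' which rests on the incorrect annulus model; you would need to rewrite this step in terms of discs sharing a single boundary $1$-simplex $s_{i+1}$, as the paper does. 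Once the disc picture is adopted throughout, the sign algebra you set up goes through essentially unchanged.
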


\begin{proof}
For $i=1,\dots,r$, let $\mathcal{D}_{i}$ be the triangulation of the disc we get after removing $e_{i}$ and $e_{i+1}$ from $\mathcal{S}_{i}$. For $i=1,\dots, r$, denote by $s_i\subset e_i$ the 2-edge lying on the boundary of $\mathcal C$. Remark that the boundary of $\mathcal D_i$ contains $s_i$ and $s_{i+1}$. Put $\varepsilon_i=1$, if these two 2-edges share a common vertex, otherwise put  $\varepsilon_i=-1$. Informally, $\varepsilon_i=-1$ if the 2-edges $s_i$ and $s_{i+1}$ of the boundary of $\mathcal D_i$ belong to 'the locally opposite sides of $\mathcal C$' and $\varepsilon_i=-1$ if they belong to 'the same side of $\mathcal C$'. It is easy to see that if $\varepsilon_1\dots \varepsilon_r=1$, then $\mathcal C$ is a triangulation of the cylinder $S^1\times [0,1]$ (because in these case the boundary of $\mathcal C$ consists of two disconnected parts), otherwise $\mathcal C$ is a triangulation of the M\"obius strip (because $\mathcal C$ has only one connected parts). Therefore, $\varepsilon_1\dots \varepsilon_r=(-1)^r$ if and only if $\mathcal C$ is torus-like.

First, let us check that $\mathcal{T}$ is a simplicial manifold, that is, for every $v\in V(\mathcal T)$ the link graph of $v$ in $\mathcal T$ is a cycle (and thus $\mathcal{T}$ is homeomorphic to a closed surface). There are two possible cases:

\textit{Case 1.} Suppose that $v\not\in V(\mathcal C)$, and thus $v\in V(\mathcal S_i)\setminus (e_i\cup e_{i+1})$ for some $i\in [r]$. Since $\mathcal S_i$ is a triangulation of the sphere and $v$ does not belong to other triangulations $\mathcal S_j$, the link graph of $v$ in $\mathcal{T}$ is a cycle.

\textit{Case 2}. Suppose that $v\in V(\mathcal{C})$, and let $e_a=w_aw_{a+1}v, e_{a+1}=w_{a+1}w_{a+2}v, \dots, e_b=w_{b}w_{b+1}v$ be all the edges of $\mathcal{C}$ containing $v$. Let us describe the link graph $T_v$ of $v$ in $\mathcal{T}$. 

Since $v$ belongs to $V(\mathcal S_{a-1}), \dots, V(\mathcal S_{b})$ and does not belong to any other $V(\mathcal S_i)$, the link graph of $v$ in $\mathcal T$ naturally falls into several parts:
\begin{enumerate}
    \item The link graph of $v$ in $\mathcal D_{a-1}=\mathcal S_{a-1}\setminus \{e_{a-1},e_a\}$ is a simple path connecting $w_a$ and $w_{a+1}$ passing through vertices of $V(\mathcal S_{a-1})\setminus (e_{a-1}\cup e_a)$.
    
    \item The link graph of $v$ in $\mathcal S_i\setminus\{e_i, e_{i+1}\}$ for $i=a,\dots, b-1$ is a simple path connecting $w_i$ and $w_{i+2}$ and passing through vertices of $V(\mathcal S_{i})\setminus (e_i\cup e_{i+1})$.
    
    \item The link graph of $v$ in $\mathcal S_{b}\setminus\{e_{b},e_{b+1}\}$ is a simple path connecting $w_b$ and $w_{b+1}$ passing through vertices of $V(\mathcal S_{b})\setminus (e_b\cup e_{b+1})$.
\end{enumerate}
From this observations, we conclude that the link graph of $v$ in $\mathcal T$ is a simple cycle; see Figure~\ref{fig:linkgraph}.

\begin{figure}
	\centering
\begin{tikzpicture}
\draw [line width=0.1pt,dashed] (-5.07,-3.37)-- (-3.69,-1.09);
\draw [line width=0.1pt,dashed] (-3.69,-1.09)-- (-1.47,-0.81);
\draw [line width=0.1pt,dashed] (-1.47,-0.81)-- (0.05,-1.25);
\draw [line width=0.1pt,dashed] (0.05,-1.25)-- (1.33,-2.81);
\draw [line width=0.1pt,dashed] (1.33,-2.81)-- (-1.57,-3.41);
\draw [line width=0.1pt,dashed] (-1.57,-3.41)-- (-5.07,-3.37);
\draw [line width=0.1pt,dashed] (-1.57,-3.41)-- (-3.69,-1.09);
\draw [line width=0.1pt,dashed] (-1.57,-3.41)-- (-1.47,-0.81);
\draw [line width=0.1pt,dashed] (-1.57,-3.41)-- (0.05,-1.25);
\node at (-1.57,-3.41) [below] {$v$};
\node at (-5.07,-3.37) [below] {$w_a$};
\node at (-3.69,-1.09) [above left] {$w_{a+1}$};
\node at  (0.05,-1.25) [above right] {$w_{b}$};
\node at  (1.33,-2.81) [below] {$w_{b+1}$};
\draw [shift={(-3.4327762123683816,-1.8610959513569632)},line width=0.3pt]  plot[domain=0.49165425657508394:3.8862268488850833,variable=\t]({1*2.226497958408714*cos(\t r)+0*2.226497958408714*sin(\t r)},{0*2.226497958408714*cos(\t r)+1*2.226497958408714*sin(\t r)});
\draw [shift={(-1.844259052480459,-1.7370553517307388)},line width=0.3pt]  plot[domain=0.25167024749169975:2.8044130495295976,variable=\t]({1*1.955873276455742*cos(\t r)+0*1.955873276455742*sin(\t r)},{0*1.955873276455742*cos(\t r)+1*1.955873276455742*sin(\t r)});
\draw [shift={(0.021866330390920598,-1.6813871374527114)},line width=0.3pt]  plot[domain=-0.7118586842213492:2.6129523658454996,variable=\t]({1*1.7277096663132092*cos(\t r)+0*1.7277096663132092*sin(\t r)},{0*1.7277096663132092*cos(\t r)+1*1.7277096663132092*sin(\t r)});
\draw [shift={(-3.7363922518159796,-2.6195520581113807)},line width=0.3pt]  plot[domain=1.5404750093257797:3.6541493279618464,variable=\t]({1*1.5302554491003513*cos(\t r)+0*1.5302554491003513*sin(\t r)},{0*1.5302554491003513*cos(\t r)+1*1.5302554491003513*sin(\t r)});
\draw [shift={(-0.049405204460966155,-2.636691449814126)},line width=0.3pt]  plot[domain=-0.12498515598369675:1.4992335762665745,variable=\t]({1*1.3902498234711378*cos(\t r)+0*1.3902498234711378*sin(\t r)},{0*1.3902498234711378*cos(\t r)+1*1.3902498234711378*sin(\t r)});
\end{tikzpicture}
\caption{The link graph of $v\in V(\mathcal C)$.}
\label{fig:linkgraph}
\end{figure}
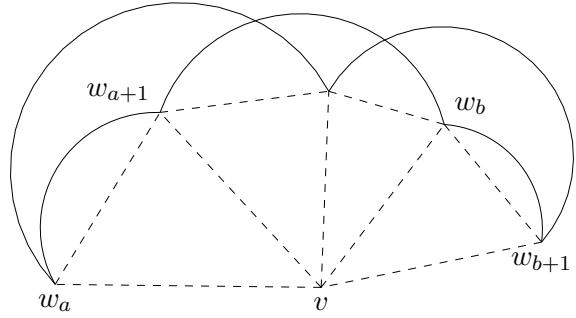

Second, let us study the orientability of $\mathcal T$. Recall that a simplicial $2$-manifold (or a triangulation)  is {\it orientable} iff we can orient the boundary of each triangle cyclically so that for any two triangles sharing an edge the orientations induced on the common edge are opposite. Remark that an orientation of the triangulation of $\mathcal D_i$ induces a cyclic orientation on the boundary of $D_i$, and in particular on $s_i$ and $s_{i+1}$. Inversely, an orientation of $s_i$ induces an orientation of the triangulation $\mathcal D_i$ of the disc. For a given orientation of $\mathcal D_i$, put $\delta_i=1$ if the orientation of $s_i=\{x,y\}$ induced by the orientation of $\mathcal D_i$ is $[x,y]$, where $x\in e_{i-1}$ and $y\in e_{i+1}$ (in this case, we say that  $s_i$ is {\it oriented forwards}), otherwise put $\delta_i=-1$ (and say that $s_i$ is {\it oriented backwards}).
For a given orientation of $\mathcal D_i\cup \mathcal D_{i+1}$, %(which is a triangulation of the cylinder)
 we have $\delta_{i+1}=\delta_i(-\varepsilon_i)$. Indeed, if $s_i$ and $s_{i+1}$ share a vertex, then within $\mathcal D_i$ either both are oriented forwards, or both are oriented backwards. But the orientation of $s_{i+1}$ within $\mathcal D_{i+1}$ is opposite, and thus is opposite to the orientation of $s_i$ within $\mathcal D_i$. If $s_i$ and $s_{i+1}$ do not share a vertex (and thus locally `lie on the opposite sides of $\mathcal S$'), then within $\mathcal D_i$ exactly one of them is oriented forwards, and we conclude in the same way as before.
%\textcolor{green}{Old version: For a given orientation of $\mathcal D_i$, put $\delta_i=1$ if the orientation of $s_i=\{x,y\}$ induced by the orientation of $\mathcal D_i$ is $[x,y]$, where $x\in e_{i-1}$ and $y\in e_{i+1}$, otherwise put $\delta_i=-1$.
%Remark that for a given orientation of the union of $\mathcal D_i$ and $\mathcal D_{i+1}$ (which is a triangulation of the disk), we have $\delta_{i+1}=\delta_i(-\varepsilon_i)$.}

Clearly, $\mathcal T$ is orientable if and only if for each $i=1,\ldots, r$ there exists an orientation $\eta_i$ of $\mathcal D_i\cup \mathcal D_{i+1}$ such that $\eta_i$ and $\eta_{i+1}$ induce the same orientation of $D_{i+1}$. From the above, we conclude that such a set $\eta_1,\ldots, \eta_r$ of orientations exist iff
\[
\delta_1=\delta_1(-\varepsilon_1)(-\varepsilon_2)\dots (-\varepsilon_r),
\]
that is, 
\[
\varepsilon_1\dots \varepsilon_r=(-1)^r,
\]
which corresponds to the case of the torus-like topological cycle $\mathcal C$.

% Second, let us study the orientability of $\mathcal{T}$. Fix some orientation for each edge $e_i$, then the (oriented) boundary of the triangle $e_i$ may be viewed as a homology class $c_i \in H_1(\mathcal{T})$. Let $\varepsilon_i$ be equal to $1$ if the induced orientations of the segment $e_i \cap e_{i+1}$ from $e_i$ and $e_{i+1}$ coincide, and $\varepsilon_i = -1$ otherwise. It is obvious that $\mathcal{C}$ is homeomorphic to the cylinder if and only if we have $\varepsilon_1 \ldots \varepsilon_m = 1$. Otherwise it is homeomorphic to the M\"obius strip.

% Observe that $c_i$ is homologous to $c_{i+1}$ in $H_1(\mathcal{T})$ if and only if $\varepsilon_i = -1$, otherwise $c_i \sim -c_{i+1}$. Indeed, in the first case $c_i + c_{i+1}$ is a boundary of the disc $\mathcal{D}_i$. We conclude that $c_{i+1} = - \varepsilon_i c_i$ in $H_1(\mathcal{T})$. So if $(-1)^m \varepsilon_1 \ldots \varepsilon_m = -1$, then $c_1 = -c_1$ and the surface is not orientable. Otherwise $H_1(\mathcal{T})$ is clearly orientable.

Lastly, we determine the Euler characteristic of $\mathcal{T}$. Clearly, $\mathcal{C}$ has Euler characteristic 0. Each disc $\mathcal{D}_{i}$ glued to $\mathcal{C}$ increases the Euler characteristic by 1, so after removing the $r$ edges $e_{1},\dots,e_{r}$, we end up with the Euler characteristic equal to $0$. Therefore, the Euler characteristic of $\mathcal{T}$ is $0$, which implies that $\mathcal{T}$ is homeomorphic to either the torus or the Klein bottle by the Classification theorem of closed surfaces. See Figure \ref{fig:torus} for an illustration of $\mathcal{T}$. 
\end{proof}

Somewhat surprisingly, it turns out that if $\mathcal{C}$ is a topological cycle that is also 3-partite, then $\mathcal{C}$ is always torus-like.

\begin{claim}\label{claim:3partite}
If $\mathcal{C}$ is a topological cycle and $\mathcal{C}$ is 3-partite, then $\mathcal{C}$ is torus-like.
\end{claim}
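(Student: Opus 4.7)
The plan is to analyse the dual graph of $\mathcal{C}$ and to use 3-partiteness to induce a canonical orientation on each triangle of $\mathcal{C}$.

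First, I establish the structure of the dual graph of $\mathcal{C}$, whose vertices are the triangles of $\mathcal{C}$ and whose edges are the pairs of triangles sharing a 2-edge. A quick Euler-characteristic computation shows that $\mathcal{C}$ has $r$ triangles, $r$ vertices, and $2r$ two-edges, of which $r$ are boundary 2-edges (each in a single triangle) and $r$ are interior (each in two triangles). So the dual has $r$ vertices and $r$ edges. Since a proper ordering $e_1,\dots,e_r$ exhibits a Hamilton cycle of length $r$ in the dual (using pairwise distinct interior 2-edges, as each 2-edge lies in at most two triangles), and there are only $r$ interior 2-edges, this Hamilton cycle uses all of them, so the dual graph coincides with the cycle $C_r$.

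Next, I use the 3-partite structure to put a canonical cyclic orientation on each triangle. Fixing a cyclic order $A\to B\to C\to A$ on the three color classes, each triangle $T$ inherits a canonical cyclic orientation from the colors of its vertices. Two adjacent triangles $T,T'$ share a 2-edge $\{u,v\}$ whose endpoints have two distinct colors, say $A$ and $B$; then both canonical orientations induce on $\{u,v\}$ the \emph{same} direction $u\to v$ (since the third vertex of either triangle is the one of color $C$). Therefore, an orientation of $\mathcal{C}$ as a surface is equivalent to a sign $\chi(T)\in\{\pm 1\}$ on each triangle (saying whether to use the canonical or the opposite orientation) such that $\chi(T)\chi(T')=-1$ for every adjacent pair $T,T'$; that is, a proper 2-coloring of the dual graph $C_r$. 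Hence $\mathcal{C}$ is orientable if and only if $C_r$ is bipartite, if and only if $r$ is even.

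To conclude: a topological cycle is a cylinder iff orientable and a M\"obius strip iff not. Combined with the above, a 3-partite topological cycle $\mathcal{C}$ is either the cylinder with $r$ even, or the M\"obius strip with $r$ odd, and in both cases $\mathcal{C}$ is torus-like by definition. The main non-routine step is the equivalence between orientability of $\mathcal{C}$ and bipartiteness of the dual in the 3-partite setting, which must be checked in both directions: from any orientation of $\mathcal{C}$ one reads off a sign function $\chi$ which is forced to be a proper 2-coloring of the dual, while conversely, any proper 2-coloring of the dual combined with the canonical orientations assembles to a globally consistent orientation of $\mathcal{C}$.
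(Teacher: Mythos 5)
Your proof is correct, and it takes a genuinely different route from the one in the paper. The paper's argument is a local sign-tracking computation: it relies on the auxiliary machinery set up in the proof of Lemma~\ref{lemma:toplogy} (the boundary 2-edges $s_i$ and the signs $\varepsilon_i$ recording whether $s_i$ and $s_{i+1}$ share a vertex), introduces a second sign $\sigma_i$ on $s_i$ defined directly from the 3-partition, verifies the recursion $\sigma_{i+1}=\sigma_i(-\varepsilon_i)$, and closes the loop to get $\varepsilon_1\cdots\varepsilon_r=(-1)^r$, which is the paper's working characterization of torus-like. Your argument instead reformulates everything in terms of orientability: you identify the dual graph of $\mathcal{C}$ as the cycle $C_r$, put the canonical orientation on each triangle coming from a fixed cyclic order of the three color classes, observe that adjacent triangles then induce the \emph{same} direction on their shared 2-edge (the key use of 3-partiteness), and so reduce orientability of $\mathcal{C}$ to 2-colorability of $C_r$, i.e.\ to the parity of $r$; combining with the fact that a topological cycle is a cylinder iff orientable gives the result. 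The two proofs rest on essentially the same local observation about the 3-coloring, but your framing via the dual cycle and canonical orientations is self-contained (it does not need the $s_i,\varepsilon_i$ bookkeeping from Lemma~\ref{lemma:toplogy}) and makes the parity origin of the statement more transparent; the paper's version has the advantage of slotting directly into the characterization of torus-like already established inside the proof of Lemma~\ref{lemma:toplogy}.
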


\begin{proof}
Let us use notation introduced in the proof of Lemma~\ref{lemma:toplogy}. 

Suppose that $V(\mathcal C)$ is partitioned into three sets $V_1$, $V_2$, and $V_3$. For $i=1,\dots,r $, put $\sigma_i=1$ if $s_i=\{x,y\}$, where  $x\in e_{i-1}\cap V_j$ and $y\in e_{i+1}\cap V_{j+1}$ for some $j\in\{1,2,3\}$, where index $j+1$ is meant modulo $3$. Otherwise, put $\sigma_{i}=-1$. It is easy to check that $\sigma_{i+1}=\sigma_i(-\varepsilon_i)$. As in the proof above, we have $\sigma_1 = \sigma_1(-\varepsilon_1)(-\varepsilon_2)\dots (-\varepsilon_r)$, which implies $\varepsilon_1\dots\varepsilon_r=(-1)^r$. That is, $\mathcal C$ is a torus-like topological cycle.
% Suppose that $\mathcal{C}$ is a triangulation of the cylinder, the other case can be handled similarly. Let $C_{1},C_{2}$ denote the two disjoint cycles forming the boundary of the cylinder. Color the vertices of $\mathcal{C}$ with red, blue and green such that each edge receives all 3 colors. Let $f_{1},\dots,f_{t}$ be the 2-edges in the skeleton of $\mathcal{C}$ which connect $C_{1}$ and $C_{2}$, and whose endpoints are red and blue. Say that $f_{i}$ is an \emph{up edge} if its blue endpoint is on $C_{1}$, otherwise $f_{i}$ is a \emph{down edge}. Without loss of generality, we can assume that $f_{1},\dots,f_{t}$ appear in this cyclic order in $\mathcal{C}$, which means that the strips between $f_{i}$ and $f_{i+1}$ for $i=1,\dots,t$ (indices are meant modulo $t$) partition the edge set of $\mathcal{C}$. Note that if $f_{i}$ and $f_{i+1}$ has the same direction, then the strip between them contains an even number of edges, otherwise it contains an odd number of edges. Indeed, every such strip contains exactly one green vertex $v$, whose link graph is path, and $f_{i}$ and $f_{i+1}$ are the first and last edges of this path. 
% As the number of direction changes in the cyclic sequence $f_{1},\dots,f_{t}$ is always even, we get that $\mathcal{C}$ must have an even number of edges.
\end{proof}

\begin{figure}
	\centering
\includegraphics[scale=0.5]{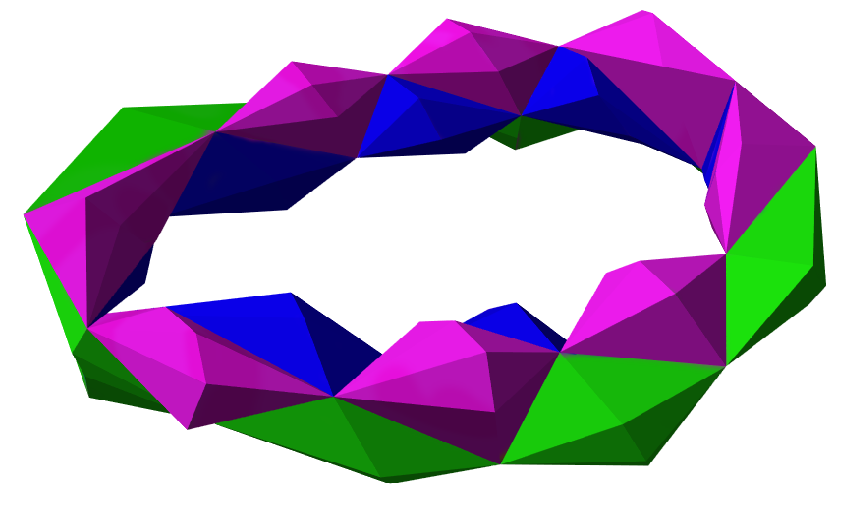}
\caption{An illustration of the torus we get after gluing spheres (more precisely double pyramids) to the neighboring edges of a tight cycle of length 24. We use the three colors to separate the spheres visually.}
\label{fig:torus}
\end{figure}

Now let us show how finding a topological cycle helps us building the triangulation of the torus.

\begin{lemma}\label{lemma:embedding_with_cyle}
Let $\mathcal{H}$ be a 3-uniform hypergraph, and let $\mathcal{C}$ be a topological cycle with proper edge ordering $e_1,\dots,e_{r}$. Suppose that $(e_i,e_{i+1})$ is $(\frac{1}{2r},\frac{1}{2r+1},2r,2r)$-semi-admissible in $\mathcal{H}$ for $i=1,\dots,r$. If $\mathcal{C}$ is torus-like, then $\mathcal{H}$ contains a torus, otherwise $\mathcal{H}$ contains a Klein bottle.
\end{lemma}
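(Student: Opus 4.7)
The plan has three phases. Write $p = 1/(2r)$, $\epsilon = 1/(2r+1)$, $k = 2r$, and for each $i$ set $\{y_i, z_i\} = e_i \cap e_{i+1}$, $c_i = e_i \setminus \{y_i, z_i\}$, $c'_i = e_{i+1} \setminus \{y_i, z_i\}$. The goal is to produce spheres $\mathcal{S}_1, \dots, \mathcal{S}_r$ meeting the hypotheses of Lemma~\ref{lemma:toplogy}; the torus (or Klein bottle) in $\mathcal{H}$ is then immediate. First I would greedily pick distinct intermediate apexes $x''_i \in X_i \setminus V(\mathcal{C})$, where $X_i$ is the set of at least $2r$ vertices $x''$ for which $(e_i, x''y_iz_i)$ and $(x''y_iz_i, e_{i+1})$ are both $(p, \epsilon, k)$-admissible (supplied by the $(p, \epsilon, k, 2r)$-semi-admissibility of $(e_i, e_{i+1})$): at step $i$ the forbidden set $V(\mathcal{C}) \cup \{x''_1, \dots, x''_{i-1}\}$ has size at most $r + (r-1) = 2r - 1 < |X_i|$, so a valid choice always exists.

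Second, I would partition $V(\mathcal{H}) \setminus (V(\mathcal{C}) \cup \{x''_1, \dots, x''_r\})$ uniformly at random into $2r$ disjoint classes $W_{(i, d)}$ indexed by $(i, d) \in [r] \times \{1, 2\}$, by assigning each vertex to a uniformly random class independently. Set $U_{(i, d)} = W_{(i, d)} \cup V(\mathcal{C}) \cup \{x''_1, \dots, x''_r\}$; marginally $U_{(i, d)}$ is distributed as a $p$-random subset of $V(\mathcal{H})$ conditioned on containing $V(\mathcal{C}) \cup \{x''_j\}_j$. Writing $G^{(i, 1)} = \mathcal{H}_{c_i} \cap \mathcal{H}_{x''_i}$ and $G^{(i, 2)} = \mathcal{H}_{x''_i} \cap \mathcal{H}_{c'_i}$, the event that $G^{(i, d)}[U_{(i, d)}]$ contains at least $k$ internally vertex disjoint $y_i$-$z_i$ paths is monotone increasing in $U_{(i, d)}$, so the FKG inequality transfers the admissibility of $y_iz_i$ in $G^{(i, d)}$ from the unconditional $p$-sample to this boosted conditional measure, giving probability at least $1 - \epsilon$ per event. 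A union bound over the $2r$ events produces joint success probability at least $1 - 2r\epsilon = 1/(2r+1) > 0$, so I may fix a partition for which every admissibility event succeeds.

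Third, I would extract clean paths and assemble the spheres. For each $(i, d)$ the ``forbidden'' internal vertices in $U_{(i, d)} \setminus (W_{(i, d)} \cup \{y_i, z_i\})$ lie in $(V(\mathcal{C}) \setminus \{y_i, z_i\}) \cup \{x''_j : j \neq i\}$ and number at most $(r-2) + (r-1) = 2r - 3$; since the $\geq 2r$ internally disjoint paths have pairwise disjoint interiors, at least three of them have all internal vertices inside $W_{(i, d)}$. Pick any such path $\pi_{(i, d)}$; the pairwise disjointness of the zones $W_{(i, d)}$ automatically makes the chosen paths' interiors pairwise disjoint across different $(i, d)$. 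For each $i$, the path $\pi_{(i, 1)}$ together with the direct edge $y_iz_i$ closes into the equator of a double pyramid $P_1^{(i)}$ with apexes $c_i, x''_i$ containing $e_i$, and $\pi_{(i, 2)}$ similarly gives a double pyramid $P_2^{(i)}$ with apexes $x''_i, c'_i$ containing $e_{i+1}$. Gluing $P_1^{(i)} \setminus \{x''_i y_i z_i\}$ and $P_2^{(i)} \setminus \{x''_i y_i z_i\}$ along their common triangular boundary $\{x''_i, y_i, z_i\}$ produces a triangulated sphere $\mathcal{S}_i$ containing both $e_i$ and $e_{i+1}$. The distinctness of the $x''_i$'s and the disjointness of the zones $W_{(i, d)}$ verify the hypotheses of Lemma~\ref{lemma:toplogy}.

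The hard part is reconciling admissibility, which is intrinsically probabilistic, with the deterministic disjointness demanded of the $\mathcal{S}_i$'s. The random partition into zones $W_{(i, d)}$ is the key device: each zone marginally behaves like an independent $p$-sample (so FKG plus admissibility deliver the internally disjoint paths), while the zones are themselves pairwise disjoint (so the sphere interiors automatically land in non-overlapping regions without further combinatorial argument). The parameters $k = 2r$ and fourth entry $2r$ of the semi-admissibility hypothesis are sized just to absorb the $\sim 2r$ boundary vertices in each clean-path extraction and the $\sim r$ previously chosen apexes in the greedy step.
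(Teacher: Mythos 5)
Your proposal is correct and follows essentially the same strategy as the paper: greedily select distinct intermediate apexes, randomly $2r$-color the remaining vertices (one color class per half-sphere), transfer the admissibility guarantee to the conditioned measure via monotonicity, union-bound over the $2r$ events, extract paths avoiding the $O(r)$ fixed vertices, assemble double pyramids into spheres, and invoke Lemma~\ref{lemma:toplogy}. The one place where you are more explicit than the paper is the FKG/stochastic-domination step transferring the admissibility bound from the $p$-sample conditioned on $\{y_i,z_i\}$ to the larger conditioning set $V(\mathcal C)\cup\{x_j''\}$; the paper uses this monotonicity silently, so your elaboration is a welcome clarification rather than a deviation.
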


\begin{proof}
 By the definition of a $(\frac{1}{2r},\frac{1}{2r+1},2r,2r)$-semi-admissible pair, for $i=1,\dots,r$, there exist at least $2r$ edges $f_{i}^1,\dots,f_{i}^{2r}$ such that $f_{i}^{j}\cap e_{i}=f_{i}^{j}\cap e_{i+1}=e_{i}\cap e_{i+1}$ for $j=1,\dots,2r$, and $(e_{i},f_i^j)$ and $(e_{i+1},f_i^j)$ are both $(\frac{1}{2r},\frac{1}{2r+1},2r)$-admissible. Hence, for $i=1,\dots,r$, we can choose an edge $f_{i}$ among $f_i^1,\dots,f_i^{2r}$ such that  $|f_{i}\cap V(\mathcal{C})|=2$ and, moreover, the vertices  $f_{1}\setminus V(\mathcal{C}),\dots,f_{r}\setminus V(\mathcal{C})$ are pairwise distinct.
 
 Let $X=V(\mathcal{C})\cup\bigcup_{i=1}^{r} f_{i}$ and note that $|X|=2r$. Color each vertex in $V(\mathcal{H})\setminus X$ randomly and independently with one of the $2r$ colors $c_1,\dots,c_r,c_1',\dots,c_{r}'$. For $i=1,\dots,r$, let $A_{i}$ be the event that there exists a sphere $\mathcal{S}_{i}$ containing $e_{i}$ and $e_{i+1}$ such that all vertices of $\mathcal{S}_{i}$, with the exception of $e_{i}\cup f_{i}\cup e_{i+1}$, are colored with $c_{i}$ or $c_{i}'$. In the next paragraph, we show that $\mathbb{P}(A_{i})\geq 1-\frac{2}{2r+1}$. Assuming this inequality, with positive probability there exists a coloring for which the events $A_{1},\dots,A_{r}$ hold simultaneously. Note that the spheres $\mathcal{S}_{i}$ are pairwise vertex disjoint outside of $\mathcal{C}$, so we can apply Lemma \ref{lemma:toplogy} to conclude that $\mathcal{H}$ contains a torus or a Klein bottle.
 
 Now let us show that $\mathbb{P}(A_{i})\geq 1-\frac{2}{2r+1}$. Fix some $i\in \{1,\dots,r\}$, let $e_{i}=xyz, e_{i+1}=x'yz$ and $f=x''yz$. Let $B_{i}^1$ be the event that there exists a double pyramid $\mathcal{S}_{i}^{1}$  such that $\mathcal{S}_{i}^{1}$ contains $e_{i}$ and $f_{i}$, and every vertex in $V(\mathcal{S}_{i}^1)\setminus (e_{i}\cup f_{i})$ is colored $c_{i}$. Similarly, let $B_{i}^2$ be the event that there exists a double pyramid $\mathcal{S}_{i}^{2}$ such that $\mathcal{S}_{i}^{2}$ contains $e_{i+1}$ and $f_{i}$, and every vertex in $V(\mathcal{S}_{i}^2)\setminus (e_{i+1}\cup f_{i})$ is colored $c_{i}'$. We show that $\mathbb{P}(B_{i}^1)\geq 1-\frac{1}{2r+1}$. %Suppose that we colored the vertices in $X\setminus \{y,z\}$ also with one of the $2r$ colors randomly, and we colored $y$ and $z$ with color $c_{i}$. 
 Let $U$ be the set of vertices colored $c_{i}$. As the edge $yz$ is $(\frac{1}{2r},\frac{1}{2r+1},2r)$-admissible in the graph $G=\mathcal{H}_{x}\cap\mathcal{H}_{x''}$, the probability that there are $2r$ internally vertex disjoint paths between $y$ and $z$ (other than the edge $yz$) 
 in $G[U\cup X]$ is at least $1-\frac{1}{2r+1}$. But then at least one of these paths, say $P$, does not contain a vertex of $X$. The union of the path $P$ with the edge $\{y,z\}$ forms a cycle in $\mathcal{H}_{x}\cap \mathcal{H}_{x'}$, which gives a double pyramid $\mathcal{S}_{i}^{1}$ in $\mathcal{H}$ with the desired properties.
 The proof of $\mathbb{P}(B_{i}^2)\geq 1-\frac{1}{2r+1}$ is analogous. But then $\mathbb{P}(B_{i}^1\cap B_{i}^2)\geq 1-\frac{2}{2r+1}$. But if both $B_{i}^1$ and $B_{i}^2$ occur then, taking $\mathcal{S}_{i}$ to be the union of $\mathcal{S}_{i}^1$ and $\mathcal{S}_{i}^2$ with $f_{i}$ removed, $\mathcal{S}_{i}$ is a sphere satisfying the desired properties. This concludes the proof.
\end{proof}

\subsection{The extremal number of the torus}

After these preparations, we are almost done with the proof of Theorem \ref{thm:torus}. The last piece we are missing is that we can find short torus-like topological cycles in hypergraphs with $n$ vertices and $\Omega(n^{5/2})$ edges. Recently, it was proved by Sudakov and Tomon \cite{ST20} that if $\mathcal{H}$ has $n$ vertices and $n^{2+o(1)}$ edges, then $\mathcal{H}$ contains a tight cycle of length $O((\log n)^2)$. Also, by a much simpler argument, we can find a double pyramid of size $O(\log n)$ if $\mathcal{H}$ has $\Omega(n^{5/2})$ edges, which in turn contains a torus-like topological cycle of length $O(\log n)$. Using a topological cycle of such length, one can deduce that Theorem \ref{thm:torus} holds with the bound $O(n^{5/2}(\log n)^{3})$ instead of $O(n^{5/2})$. However, in order to prove the required bound $O(n^{5/2})$, we need to find a topological cycle of constant length. Conlon \cite{C10} proposed the conjecture that there exists a constant $c>0$ such that the extremal number of the tight cycle of length $3k$ is at most $O(n^{2+c/k})$. An unpublished result of Verstra\"ete \cite{V} states that the extremal number of the tight cycle of length 24 is $O(n^{5/2})$, which would be perfect for our purposes. However, in order to not rely on an unpublished result, we prove the following more general theorem, which can be viewed as a relaxation of Conlon's conjecture.

\begin{theorem}\label{thm:cycles}
Let $k\geq 2$ be an integer. If $\mathcal{H}$ is a 3-uniform hypergraph on $n$ vertices which contains no torus-like topological cycle of length at most $6k$, then $\mathcal{H}$ has at most $n^{2+2/(k-1)+o(1)}$ edges.
\end{theorem}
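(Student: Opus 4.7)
My plan is to locate in $\mathcal{H}$ a short 3-partite tight cycle, which by Claim~\ref{claim:3partite} is automatically torus-like, contradicting the hypothesis.

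First, I would pass to a 3-partite subhypergraph via a uniformly random 3-coloring of $V(\mathcal{H})$: each edge is rainbow with probability $2/9$, yielding parts $A\sqcup B\sqcup C$ and $\mathcal{H}'\subseteq\mathcal{H}$ with $|E(\mathcal{H}')|\ge\tfrac{2}{9}|E(\mathcal{H})|$. I would then regularize codegrees: set $d := n^{2/(k-1)+\delta}$ for a small $\delta>0$ absorbed by the $o(1)$ in the hypothesis, and iteratively delete every cross-part pair $\{u,v\}$ of codegree less than $d$, together with the hyperedges through it. This removes at most $3n^2 d$ edges, negligible compared with $|E(\mathcal{H}')|$, so the resulting subhypergraph $\mathcal{H}''$ still has $n^{2+2/(k-1)+o(1)}$ edges and every active cross-part pair has codegree at least $d$.

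Next, I would work in the auxiliary digraph $D$ whose vertex set is the family of ordered cross-part pairs and whose arcs are $(u,v)\to(v,w)$ for each $uvw\in E(\mathcal{H}'')$; every active vertex has out- and in-degree at least $d$, and $|V(D)|\le 3n^2$. A closed walk of length $r$ in $D$ whose projected vertex sequence $v_0,\ldots,v_{r-1}$ consists of $r$ distinct vertices is exactly a tight cycle of length $r$ in $\mathcal{H}''$. I would run a bidirectional BFS in $D$ from an active starting pair, discarding at each step arcs whose head's new projection has already been used in the partial walk; since at most $O(k)$ projections are used, each level still branches by at least $d-O(k)\ge d/2$. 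After $\ell=(k-1)(1+o(1))$ steps, both the forward and backward BFS trees have size at least $(d/2)^\ell \ge n^{2}$ and must intersect. A simple averaging shows that one can choose the meeting vertex so that the forward and backward projections are disjoint outside the shared endpoints, yielding a closed walk in $D$ of length at most $2(k-1)+o(k)\le 6k$ with pairwise distinct projections, i.e., a tight cycle of length at most $6k$ in $\mathcal{H}''$. Being 3-partite as a hypergraph, this cycle is torus-like by Claim~\ref{claim:3partite}, giving the contradiction.

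I expect the main obstacle to be the \emph{distinct projections} requirement for the closed walk in $D$. A walk that is simple in $D$ (no repeated ordered pair) can still have a repeated vertex projection, and the usual Moore-bound shortcutting is unavailable because a coincidence of projections does not yield a coincidence of ordered pairs. The resolution is to impose the freshness rule during the BFS and then, when the forward and backward trees meet, to argue that a ``generic'' meeting vertex has no undesirable projection collisions between the two sides. Balancing the arithmetic --- codegree threshold $d$, walk length $\ell$, and the number of used projections --- uses up precisely the $o(1)$ slack in the exponent $2+2/(k-1)+o(1)$.
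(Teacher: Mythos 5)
Your overall architecture --- pass to a $3$-partite subhypergraph, regularize codegrees, find a short tight cycle, and invoke Claim~\ref{claim:3partite} to get a torus-like topological cycle --- matches the paper's, but the central step has a genuine gap. The bidirectional BFS argument asserts two things that are not established. First, $(d/2)^\ell$ counts walks from the root, not distinct ordered pairs reached; different walks collide heavily in $D$, so the "trees" need not have $\Omega(n^2)$ vertices, and even if each did, two subsets of size $n^2$ of a digraph on at most $3n^2$ vertices need not intersect. Second --- and this is the crux --- even at an intersection point, the forward walk and backward walk may share vertex projections outside the endpoints, and your "freshness rule" is applied along each path separately: it rules out a repeated projection \emph{within} one path but does nothing to prevent a collision \emph{between} the two halves. "A simple averaging shows that one can choose the meeting vertex so that the forward and backward projections are disjoint" is not an argument; there are hypergraphs in which the vast majority of pairs of walks from a fixed root do share a projection, so some structural counting is unavoidable.

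This is precisely where the paper spends its effort. Instead of BFS, it passes to the auxiliary graph $L(\mathcal{H}')$ with its natural $3$-set coloring, uses Lemma~\ref{lemma:balanced} to get codegrees bounded both \emph{below and above} (up to polylog factors), extracts a \emph{diverse} subgraph $L'$ of large average degree (Lemma~\ref{lemma:diverse}), and then applies a Janzer-style supersaturation bound (Lemma~\ref{lemma:badhom}) which controls the number of non-rainbow homomorphisms of $C_{2\ell}$ via a dyadic decomposition of walk counts and the maximum degree $\Delta$. Combined with Sidorenko's lower bound $\mathrm{hom}(C_{2k},G')\geq (2|E(G')|/|V(G')|)^{2k}$, this forces a rainbow $C_{2\ell}$ for some $\ell\leq k$, which Lemma~\ref{lemma:rainbow_to_topological} converts into a topological cycle of length at most $6\ell$. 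Note in particular that your minimum-codegree cleanup has no analogue of the upper codegree bound from Lemma~\ref{lemma:balanced}, which is an essential input to the supersaturation estimate; without it the $\Delta(G)$ term in Lemma~\ref{lemma:badhom} is uncontrolled. So the proposal identifies the right difficulty but defers exactly the part of the proof that is hard, and the deferred part is the content of Section~\ref{sect:cycles}.
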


As the proof of this theorem is a bit of a detour, we present it later in Section \ref{sect:cycles}. One might wonder whether it is also possible to find a short Klein bottle-like topological cycle in $\mathcal{H}$ if $E(\mathcal{H})=\Omega(|V(\mathcal{H})|^{5/2})$, as this would allow us to find a triangulation of the Klein bottle. Unfortunately, this is not the case. Indeed, the complete 3-partite 3-uniform hypergraph with vertex classes of size $n/3$ has $\Omega(n^{3})$ edges and contains no Klein bottle-like topological cycle by Claim \ref{claim:3partite}.
 
\begin{proof}[Proof of Theorem \ref{thm:torus}]
Let $r=36$, then by Theorem \ref{thm:cycles}, there exists a constant $c_1$ such that every 3-uniform hypergraph on $n$ vertices with at least $c_1 n^{5/2}$ edges contains a torus like topological cycle of length at most $r$. We show that $c=\max\{2c_1,24\cdot r^2(2r+1)\}$ suffices.
 
 Let $\mathcal{H}$ be a 3-uniform hypergraph with at least $cn^{5/2}$ edges. Then by Lemma \ref{lemma:3admissible}, $\mathcal{H}$ contains a set $F$ of at least $c_1 n^{5/2}$ edges such that any neighboring pair of edges in $F$ is $(\frac{1}{2r},\frac{1}{2r+1},2r,2r)$-semi-admissible in $\mathcal{H}$. Also,  $F$ contains a torus-like topological cycle of length at most $r$. But then Lemma \ref{lemma:embedding_with_cyle} tells us that $\mathcal{H}$ contains a triangulation of the torus.
\end{proof} 

\section{Orientable surfaces --- Upper bound}\label{sect:surface}

In this section, we prove Theorem \ref{thm:orientable}. If $\mathcal{S}$ is a closed orientable surface of genus $g$, then $\mathcal{S}$ is homeomorphic to $g$ copies of the torus glued to each other one-by-one. We show the more general result that if we glue two hypergraphs $\mathcal{T}$ and $\mathcal{T}'$ together, then the extremal number of the resulting hypergraph is $O(\mbox{ex}(n,\mathcal{T})+\mbox{ex}(n,\mathcal{T}'))$. This might be of independent interest.

More precisely, let $\mathcal{T}$ and $\mathcal{T}'$ be two $r$-uniform hypergraphs. Define $\mathcal{T}\oplus\mathcal{T}'$ to be the family of hypergraphs we get after gluing some edge $e\in\mathcal{T}$ to some edge $e'\in\mathcal{T}'$. Also, given two families $\mathcal{F}$ and $\mathcal{F}'$ of $r$-uniform hypergraphs, let $$\mathcal{F}\oplus\mathcal{F}'=\bigcup_{\mathcal{T}\in \mathcal{F},\mathcal{T}'\in\mathcal{F}}\mathcal{T}\oplus\mathcal{T}'.$$

\begin{lemma}\label{lemma:gluing}
Let $\mathcal{F}$ and $\mathcal{F}'$ be two families of $r$-uniform hypergraphs. Then 
$$\mbox{ex}(n,\mathcal{F}\oplus\mathcal{F}')\leq 2^{r+1}(\mbox{ex}(n,\mathcal{F})+\mbox{ex}(n,\mathcal{F}')).$$
\end{lemma}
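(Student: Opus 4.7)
The plan is to enumerate pairs $(e, R)$ consisting of an edge $e \in E(\mathcal{H})$ and a subset $R \subseteq V(\mathcal{H})$ with $e \subseteq R$, and show by a simple counting argument that for some such pair the induced subhypergraphs $\mathcal{H}[R]$ and $\mathcal{H}[B]$ (with $B := (V(\mathcal{H}) \setminus R) \cup e$) \emph{simultaneously} contain an $\mathcal{F}$-copy and an $\mathcal{F}'$-copy using $e$. Since $R \cap B = e$ by design, such a pair of copies automatically meets only in $e$, yielding the desired member of $\mathcal{F} \oplus \mathcal{F}'$ in $\mathcal{H}$.

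The first step is a small auxiliary fact: for any $r$-uniform hypergraph $\mathcal{G}$ on at most $n$ vertices and any family $\mathcal{F}$, the set of edges of $\mathcal{G}$ not contained in any $\mathcal{F}$-copy within $\mathcal{G}$ forms an $\mathcal{F}$-free subhypergraph of $\mathcal{G}$. (A putative $\mathcal{F}$-copy sitting inside these edges would itself be an $\mathcal{F}$-copy in $\mathcal{G}$, contradicting their defining property.) Hence the number of such edges is at most $\mbox{ex}(n, \mathcal{F})$.

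The second step is the main computation. Let $G_1(e, R) = 1$ if $e$ lies in some $\mathcal{F}$-copy of $\mathcal{H}[R]$ and $G_2(e, R) = 1$ if $e$ lies in some $\mathcal{F}'$-copy of $\mathcal{H}[B]$. By the auxiliary fact applied to $\mathcal{H}[R]$ for each fixed $R \subseteq V(\mathcal{H})$, at most $m := \mbox{ex}(n, \mathcal{F})$ edges of $\mathcal{H}[R]$ have $G_1 = 0$; summing over $R$ and using the identity $\sum_{R \subseteq V(\mathcal{H})} |E(\mathcal{H}[R])| = N \cdot 2^{n-r}$ (where $N = |E(\mathcal{H})|$, since each edge $e$ is in exactly $2^{n-r}$ subsets containing it) yields $\sum_{(e, R)} G_1 \geq N \cdot 2^{n-r} - m \cdot 2^n$. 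The analogous bound with $m' := \mbox{ex}(n, \mathcal{F}')$ holds for $\sum_{(e,R)} G_2$. Combining with the elementary inequality $G_1 G_2 \geq G_1 + G_2 - 1$ and subtracting the total pair count $N \cdot 2^{n-r}$ gives
$$\sum_{(e, R)} G_1 G_2 \;\geq\; 2^n\left(\frac{N}{2^r} - (m + m')\right),$$
which is strictly positive under the hypothesis $N > 2^{r+1}(m + m')$. Therefore some pair $(e, R)$ has $G_1(e, R) \cdot G_2(e, R) = 1$, producing the sought copy of $\mathcal{F} \oplus \mathcal{F}'$ in $\mathcal{H}$.

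The main conceptual hurdle is recognizing the right parametrization — namely, letting the edge $e$ live in \emph{both} $R$ and $B$ — as this is what automatically enforces vertex-disjointness of the two copies outside $e$. Once that setup is in place, everything reduces to an elementary inclusion-exclusion using the auxiliary fact.
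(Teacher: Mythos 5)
Your proof is correct. It is essentially the same underlying argument as the paper's, but you have collapsed the paper's two-step probabilistic structure into a single direct count, which is a genuine simplification. The paper first introduces the auxiliary notion of a $(\mathcal{F},p,\epsilon)$-rich edge, proves (Lemma~\ref{lemma:rich}) that few edges fail to be rich, pins down one edge $e$ that is simultaneously $(\mathcal{F},\tfrac12,\tfrac12)$-rich and $(\mathcal{F}',\tfrac12,\tfrac12)$-rich, and only then runs the random red/blue coloring of $V(\mathcal{H})\setminus e$. You instead sum $G_1 G_2$ over all pairs $(e,R)$ with $e\subseteq R$ at once; the auxiliary fact you state (edges of $\mathcal{G}$ lying in no $\mathcal{F}$-copy of $\mathcal{G}$ form an $\mathcal{F}$-free subhypergraph, hence at most $\mathrm{ex}(n,\mathcal{F})$ of them) is exactly the observation underlying Lemma~\ref{lemma:rich}, and the identity $\sum_R|E(\mathcal{H}[R])|=|E(\mathcal{H})|\cdot 2^{n-r}$ plays the role of the probability computation. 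The reindexing from $R$ to $B=(V\setminus R)\cup e$ being a bijection on pairs $(e,\cdot)$ with $e$ contained in the second coordinate is the point that makes the $G_2$ count symmetric to the $G_1$ count, and you use it correctly. One small bonus of your version: the final inequality $\sum G_1G_2\geq 2^n\bigl(N/2^r-(m+m')\bigr)$ already gives positivity as soon as $N>2^r(m+m')$, so you actually prove $\mathrm{ex}(n,\mathcal{F}\oplus\mathcal{F}')\leq 2^r(\mathrm{ex}(n,\mathcal{F})+\mathrm{ex}(n,\mathcal{F}'))$, improving the constant in the lemma by a factor of two.
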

 
The proof of this lemma builds on similar ideas as the ones presented in Section \ref{sect:admissibleedges}. We prepare the proof with a lemma akin to Lemma \ref{lemma:admissible}.
    
Let $\mathcal{F}$ be a family of $r$-uniform hypergraphs, and let $\mathcal{H}$ be an $r$-uniform hypergraph. For $\epsilon,p\in (0,1]$, say that an edge $e\in E(\mathcal{H})$ is \emph{$(\mathcal{F},p,\epsilon)$-rich} if the following holds. Select each vertex of $\mathcal{H}$ independently with probability $p$, and let $U$ be the set of selected vertices. Let $A$ be the event that $\mathcal{H}[U]$ contains a copy of a member of $\mathcal{F}$ which contains the edge $e$. Then $e$ is \emph{$(\mathcal{F},p,\epsilon)$-rich} if $\mathbb{P}(A| e\subset U)> 1-\epsilon$.

\begin{lemma}\label{lemma:rich}
Let $\mathcal{F}$ be a family of $r$-uniform hypergraphs, let $\mathcal{H}$ be an $r$-uniform hypergraph on $n$ vertices, and let $p,\epsilon\in (0,1]$. Then at most $\frac{\mbox{ex}(n,\mathcal{F})}{\epsilon p^{r}}$ edges of $\mathcal{H}$ are not $(\mathcal{F},p,\epsilon)$-rich.
\end{lemma}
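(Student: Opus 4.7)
The plan is to mirror closely the argument of Lemma \ref{lemma:admissible}: sample a random subset $U\subseteq V(\mathcal H)$, use the failure of richness to produce many bad events simultaneously, and then invoke the definition of $\mbox{ex}(n,\mathcal F)$ on the resulting witness. The mechanism is essentially the same first-moment / extremal-witness trick, but now the forbidden configuration is supplied by $\mathcal F$ itself rather than by Mader's theorem.

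Concretely, let $N$ denote the number of edges of $\mathcal H$ that are not $(\mathcal F,p,\epsilon)$-rich. Select each vertex of $\mathcal H$ independently with probability $p$ to obtain $U$, and for every edge $e\in E(\mathcal H)$ let $B_e$ be the event that $e\subseteq U$ and that $\mathcal H[U]$ contains no copy of a member of $\mathcal F$ having $e$ as an edge. By the definition of non-richness, every non-rich $e$ satisfies $\mathbb P(B_e\mid e\subseteq U)\ge \epsilon$, and thus $\mathbb P(B_e)\ge \epsilon p^{r}$. Setting $X=\sum_{e}\mathbf 1_{B_e}$, linearity of expectation gives $\mathbb E[X]\ge \epsilon p^{r}N$, so some realization of $U$ yields a set
\[
F=\{e\in E(\mathcal H):B_e\text{ occurs}\}
\]
with $|F|\ge \epsilon p^{r}N$, all of whose members lie in $\binom{U}{r}$.

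The crux of the proof is then to check that the $r$-uniform hypergraph $\mathcal H'$ on vertex set $U$ with edge set $F$ contains no copy of any member of $\mathcal F$. Indeed, a hypothetical copy of some $\mathcal T\in \mathcal F$ inside $\mathcal H'$ would use only edges of $F$; picking any edge $e$ of this copy, we would have exhibited, inside $\mathcal H[U]\supseteq \mathcal H'$, a member of $\mathcal F$ containing $e$, which directly contradicts $e\in F$. Consequently $|F|\le \mbox{ex}(|U|,\mathcal F)\le \mbox{ex}(n,\mathcal F)$ (using the trivial monotonicity of $\mbox{ex}(\cdot,\mathcal F)$ in the number of vertices), and combining this with $|F|\ge \epsilon p^{r}N$ gives the claimed bound $N\le \mbox{ex}(n,\mathcal F)/(\epsilon p^{r})$.

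I do not expect a genuine obstacle: the whole argument is essentially a template transfer from Lemma \ref{lemma:admissible}, and the only point that deserves a line of care is the verification that $\mathcal H'$ is $\mathcal F$-free, which is forced by the very definition of $F$.
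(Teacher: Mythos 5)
Your proof is correct and follows the paper's argument essentially verbatim: sample $U$ by including each vertex with probability $p$, note that each non-rich edge contributes at least $\epsilon p^{r}$ to the expected number of edges $e\subseteq U$ with no $\mathcal F$-copy in $\mathcal H[U]$ through $e$, fix a good realization, and observe that the resulting edge set is $\mathcal F$-free and hence has at most $\mathrm{ex}(n,\mathcal F)$ members. The only cosmetic difference is that you explicitly invoke monotonicity of $\mathrm{ex}(\cdot,\mathcal F)$, while the paper simply treats the witness hypergraph as living on all $n$ vertices.
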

    
\begin{proof}
Select each vertex of $\mathcal{H}$ with probability $p$, and let $U$ be the set of selected vertices. For $e\in \mathcal{H}$, let $B_e$ be the event that $e\subset U$ and there exists no copy of a member of $\mathcal{F}$ in $\mathcal{H}[U]$ containing $e$. Also, let $\mathcal{H}'$ be the subhypergraph of $\mathcal{H}$ formed by the not $(\mathcal{F},p,\epsilon)$-rich edges. Note that for each $e\in E(\mathcal{H}')$, we have $\mathbb{P}(B_e)=p^r\mathbb{P}(B_e| e\subset U)\geq \epsilon p^{r}$. 

Let $\mathcal{H}''$ be the subhypergraph of $\mathcal{H}[U]$ formed by those edges $e$ for which $B_e$ happens. By linearity of expectation, we have  $$\mathbb{E}(|E(\mathcal{H}'')|)\geq |E(\mathcal{H}')|\epsilon p^{r},$$
so there exists a choice for $U$ such that $|E(\mathcal{H}'')|\geq |E(\mathcal{H}')|\epsilon p^{r}$. Note that $\mathcal{H}''$ cannot contain any member of a copy of $\mathcal{F}$, so $|E(\mathcal{H}'')|\leq \mbox{ex}(n,\mathcal{F})$, which implies $|E(\mathcal{H}')|\leq \frac{\mbox{ex}(n,\mathcal{F})}{\epsilon p^{r}}.$
\end{proof}    
    
\begin{proof}[Proof of Lemma \ref{lemma:gluing}]
 Let $\mathcal{H}$ be a hypergraph on $n$ vertices with more than $2^{r+1}(\mbox{ex}(n,\mathcal{F})+\mbox{ex}(n,\mathcal{F}'))$ edges. By Lemma \ref{lemma:rich}, the number of edges of $\mathcal{H}$ which are not $(\mathcal{F},\frac{1}{2},\frac{1}{2})$-rich is at most $2^{r+1}\mbox{ex}(n,\mathcal{F})$, and the number of edges of $\mathcal{H}$ which are not $(\mathcal{F}',\frac{1}{2},\frac{1}{2})$-rich is at most $2^{r+1}\mbox{ex}(n,\mathcal{F}')$. Therefore, $\mathcal{H}$ has an edge $e$ which is both $(\mathcal{F},\frac{1}{2},\frac{1}{2})$-rich and $(\mathcal{F}',\frac{1}{2},\frac{1}{2})$-rich.
 
 Color the vertices in $V(\mathcal{H})\setminus e$ red or blue independently with probability $\frac{1}{2}$, and suppose that the vertices of $e$ receive both colors. Then, the probability that there exists a red copy of a member of $\mathcal{F}$ containing $e$ is more than $\frac{1}{2}$. Also, the probability that there exists a blue copy of a member of $\mathcal{F}'$ containing $e$ is more than $\frac{1}{2}$. But then there exists a coloring such that $e$ is contained in both a red copy of some $\mathcal{T}\in\mathcal{F}$, and a blue copy of some $\mathcal{T}'\in\mathcal{F}'$, which means that $\mathcal{H}$ contains $\mathcal{T}\oplus\mathcal{T}'.$
\end{proof}

Let us remark that the statement of Lemma \ref{lemma:gluing} remains true even if we strengthen the definition of $\mathcal{T}\oplus\mathcal{T}'$ by specifying the edges of $\mathcal{T}$ and $\mathcal{T}'$ we wish to glue together. For ease of notation we only state the weaker version, which already serves our purposes.

 Now we are ready to prove the main theorem of this section.

\begin{proof}[Proof of Theorem \ref{thm:torus}]
 Let $g$ be the genus of $\mathcal{S}$. We prove by induction on $g$ that there exists a constant $c(g)$ such that $\mbox{ex}_{hom}(n,\mathcal{S})\leq c(g)n^{5/2}$. The case $g=1$ follows from Theorem \ref{thm:torus}. Suppose that $g>1$. Let $\mathcal{F}$ be the family of triangulations of the surface of genus $g-1$, and let $\mathcal{F}'$ be the family of triangulations of the torus. Then by Lemma \ref{lemma:gluing}, we have
 
$$\mbox{ex}(\mathcal{F}\oplus\mathcal{F}')\leq 16(\mbox{ex}(\mathcal{F},n)+\mbox{ex}(\mathcal{F}',n))\leq 16(c(g-1)+c(1))n^{5/2}.$$

But for each $\mathcal{T}\oplus\mathcal{T}'\in \mathcal{F}\oplus\mathcal{F}'$, after removing the edge $e$ we glued $\mathcal{T}$ and $\mathcal{T}'$ along, we get a triangulation of the surface of genus $g$. Therefore, we can take $c(g)=16(c(g-1)+c(1))$, finishing the proof. 
\end{proof}

\section{Topological cycles}\label{sect:cycles}
In this section, we present the proof of Theorem \ref{thm:cycles}. We will reduce the problem to  finding rainbow cycles in graphs whose vertices are colored with certain colors. We present this problem in the next subsection.

\subsection{Rainbow cycles}

Let $G$ be a graph. If $r$ is a positive integer and $X$ is some base set, call an assignment $f:V(G)\rightarrow X^{(r)}$ an \emph{$r$-set coloring } of $G$, and for $v\in V(G)$, call the $r$-element set $f(v)$ the \emph{color} of $v$. Say that an $r$-set coloring $f$ is \emph{diverse} if for any two distinct vertices $v,w\in V(G)$, if $v$ and $w$ are neighbors, or $v$ and $w$ has a common neighbor, then $f(v)$ and $f(w)$ are disjoint. Finally, say that a subgraph of $G$ is \emph{rainbow}, if any two of its vertices are colored with disjoint sets. The main result of this subsection is the following lemma.

\begin{lemma}\label{lemma:rainbow}
  Let $r,k$ be positive integers, then there exists $c=c(r,k)$ such that the following holds. If $G$ is a graph on $n$ vertices with a diverse $r$-set coloring which contains no rainbow cycle of length at most $2k$, then $G$ has at most $c n^{1+1/k}$ edges.
\end{lemma}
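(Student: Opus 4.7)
The plan is to mimic the Moore--Bondy--Simonovits argument bounding $\mbox{ex}(n,\{C_3,\dots,C_{2k}\})$ by $O(n^{1+1/k})$, with the diversity of the coloring used to keep the rainbow constraint from spoiling the standard BFS count. Assume $|E(G)| > C n^{1+1/k}$ for a constant $C=C(r,k)$ to be chosen large; iterated deletion of vertices of degree below $d := \tfrac{1}{2}C n^{1/k}$ produces a subgraph $G'$ with minimum degree at least $d$, still satisfying the hypotheses. Fix a root $v_0\in V(G')$ and let $R_i$ denote the number of (ordered) rainbow paths of length $i$ starting at $v_0$. I claim that $R_i \geq (d/2)^i$ for $0\leq i\leq k$ whenever $d \geq 4rk$: given a rainbow path $v_0,u_1,\dots,u_i$, an extension $u_{i+1}\in N(u_i)$ preserves rainbow-ness iff $f(u_{i+1})$ is disjoint from each $f(u_j)$, $0\leq j\leq i$. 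Diversity guarantees this automatically for $j\in\{i-1,i\}$, and for each of the remaining at most $r(i-1)$ colors, the observation that any two vertices of $N(u_i)$ share $u_i$ as a common neighbour (and hence have disjoint color sets) implies that each such color forbids at most one vertex of $N(u_i)$. Together with ruling out $u_{i-1}$, at most $r(i-1)+1$ neighbours of $u_i$ are forbidden, leaving $\geq d/2$ valid extensions, and the claim follows by induction on $i$.

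By pigeonhole there is a vertex $w\in V(G')$ with $M := R_k/n \geq (d/2)^k/n$ rainbow paths of length $k$ from $v_0$ to $w$; for $C$ sufficiently large in $r,k$, $M$ is an arbitrarily large constant. The main task is to extract from these $M$ paths a pair $P_1,P_2$ whose union is a rainbow cycle of length $2k$, which contradicts the hypothesis and, together with the counts above, yields $d = O(n^{1/k})$ and hence $|E(G)| = O(n^{1+1/k})$. Such a pair forms a rainbow $C_{2k}$ exactly when the paths are internally vertex-disjoint \emph{and} their interior color sets are disjoint. Both conditions I would handle by iterative pruning. For interior-vertex conflicts, at each position $j\in\{1,\dots,k-1\}$ a convexity/pigeonhole argument on the position-$j$ vertex restricts to a sub-collection with pairwise distinct $j$-th vertices, at a cost depending only on $r,k$; if instead almost all remaining paths pass through a common vertex at position $j$, one recurses on the shorter rainbow prefix and suffix sub-collections. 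For interior-color conflicts, the crucial input from diversity is that the set $S_c$ of vertices whose color set contains a given $c$ is a distance-$\geq 3$ independent set in $G$, so within any rainbow path of length $k$ each color appears in at most one vertex, and within any neighbourhood a given color forces at most one vertex -- both of which severely limit the concentration of a single color across the family of paths. Choosing $C$ large enough in $r,k$ to absorb the total loss over all $k-1$ positions and at most $rk$ colors of a fixed ``pivot'' path leaves two mutually compatible rainbow $v_0$--$w$ paths that form the desired rainbow $C_{2k}$.

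The main technical obstacle is this final extraction. Unlike in the classical Bondy--Simonovits setting, two rainbow paths with common endpoints that are internally vertex-disjoint need not produce a rainbow cycle, because the interior colors may still overlap across the two paths. Diversity is exactly what keeps this tractable -- each color lies on a distance-$\geq 3$ independent set, so a given color cannot cluster in a neighbourhood or along a rainbow path -- but marrying the interior-vertex and interior-color pruning, and tracking their interaction across the $k-1$ interior positions of a path, is the delicate combinatorial heart of the proof.
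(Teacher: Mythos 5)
Your proposal takes a genuinely different route from the paper. The paper proves Lemma~\ref{lemma:rainbow} via the homomorphism-counting method of Janzer: Lemma~\ref{lemma:badhom} bounds the number of non-rainbow homomorphisms of $C_{2\ell}$ by roughly $(\Delta(G)\,\mathrm{hom}(C_{2\ell-2})\,\mathrm{hom}(C_{2\ell}))^{1/2}$ using diversity in a crucial double count over dyadic scales, then Jiang--Seiver supplies a near-regular subgraph and Sidorenko's inequality a lower bound on $\mathrm{hom}(C_{2k})$, and telescoping over $\ell=2,\dots,k$ forces a rainbow $C_{2\ell}$ for some $\ell\le k$. Your plan instead mimics Bondy--Simonovits: grow rainbow paths from a fixed root and extract a rainbow cycle from two of them. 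The first half of your argument is fine -- the bound $R_i\ge (d/2)^i$ is correct, and the accounting of at most $r(i-1)+1$ forbidden neighbours (diversity automatically handling the two most recent vertices, and the ``disjoint colours inside $N(u_i)$'' observation handling the rest) is exactly right.

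The problem is the extraction step, which you yourself flag as ``the delicate combinatorial heart of the proof,'' and which I do not think is resolved by the sketch you give. There are two intertwined difficulties. First, unlike in the classical setting, finding two internally vertex-disjoint $v_0$--$w$ paths is not enough: the resulting $C_{2k}$ can pair up interior vertices of the two paths at graph distance $\ge 3$, so diversity gives no protection and the cycle need not be rainbow (already for $k=3$ a $C_6$ built from two disjoint length-$3$ paths can fail to be rainbow, even though every $C_4$ is automatically rainbow under a diverse colouring). Second, the ``cost depending only on $r,k$'' claim for the pruning is not justified: all $M$ paths could in principle share a single bottleneck vertex at some fixed position, or a single colour could be hit (at different, pairwise far-apart vertices of $S_c$) by a non-negligible fraction of the paths, so the pruning loss is not a priori bounded by a function of $r,k$ alone. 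Your recursion-to-prefixes idea is the right instinct for the bottleneck case, but you would then be hunting for a \emph{shorter} rainbow cycle, so the recursion must be set up to produce a rainbow cycle of some length $\le 2k$ rather than exactly $2k$, and the base case, the colour-disjointness across the two branches, and the bookkeeping of losses all need to be carried out; none of that is in the proposal. As written this is a genuine gap, not merely an omitted routine verification. If you want to pursue a BFS-style proof, I would suggest first isolating a clean lemma of the form ``among $M$ rainbow $v_0$--$w$ paths of length $\le k$, either two of them close up into a rainbow cycle of length $\le 2k$, or $M\le g(r,k)$'' and proving that by induction on $k$, handling the bottleneck and colour-concentration cases explicitly; alternatively, following the paper's homomorphism-counting route avoids the extraction problem entirely, which is precisely why Janzer's method is used here.
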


The proof of this lemma follows very closely the ideas presented in a recent paper of Janzer \cite{J20}. The next lemma we prove is a slight modification of Lemma 2.1 in \cite{J20}.

Let $H$ and $G$ be  two graphs. A \emph{homomorphism} (not to confuse with homeomorphism) from $H$ to $G$ is a function $\phi:V(H)\rightarrow V(G)$ such that $\phi(x)\phi(y)\in E(G)$  if $xy\in E(H)$. Also, $\mbox{hom}(H,G)$ denotes the number of homomorphisms from $H$ to $G$. If $G$ is clear from the context, we write simply $\mbox{hom}(H)$ instead of $\mbox{hom}(H,G)$. As usual, $P_{\ell}$ denotes the path of length $\ell$, and $C_\ell$ denotes the cycle of length $\ell$; we also consider $C_2$ as the degenerate cycle of length 2, so $\mbox{hom}(C_2,G)=2|E(G)|$.  If $y,z\in V(G)$, then $\mbox{hom}_{y,z}(P_{\ell})$ denotes the number of walks of length $\ell$ with endpoints $y$ and $z$. Finally, $\Delta(G)$ denotes the maximum degree of $G$.

\begin{lemma}\label{lemma:badhom}
 Let $G$ be a graph with a diverse $r$-set coloring $f$ of the vertices, and let $\ell\geq 2$. Then the number of homomorphisms of $C_{2\ell}$ which are not rainbow is at most $$16\ell(r\ell\Delta(G)\mbox{hom}(C_{2\ell-2})\mbox{hom}(C_{2\ell}))^{1/2}.$$
\end{lemma}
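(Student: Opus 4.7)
The plan is to classify every non-rainbow homomorphism $\phi:C_{2\ell}\to G$ (writing $x_i=\phi(v_i)$) by the shortest cyclic distance $a$ between two positions $i,j$ whose images satisfy $f(x_i)\cap f(x_j)\neq\emptyset$. Diversity of $f$ immediately rules out $a=1$, since $x_i$ and $x_{i+1}$ are distinct $G$-adjacent vertices and hence have disjoint color sets, and forces $x_i=x_{i+2}$ when $a=2$: otherwise $x_i$ and $x_{i+2}$ would be distinct, share the common $G$-neighbor $x_{i+1}$, and so by diversity have disjoint color sets. Thus each non-rainbow $\phi$ either has some $x_i=x_{i+2}$ or admits a witness pair at cyclic distance $a\in\{3,\ldots,\ell\}$.

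The contribution from witness distance $a=2$ is easy. For each of the $2\ell$ choices of $i$, a closed walk of length $2\ell$ with $x_i=x_{i+2}$ is determined by a vertex $v=x_i$, a neighbor $x_{i+1}\in N(v)$ (at most $\Delta$ choices), and a closed walk of length $2\ell-2$ based at $v$. Summing yields at most $2\ell\,\Delta\,\mathrm{hom}(C_{2\ell-2})$, which is below the target bound.

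The main work is the case $a\in\{3,\ldots,\ell\}$. For a fixed pair $(i,j)$ at cyclic distance $a$, splitting $C_{2\ell}$ at those two positions gives
\[
N_{i,j}\;=\;\sum_{y,z:\,f(y)\cap f(z)\neq\emptyset}\mathrm{hom}_{y,z}(P_a)\,\mathrm{hom}_{y,z}(P_{2\ell-a}).
\]
Setting $X_c=\{v:c\in f(v)\}$ and using $\mathbf{1}[f(y)\cap f(z)\neq\emptyset]\le|f(y)\cap f(z)|=\sum_c\mathbf{1}[y,z\in X_c]\le r$, I would replace the colour-collision constraint on $(y,z)$ by an unrestricted sum at the cost of a factor $r$. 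To bring in $\Delta$ and the length $2\ell-2$, I peel off a length-$2$ segment from $P_a$, writing $\mathrm{hom}_{y,z}(P_a)=\sum_{u,w}\mathbf{1}[yu,uw\in E]\,\mathrm{hom}_{w,z}(P_{a-2})$ and bounding $\sum_u\mathbf{1}[yu,uw\in E]=\mathrm{hom}_{y,w}(P_2)\le\Delta$. The leftover factors $\mathrm{hom}_{w,z}(P_{a-2})$ and $\mathrm{hom}_{y,z}(P_{2\ell-a})$ glue into walks of total length $2\ell-2$, so a Cauchy--Schwarz in the two remaining variables separates the product into $\sqrt{\mathrm{hom}(C_{2\ell-2})}$ and $\sqrt{\mathrm{hom}(C_{2\ell})}$. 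Summing over the $O(\ell^2)$ pairs $(i,j)$ and adding the $a=2$ contribution then produces $16\ell\sqrt{r\ell\,\Delta\,\mathrm{hom}(C_{2\ell-2})\mathrm{hom}(C_{2\ell})}$ up to constants.

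The main obstacle will be organising the Cauchy--Schwarz in the $a\ge 3$ case so that exactly one factor in the resulting product collapses to $\sqrt{\mathrm{hom}(C_{2\ell})}$ (arising from a doubled $C_{2\ell}$ closed walk) while the other captures the product $r\ell\,\Delta\,\mathrm{hom}(C_{2\ell-2})$. One subtlety is avoiding the naive bound $\sqrt{r\,\mathrm{hom}(C_{2a})\mathrm{hom}(C_{4\ell-2a})}$, which by log-convexity of $k\mapsto\mathrm{hom}(C_{2k})$ is always at least $\sqrt{r}\,\mathrm{hom}(C_{2\ell})$ and therefore useless; the peeling of a $P_2$ is what steers the estimate onto the correct length. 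This is precisely the set-colouring analogue of the edge-coloured manoeuvre underlying Lemma~2.1 of Janzer~\cite{J20}, and I would model the bookkeeping on his proof.
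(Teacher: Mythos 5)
Your proposed decomposition by collision distance $a$ takes a genuinely different route from the paper, which follows Janzer's dyadic double-counting: the paper classifies homomorphic copies $(x_1,\dots,x_{2\ell})$ by the dyadic scales of $\mathrm{hom}_{x_1,x_{\ell+2}}(P_{\ell-1})$ and $\mathrm{hom}_{x_2,x_{\ell+2}}(P_\ell)$, counts the resulting quantity $\gamma_{s,t}$ in two ways (one using the degree $\Delta$, the other using the diverse colouring to restrict $x_1$ to at most $r\ell$ neighbours of $x_2$), and balances the two estimates by a threshold $q$ satisfying $2^{2q}=r\ell\,\mathrm{hom}(C_{2\ell})/(\Delta\,\mathrm{hom}(C_{2\ell-2}))$. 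Your route bypasses this balancing, and that is where it breaks down.

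Concretely, your $a=2$ step already fails. You bound the $a=2$ contribution by $2\ell\Delta\,\mathrm{hom}(C_{2\ell-2})$ and assert this is below the target $16\ell\sqrt{r\ell\,\Delta\,\mathrm{hom}(C_{2\ell-2})\,\mathrm{hom}(C_{2\ell})}$, which amounts to claiming $\Delta\,\mathrm{hom}(C_{2\ell-2})\le 64r\ell\,\mathrm{hom}(C_{2\ell})$. This is false in general: take $G$ to be a disjoint union of a star $K_{1,n}$ and $m$ copies of $K_3$, with a valid diverse colouring (leaves of the star and vertices in each triangle get pairwise disjoint $r$-sets; different components may share colours). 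Then $\Delta=n$, $\mathrm{hom}(C_{2\ell})=2n^\ell+m(4^\ell+2)$, and $\mathrm{hom}(C_{2\ell-2})=2n^{\ell-1}+m(4^{\ell-1}+2)$, so for $m\,4^\ell\gg n^\ell$ one has $\Delta\,\mathrm{hom}(C_{2\ell-2})/\mathrm{hom}(C_{2\ell})\approx n/4$, which exceeds $64r\ell$ once $n$ is large. The defect is that the crude $\Delta$ bound on the inserted neighbour $x_{i+1}$ is too lossy when the highest-degree vertex is not where most homomorphisms live. The dyadic $\gamma_{s,t}$ device exists precisely to trade off this loss against the $r\ell$ factor, and any proof along your lines would need an analogous mechanism (rather than the claimed peeling of a $P_2$, which, after you bound $\sum_u\mathbf{1}[yu,uw\in E]\le\Delta$, severs the connection between $y$ and $w$ and leaves walk-counts that do not glue into closed walks of the right length). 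For $a\ge 3$ the same issue persists, compounded by the fact that replacing $\mathbf{1}[f(y)\cap f(z)\neq\emptyset]$ by $r$ throws away the structural restriction from diversity — the paper instead uses diversity to limit $x_1$ to $r\ell$ choices once $x_2,\dots,x_{\ell+2}$ are fixed, which is a very different use of the hypothesis.
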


\begin{proof}
  For a positive integer $s$, let $\alpha_s$ be the number of walks of length $\ell-1$ whose endpoints $y,z$ satisfy $2^{s-1}\leq\mbox{hom}_{y,z}(P_{\ell-1})\leq 2^{s}$, and let $\beta_{s}$ be the number of walks of length $\ell$ whose endpoints $y,z$ satisfy $2^{s-1}\leq\mbox{hom}_{y,z}(P_{\ell})\leq 2^{s}$. Then 
  $$\sum_{s\geq 1}\alpha_{s}2^{s-1}<\mbox{hom}(C_{2\ell-2}),$$
  and 
   $$\sum_{s\geq 1}\beta_{s}2^{s-1}<\mbox{hom}(C_{2\ell}).$$
   For positive integers $s$ and $t$, let $\gamma_{s,t}$ denote the number of homomorphic copies $(x_1,\dots,x_{2\ell})$ of $C_{2\ell}$ such that $f(x_1)$ and $f(x_i)$ are not disjoint for some $i\in\{2,\dots,\ell+1\}$, $2^{s-1}\leq\mbox{hom}_{x_1,x_{\ell+2}}(P_{\ell-1})<2^{s}$ and $2^{t-1}\leq \mbox{hom}_{x_2,x_{\ell+2}}(P_\ell)<2^t$.
   
   We can bound $\gamma_{s,t}$ two ways. 
   \begin{enumerate}
       \item $\gamma_{s,t}\leq \alpha_s \Delta(G)2^{t}$. Indeed, there are at most $\alpha_{s}$ ways to choose $x_{\ell+2},x_{\ell+3},\dots,x_{2\ell},x_1$, then, there are at most $\Delta(G)$ ways to choose $x_2$ with $x_1$ already chosen, and there are at most $2^{t}$ ways to choose $x_3,\dots,x_{\ell+1}$ by the inequality $\mbox{hom}_{x_2,x_{\ell+1}}(P_{\ell})<2^t$. 
       
       \item $\gamma_{s,t}\leq \beta_{t}r\ell2^{s}$. Indeed, there are at most $\beta_{t}$ ways to choose the vertices $x_2,\dots,x_{\ell+2}$. Then, there are at most $r\ell$ choices for $x_1$. This is true as $f(x_1)\cap f(x_i)\neq \emptyset$ for some $i\in \{2,\dots,\ell+1\}$, but the neighbors of $x_2$ have disjoint colors by $f$ being diverse, so at most $r$ neighbours of $x_2$ have a color intersecting $f(x_i)$. Finally, there are at most $2^s$ further choices for $x_{\ell+3},\dots,x_{2\ell}$ as $\mbox{hom}_{x_1,x_{\ell+2}}(P_{\ell-1})<2^s$.
       
   \end{enumerate}
   
   The number of homomorphisms of $C_{2\ell}$ which are not rainbow is at most $2k\sum_{s,t\geq 1}\gamma_{s,t}$. Indeed, if $(x_1,\dots,x_{2\ell})$ is a homomorphic copy of $C_{2\ell}$ which is not rainbow, then at least one of its $2k$ cyclic shifts $(x_1',\dots,x_{2\ell}')$ satisfy that $f(x_1')\cap f(x_i')\neq\emptyset$ for some $i\in\{2,\dots,\ell+1\}$. 
   
   Let us bound the sum $\sum_{s,t\geq 1}\gamma_{s,t}$. Let $q$ satisfy $2^{2q}=\frac{r\ell\mbox{hom}(C_{2\ell})}{\Delta(G)\mbox{hom}(C_{2\ell-2})}$. Divide the sum into two parts. Firstly,
   \begin{align*}
    \sum_{s,t:s\leq t-q}\gamma_{s,t}&\leq \sum_{s,t:s\leq t-q}\beta_{t}r\ell2^{s}\leq 2r\ell2^{-q}\sum_{t\geq 1}\beta_{t}2^{t}\\
    &\leq 4r\ell2^{-q}\mbox{hom}(C_{2\ell})=4(r\ell\Delta(G)\mbox{hom}(C_{2\ell})\mbox{hom}(C_{2\ell-2}))^{1/2}.
   \end{align*}
   Secondly,
    \begin{align*}
    \sum_{s,t:s> t-q}\gamma_{s,t}&\leq \sum_{s,t:s> t-q}\alpha_{s}\Delta(G)2^{t}\leq 2\Delta(G)2^{q}\sum_{s\geq 1}\alpha_{s}2^{s}\\
    &\leq 4\Delta(G)2^{q}\mbox{hom}(C_{2\ell-2})=4(rl\Delta(G)\mbox{hom}(C_{2\ell})\mbox{hom}(C_{2\ell-2}))^{1/2}.
   \end{align*}
   Hence, we get $\sum_{s,t\geq 1}\gamma_{s,t}\leq 8(r\ell\Delta(G)\mbox{hom}(C_{2\ell})\mbox{hom}(C_{2\ell-2}))^{1/2}$, finishing the proof.
   \end{proof}
   
   We prepare the proof of Lemma \ref{lemma:rainbow} with two more claims. The fist one is a result of Jiang and Seiver \cite{JS12} stating that not too sparse graphs contain balanced subgraphs.
   
   \begin{claim}(Jiang, Seiver \cite{JS12})\label{claim:JS}
   Let $\alpha>0$, then there exist $c_{1},c_2>0$ such that the following holds. Let $G$ be a graph on $n$ vertices with at least $cn^{1+\alpha}$ edges. Then $G$ contains a subgraph $G'$ on $m$ vertices for some positive integer $m$ such that every degree of $G'$ is between $c_1cm^{\alpha}$ and $c_2cm^{\alpha}$.
   \end{claim}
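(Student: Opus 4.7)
My plan is to establish this ``almost-regular subgraph'' lemma by a two-stage argument built around the normalised density $\rho(H) := |E(H)|/|V(H)|^{1+\alpha}$.

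\textbf{Step 1 (minimum degree).} I would take $H^*$ to be a nonempty subgraph of $G$ maximising $\rho$. For every $v \in V(H^*)$, the extremal property $\rho(H^*-v) \le \rho(H^*)$ rearranges directly to
\[
\deg_{H^*}(v) \;\ge\; \rho(H^*)\bigl(m^{1+\alpha}-(m-1)^{1+\alpha}\bigr) \;\ge\; c_1\,c\,m^\alpha,
\]
where $m := |V(H^*)|$, I used $m^{1+\alpha}-(m-1)^{1+\alpha} \ge (1+\alpha)(m-1)^\alpha$ together with $\rho(H^*) \ge \rho(G) \ge c$, and $c_1 = c_1(\alpha) > 0$ (for instance $c_1 = (1+\alpha)/2^\alpha$ when $m \ge 2$; the case $m=1$ is vacuous since it would give $\rho(H^*)=0<c$). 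This already gives the claimed lower bound on every degree of $H^*$; it is essentially a clean reformulation of the classical iterative low-degree deletion argument.

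\textbf{Step 2 (maximum degree).} Extremality alone yields no upper bound on $\Delta(H^*)$, so I would pass to a further subgraph. I would partition $V(H^*)$ into dyadic degree classes $V_j := \{v : 2^j \delta \le \deg_{H^*}(v) < 2^{j+1}\delta\}$ with $\delta := c_1 c m^\alpha$, and select a class $V_{j^*}$ that carries a non-negligible fraction of the edge endpoints of $H^*$. A suitable subgraph supported on $V_{j^*}$ (either the induced subgraph $H^*[V_{j^*}]$, or the bipartite subgraph between $V_{j^*}$ and $V(H^*)\setminus V_{j^*}$) has maximum degree at most $2^{j^*+1}\delta$ by construction. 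Rerunning Step 1 inside this restricted subgraph would recover a matching lower bound, producing the desired subgraph $G'$ on $m'$ vertices with all degrees of order $c\,m'^\alpha$, where the ratio between upper and lower bounds is controlled by an absolute constant (depending on $\alpha$).

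\textbf{Main obstacle.} The main technical hurdle will be that a single dyadic pigeonhole over $O(\log m)$ degree classes naively costs a factor $\log m$, whereas the statement requires $c_1, c_2$ to depend only on $\alpha$. To overcome this, I would interleave the dyadic restriction of Step 2 with the extremal density maximisation of Step 1 in a loop where $\rho$ is forced to strictly increase at each pass; a careful bookkeeping should then telescope the multiplicative losses into $\alpha$-dependent constants that are independent of $m$ and $n$. Formalising this iteration is the delicate heart of the argument, but once it is set up the proof proceeds by elementary manipulations of the density potential $\rho$.
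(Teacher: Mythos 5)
Your Step~1 is correct and standard: taking $H^*$ to maximize $\rho(H)=|E(H)|/|V(H)|^{1+\alpha}$ and comparing $\rho(H^*)$ with $\rho(H^*-v)$ gives $\deg_{H^*}(v)\ge(1+\alpha)\rho(H^*)(m-1)^{\alpha}\ge c_1 c\,m^{\alpha}$ for all $v\in V(H^*)$, with $c_1$ depending only on $\alpha$. Note, though, that the paper does not prove this claim at all --- it cites Jiang and Seiver --- so there is no in-paper argument to compare against; you are reconstructing an external result.

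The gap is exactly where you flag it, in Step~2, and the repair you sketch cannot work as described. By the extremal choice of $H^*$, \emph{every} subgraph of $H^*$ has $\rho$-value at most $\rho(H^*)$; both objects you propose passing to (the induced subgraph $H^*[V_{j^*}]$ and the bipartite subgraph between $V_{j^*}$ and its complement) are subgraphs of $H^*$, so $\rho$ cannot be ``forced to strictly increase at each pass'' --- it can only stay the same or drop, and it will typically drop, since the heaviest of the $\Theta(\log m)$ dyadic classes retains only an $\Omega(1/\log m)$ fraction of the edges while the vertex count need not shrink proportionally. Re-maximizing $\rho$ inside the retained piece just reproduces a subgraph with the Step~1 lower bound and no control on its maximum degree, so each pass pays a fresh $\log$-factor and nothing telescopes; you end up with degree ratio $(\log m)^{O(1)}$, not a constant. (A secondary issue: in the bipartite option only the $V_{j^*}$ side has capped degrees; the other side is unconstrained.) The Erd\H{o}s--Simonovits argument behind Jiang--Seiver's lemma is of a different shape: it fixes a single threshold, a constant multiple of the average degree, and argues by dichotomy on the set $A$ of vertices above it --- either a constant fraction of the edges avoids $A$, in which case $G-A$ already has both the degree cap and a constant fraction of the edges, or most edges are incident to the small set $A$, which forces a jump to a subgraph on far fewer vertices whose $\alpha$-density has grown by a factor controlled in terms of $\alpha$. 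Because the density gain in the second branch is quantified and can compound only a bounded number of times, the constants end up depending only on $\alpha$; an iterated multi-scale pigeonhole has no comparable mechanism.
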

   
   The final claim we need is that even cycles satisfy Sidorenko's conjecture \cite{S91}, and thus we have a good lower bound on the number of homomorphisms of $C_{2k}$.
   
   \begin{claim}(Sidorenko \cite{S91})\label{claim:Sidorenko}
   Let $G$ be a graph on $n$ vertices. Then 
   $$\mbox{hom}(C_{2k},G)\geq \frac{(2|E(G)|)^{2k}}{n^{2k}}.$$
   \end{claim}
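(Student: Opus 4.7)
The plan is to reformulate the claim spectrally and then bound the top eigenvalue of the adjacency matrix using a Rayleigh quotient. Let $A$ denote the adjacency matrix of $G$. The starting point is that $\mbox{hom}(C_{2k},G)$ equals the number of ordered closed walks of length $2k$ in $G$, which in turn equals $\mbox{tr}(A^{2k})$. Since $A$ is real symmetric, its eigenvalues $\lambda_1\ge\lambda_2\ge\dots\ge\lambda_n$ are all real, and we may write $\mbox{tr}(A^{2k})=\sum_{i=1}^{n}\lambda_i^{2k}$.

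The key trick is to discard all but one term. Every $\lambda_i^{2k}$ is nonnegative, being an even power of a real number, so
\[
\mbox{hom}(C_{2k},G)\;=\;\sum_{i=1}^{n}\lambda_i^{2k}\;\ge\;\lambda_1^{2k}.
\]
To finish, I would lower bound the top eigenvalue $\lambda_1$ by plugging the all-ones vector into the Rayleigh quotient characterisation:
\[
\lambda_1\;=\;\max_{x\neq 0}\frac{x^{\top}Ax}{x^{\top}x}\;\ge\;\frac{\mathbf{1}^{\top}A\mathbf{1}}{\mathbf{1}^{\top}\mathbf{1}}\;=\;\frac{2|E(G)|}{n},
\]
where the last equality uses that $\mathbf{1}^{\top}A\mathbf{1}=\sum_{i,j}A_{ij}$ counts each edge twice. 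Combining the two displayed inequalities yields the claimed bound $\mbox{hom}(C_{2k},G)\ge (2|E(G)|)^{2k}/n^{2k}$.

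There is essentially no serious obstacle: the whole argument is a three-line computation once one adopts the spectral viewpoint. The only mild subtlety is noticing that more elementary approaches (for instance, applying Cauchy--Schwarz to rewrite $\mbox{hom}(C_{2k},G)$ as $\sum_{u,v}(A^k)_{uv}^2$ and then bounding walks by induction) only produce an estimate of the shape $\mbox{hom}(C_{2k},G)\ge (2|E(G)|)^k/n^{k-1}$, which is strictly weaker than the Sidorenko bound in the dense regime $|E(G)|\gg n^{1+1/k}$. The spectral shortcut via $\lambda_1$ sidesteps this and treats the sparse and dense cases uniformly.
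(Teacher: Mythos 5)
Your argument is correct. The identity $\mathrm{hom}(C_{2k},G)=\mathrm{tr}(A^{2k})=\sum_i\lambda_i^{2k}$, dropping all but $\lambda_1^{2k}\ge 0$, and the Rayleigh-quotient bound $\lambda_1\ge \mathbf{1}^\top A\mathbf{1}/\mathbf{1}^\top\mathbf{1}=2|E(G)|/n$ together give exactly the claimed inequality, with no gaps. The paper itself gives no proof of this claim --- it is quoted as a known result of Sidorenko --- so there is nothing internal to compare against. For what it's worth, your spectral derivation is the standard short proof that even cycles satisfy Sidorenko's conjecture; Sidorenko's original argument is more general and proceeds via integral/convexity estimates for bipartite graphs rather than eigenvalues, but for $C_{2k}$ the trace computation you give is the cleanest route, and your aside about why the naive Cauchy--Schwarz bound is weaker is accurate.
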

   
   \begin{proof}[Proof of Lemma \ref{lemma:rainbow}]
     We show that $c=c(r,k)=\frac{256k^{3}rc_2}{c_1^2}$ suffices. Let $G$ be a graph on $n$ vertices with at least $cn^{1+1/k}$ edges, and let $f$ be a diverse $r$-set coloring of $G$. By Claim \ref{claim:JS}, $G$ contains a subgraph $G'$ with $m$ vertices such that every degree of $G'$ is between $c_1cm^{1/k}$ and $c_2cm^{1/k}$. In particular, $$ \frac{1}{2}c_1cm^{1+1/k}\leq|E(G')|\leq \frac{1}{2}c_2cm^{1+1/k}.$$
     
     Suppose that $G'$ contains no rainbow copy of $C_{2\ell}$ for $2\leq \ell\leq k$. Then by Lemma \ref{lemma:badhom}, we have $$16\ell(r\ell c_2cm^{1/k}\mbox{hom}(C_{2\ell-2},G')\mbox{hom}(C_{2\ell},G'))^{1/2}\geq \mbox{hom}(C_{2\ell},G'),$$
     or equivalently,
     $$256l^{3}rc_2cm^{1/k}\mbox{hom}(C_{2\ell-2},G')\geq \mbox{hom}(C_{2\ell},G').$$
     But then $$\mbox{hom}(C_{2k},G')<\mbox{hom}(C_2,G')(256 k^{3}rc_2cm^{1/k})^{k-1}<(256k^{3}rc_{2}c)^{k}m^{2}.$$
     On the other hand, by Claim \ref{claim:Sidorenko}, we have
     $$\mbox{hom}(C_{2k},G')\geq (c_1c)^{2k}m^{2}.$$
     Comparing these two inequalities, we get a contradiction by the choice of $c$, so $G'$ contains a rainbow  copy of $C_{2\ell}$ for some $2\leq \ell\leq k$.
   \end{proof}
   
   \subsection{Finding a topological cycle}
   Let us turn to hypergraphs. Let $\mathcal{H}$ be an $r$-partite $r$-uniform hypergraph with vertex classes $A_1,\dots,A_r$. For $i=1,\dots,r$, let $B_{i}$ be the family of $(r-1)$-element sets $X$ in $V(\mathcal{H})$ such that $|X\cap A_j|=1$ for $j\in\{1,\dots,r\}\setminus \{i\}$. Let the \emph{degree of $X$}, denoted by $d(X)=d_{\mathcal{H}}(X)$, be the number of edges of $\mathcal{H}$  containing $X$. Finally, let $C_{i}=C_{i}(\mathcal{H})=\{X\in B_{i}:d(X)>0\}$. 
 
 First, we prove a slightly weaker variant of Claim \ref{claim:JS} for hypergraphs.
 
 \begin{lemma}\label{lemma:balanced}
There exist positive real numbers $c_1=c_1(r)$ and  $c_2=c_2(2)$ such that the following holds. Let $h=h(r)=\binom{r+1}{2}$, and let $|A_1|=\dots=|A_r|=n$. Then there exist positive integers $t_1,\dots,t_r$, and a subhypergraph $\mathcal{H}'$ of $\mathcal{H}$ such that 
\begin{enumerate}
    \item for $i=1,\dots,r$ and $X\in C_{i}(\mathcal{H}')$, we have $t_i\leq d_{\mathcal{H}'}(X)<c_1t_i(\log n)^{h}$,
    \item $\mathcal{H}'$ has at least $c_2|E(\mathcal{H})|(\log n)^{-h}$ edges.
\end{enumerate}
 \end{lemma}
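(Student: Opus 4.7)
The plan is to use a single round of simultaneous dyadic pigeonholing across all $r$ coordinate directions, followed by one cleaning pass to restore lower bounds on the codegrees. I expect this approach to give the lemma with the strictly smaller exponent $h=r$; the stated $\binom{r+1}{2}$ looks like a safe over-estimate rather than a tight bound.

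First, for each direction $i\in[r]$ and each $X\in C_i(\mathcal H)$, assign the bucket label $\lfloor \log_2 d_{\mathcal H}(X)\rfloor\in\{0,1,\ldots,\lceil\log_2 n\rceil\}$. Each edge $e$ of $\mathcal H$, with unique vertex $a_i(e)\in A_i$, then carries a bucket vector of length $r$ whose $i$-th coordinate is the bucket label of $e\setminus\{a_i(e)\}$. Since there are at most $O((\log n)^r)$ possible vectors, pigeonhole yields a common vector $(b_1,\ldots,b_r)$ realized by a subhypergraph $\mathcal H_0$ with $|E(\mathcal H_0)|\geq |E(\mathcal H)|/(\log n+1)^r$, and writing $s_i:=2^{b_i}$, every $X\in C_i(\mathcal H_0)$ satisfies $s_i\leq d_{\mathcal H}(X)<2s_i$; in particular the upper bound $d_{\mathcal H_0}(X)<2s_i$ is inherited. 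The crucial extra observation is that since distinct $X,X'\in B_i$ have disjoint edge sets in $\mathcal H$, one has $|C_i(\mathcal H_0)|\leq |E(\mathcal H)|/s_i$, and this is what will let me balance the edge budget in the cleaning step.

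Next, set $\alpha:=8r(\log n+1)^r$ and $t_i:=\max\{1,\lceil s_i/\alpha\rceil\}$, and iteratively delete every $X\in C_i(\mathcal H_{\mathrm{current}})$ whose current codegree has dropped below $t_i$, together with all edges still containing it. Each $X$ is deleted at most once and contributes fewer than $t_i$ removed edges, so the total cleaning loss is at most $\sum_i |C_i(\mathcal H_0)|\cdot t_i\leq \sum_i(|E(\mathcal H)|/s_i)(2s_i/\alpha)=2r|E(\mathcal H)|/\alpha\leq |E(\mathcal H_0)|/2$ by our choice of $\alpha$. The surviving $\mathcal H'$ therefore has at least $|E(\mathcal H_0)|/2\geq c_2|E(\mathcal H)|(\log n)^{-h}$ edges, and each $X\in C_i(\mathcal H')$ obeys $t_i\leq d_{\mathcal H'}(X)<2s_i\leq 2\alpha t_i\leq c_1 t_i(\log n)^h$ for suitable constants $c_1,c_2$.

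The main obstacle is the cascading behavior of the cleaning pass: deletions in direction $i$ can lower codegrees in direction $j\neq i$ and thereby trigger further deletions. The disjointness-based count $|C_i(\mathcal H_0)|\leq |E(\mathcal H)|/s_i$ is exactly what is needed to control this cascade globally, rather than paying a multiplicative log factor per direction during cleaning. A small bookkeeping subtlety is the case $s_i\leq \alpha$, in which $t_i=1$ so no cleaning is performed in direction $i$ and the lower bound $d_{\mathcal H'}(X)\geq 1$ is automatic, while the upper bound $d_{\mathcal H'}(X)<2s_i\leq 2\alpha$ still fits inside $c_1(\log n)^h$.
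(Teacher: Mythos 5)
Your proposal is correct, and it takes a genuinely different and cleaner route than the paper. The paper proceeds by induction on $r$: it first pigeonholes the degrees of sets in $C_r$, then passes to the link hypergraphs $\mathcal{H}_v$ for $v\in A_r$, applies the induction hypothesis there, and does a second pigeonholing over the resulting $(r-1)$-tuples $(t_1^v,\dots,t_{r-1}^v)$, finishing with a cleaning pass. Each level of the induction compounds the polylogarithmic loss (a factor of $\log n$ from the degree pigeonholing plus $(\log n)^{r-1}$ from the tuple pigeonholing plus the inductive loss), leading to the recursion $h(r)=h(r-1)+r$ and hence $h(r)=\binom{r+1}{2}$. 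You instead pigeonhole once, simultaneously over all $r$ bucket coordinates $\bigl(\lfloor\log_2 d_{\mathcal{H}}(e\setminus\{a_i(e)\})\rfloor\bigr)_{i=1}^r$ attached to each edge, giving the improved exponent $h=r$. The observation $|C_i(\mathcal{H}_0)|\le |E(\mathcal{H})|/s_i$ (from the disjointness of the edge-sets through distinct $(r-1)$-transversals in the same direction $i$) is exactly what lets the single cleaning pass absorb the cascading deletions across all $r$ directions with a global budget of $\sum_i |C_i(\mathcal{H}_0)|\,t_i \le 2r|E(\mathcal{H})|/\alpha$, instead of paying per direction. Your handling of the boundary case $s_i\le\alpha$ (so $t_i=1$ and no cleaning is triggered in direction $i$) is also correct and necessary, since $t_i\le 2s_i/\alpha$ fails there. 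So this is a legitimate strengthening of the lemma: the same statement holds with $h=r$ in place of $\binom{r+1}{2}$, at no cost elsewhere in the paper.
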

 
 \begin{proof}
  We prove this by induction on $r$. In the case $r=1$, there is nothing to prove, so suppose that $r>1$.
  
  Let $m=|E(\mathcal{H})|$. For $\ell=1,\dots,\log_2 n=:s$, let $D_{\ell}=\{X\in B_{i}: 2^{\ell-1}\leq d(X)<2^{\ell}\}$. Each edge of $\mathcal{H}$ contains exactly one element of one of the sets $D_0,\dots,D_{s}$, so there exists $\ell$ such that at least $\frac{m}{s}$ edges contain an element of $D_{\ell}$. Delete all edges of $\mathcal{H}$ not containing an element of $D_{\ell}$, let the resulting hypergraph be $\mathcal{H}_{0}$.  Let $u_{r}=2^{\ell-1}$ and $p_r=|C_r(\mathcal{H})|$. Then $p_r\leq \frac{|E(\mathcal{H}_{0})|}{u_{r}}\leq \frac{m}{u_r}$.
  
  Now for $v\in A_r$, consider the link graph $\mathcal{H}_v$ of $v$ in $\mathcal{H}_{0}$. Let $h'=h(r-1)$, and $c_1'=c_1(r-1),c_2'=c_2(r-1)$. Then $\mathcal{H}_v$ is an $(r-1)$-partite ${(r-1)}$-uniform hypergraph, so we can apply our induction hypothesis to conclude that there exist positive integers $t_1^{v},\dots,t_{r-1}^{v}$ and a subhypergraph $\mathcal{H}'_v$ of $\mathcal{H}_v$ such that 
  \begin{enumerate}
   \item for $i=1,\dots,r-1$ and $X\in C_{i}(\mathcal{H}'_{v})$, we have $t_i^{v}\leq d_{\mathcal{H}'_{v}}(X)<c_1't_i^{v}(\log n)^{h'}$,
    \item $\mathcal{H}'_{v}$ has at least $c_2'|E(\mathcal{H}_{v})|(\log n)^{-h'}$ edges. 
    \end{enumerate}
    
   For $\overline{\ell}=(\ell_1,\dots,\ell_{r-1})\in [s]^{r-1}$, let $\mathcal{H}_{\overline{\ell}}$ be the hypergraph formed by those edges $X\cup \{v\}\in\mathcal{H}_{0}$, where $X\in \mathcal{H}_v'$ and $2^{\ell_{i}-1}\leq t_{i}^{v}<2^{\ell_{i}}$ for $i=1,\dots,r-1$. We have $$\sum_{v\in A_r}|E(\mathcal{H}'_{v})|\geq c_2'|E(\mathcal{H}_{0})|(\log n)^{-h'}\geq c_2'm(\log n)^{-h'-1},$$ so there exists $\overline{\ell}$ such that $|E(\mathcal{H}_{\overline{\ell}})|\geq c_2'm(\log n)^{-h'-r}=c_2'm(\log n)^{h}$. 
   
   Let $\mathcal{H}_{1}=\mathcal{H}_{\overline{\ell}}$, let $m_1=|E(\mathcal{H}_{1})|$, and for $i=1,\dots,r-1$, let $u_{i}=2^{\ell_i-1}$. In $\mathcal{H}_{1}$, every $X\in C_{i}(\mathcal{H}_{1})$ has degree at most $2c_1'u_{i}(\log n)^{h'}$, and if $i\leq r-1$, then $d(X)\geq u_{i}$. However, if $X\in C_{r}(\mathcal{H}_{1})$, the degree of $X$ might be smaller than $u_r$. For $i\in [r-1]$, let $p_{i}=|C_{r}(\mathcal{H}_{1})|$, then $p_{i}\leq \frac{m_1}{u_i}$. Also, $p_r\leq \frac{c_2'm_1(\log n)^{h}}{u_{r}}$.
   
   For $i\in [r-1]$, let $t_{i}=\frac{1}{2r}u_{i}$, and let $t_{r}=\frac{1}{2rc_2'}u_{r}(\log n)^{-h}$. Now repeat the following procedure. If there exists $i\in [r]$ and $X\in C_{i}(\mathcal{H}_1)$ of such that $d(X)<t_{i}$, then delete all edges from $\mathcal{H}_1$ containing $X$, otherwise stop. Let $\mathcal{H}'$ be the hypergraph we get at the end of the procedure. In total, we deleted at most $\sum_{i=1}^{r}t_ip_{i}$ edges of $\mathcal{H}_1$. But by the choice of $t_{i}$ and the bounds on $p_{i}$, we have $t_{i}p_{i}\leq \frac{m_1}{2r}$ for $i\in [r]$. This means that we deleted at most half of the edges, so $\mathcal{H}'$ is a nonempty hypergraph. Furthermore, we have 
   \begin{enumerate}
       \item for $i=1,\dots,r-1$, if $X\in C_{i}(\mathcal{H}')$, then $t_{i}\leq d_{\mathcal{H}'}(X)<2rc_1't_{i}(\log n)^{h'}$,
       \item if $X\in C_{r}(\mathcal{H}')$, then $t_{r}\leq d_{\mathcal{H}'}(X)<4rc_2'(\log n)^{h}$,
       \item $|E(\mathcal{H}')|\geq \frac{m_1}{2}\geq \frac{c_2}{2}m(\log n)^{-h}$.
   \end{enumerate}
   This shows that setting $c_1=\max\{2rc_1',4rc_2'\}$ and $c_2=\frac{c_2'}{2}$ suffices.
 \end{proof}
 
 Let $\mathcal{H}$ be an $r$-partite $r$-uniform hypergraph  with vertex classes $A_1,\dots,A_r$. If $e=x_1\dots x_r$ and $f=y_1\dots y_r$ are two disjoint edges of $\mathcal{H}$, where $x_i,y_i\in A_i$ for $i\in [r]$, write $e\rightarrow f$ if $x_1\dots x_{i}y_{i+1}\dots y_{r}\in E(\mathcal{H})$ for $i\in [r-1]$. Define the graph $L=L(\mathcal{H})$ such that the vertices of $L$ are the edges of $\mathcal{H}$, and $e,f\in V(L)$ are joined by an edge if $e\rightarrow f$ or $f\rightarrow e$. The graph $L$ is naturally $r$-set colored, where the color of each vertex is itself. 
 
 \begin{lemma}\label{lemma:diverse}
 There exist constants $c_3=c_3(r)$ and $h_1=h_1(r)$ such that the following holds. Let $\mathcal{H}$ be an $r$-partite $r$-uniform hypergraph with vertex classes of size $n$, and suppose that $|E(\mathcal{H})|\geq dn^{r-1}$. Let $L=L(\mathcal{H})$, and consider the natural $r$-set coloring of $L$. Then $L$ contains a subgraph $L'$ with average degree $c_3d(\log n)^{-h_1}$ on which the coloring is diverse.
 \end{lemma}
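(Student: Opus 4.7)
The plan is to apply Lemma~\ref{lemma:balanced} to reduce to a balanced subhypergraph $\mathcal{H}_0\subset\mathcal{H}$, lower-bound the edge count of the link graph $L_0:=L(\mathcal{H}_0)$, and then prune $L_0$ to extract a diverse subgraph $L'$ with comparable average degree. Lemma~\ref{lemma:balanced} supplies $\mathcal{H}_0$ with $|E(\mathcal{H}_0)|\gtrsim dn^{r-1}(\log n)^{-h}$ and positive integers $t_1,\ldots,t_r$ satisfying $t_i\le d_{\mathcal{H}_0}(X)<c_1 t_i(\log n)^h$ for every $(r-1)$-set $X\in C_i(\mathcal{H}_0)$; since $|E(\mathcal{H}_0)|\le n^{r-1}c_1 t_j(\log n)^h$ for each $j$, one automatically has $t_j\ge\Omega(d(\log n)^{-O(1)})$ and thus $T:=\prod_i t_i\ge\Omega(d(\log n)^{-O(1)})$.

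To lower-bound $|E(L_0)|$, for each $e=x_1\ldots x_r\in E(\mathcal{H}_0)$ one enumerates edges $f$ with $e\to f$ by choosing $y_r,y_{r-1},\ldots,y_1$ in succession: at each step the number of admissible choices equals the degree of an $(r-1)$-set in some $C_i(\mathcal{H}_0)$ and hence lies in $[t_i,c_1 t_i(\log n)^h]$. A brief cleaning step restricting to starting edges for which every intermediate $(r-1)$-set remains inside its respective $C_i$ supplies a matching lower bound, so each surviving $e$ has $\Omega(T/(\log n)^{O(1)})$ outgoing neighbors, giving $|E(L_0)|\gtrsim|E(\mathcal{H}_0)|\cdot T/(\log n)^{O(1)}$. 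Because $e\to f$ forces $e\cap f=\emptyset$, adjacency in $L_0$ already guarantees disjoint colors, so the only obstruction to diversity is a length-two walk $e\sim g\sim f$ with $e\cap f\ne\emptyset$.

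For a fixed $g=z_1\ldots z_r$ and any $v\in A_j\setminus\{z_j\}$, fixing $y_j=v$ removes one factor from the above enumeration, so $g$ has at most $O(T(\log n)^{O(1)}/t_j)$ neighbors containing $v$; combined with $\sum_{v\in A_j}|\{e\sim g:v\in e\}|=O(T(\log n)^{O(1)})$ (since each neighbor has exactly one vertex in $A_j$), this gives at most $O(T^2(\log n)^{O(1)}/t_j)$ bad pairs of neighbors of $g$ sharing a vertex in $A_j$. Summing over $j$ and using $\sum_j t_j^{-1}\le r$ bounds the bad pairs of neighbors of $g$ by an $O(r(\log n)^{-h})$-fraction of the total $\binom{D_g}{2}=O(T^2(\log n)^{O(1)})$ pairs. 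The subgraph $L'$ is extracted by charging each bad walk to a canonical shared vertex $v\in e\cap f$ and iteratively deleting from $V(\mathcal{H}_0)$ the vertices $v$ responsible for disproportionately many bad walks, which removes only a $(\log n)^{-\Omega(1)}$-fraction of the edges of $\mathcal{H}_0$ (and hence of $L_0$) while destroying every bad walk; the resulting $L'$ has diverse inherited coloring and average degree $\Omega(T/(\log n)^{O(1)})\ge\Omega(d/(\log n)^{h_1})$ for a suitable $h_1=h_1(r)$.

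The main technical obstacle lies in the final deletion step: because a length-two walk in $L_0$ spans up to $3r-1$ vertices of $\mathcal{H}_0$, a naive hitting-set argument at the level of $V(L_0)$ or $E(L_0)$ could erase far too many edges. The resolution is to exploit that each bad walk has at least one shared vertex $v\in e\cap f$, and to run the deletion at the level of $V(\mathcal{H}_0)$, using a second-moment argument on the $\mathcal{H}_0$-degrees $|E_v|$ together with the $O((\log n)^{-h})$ bad-fraction bound above to verify that the vertices whose removal is required collectively carry only a polylog fraction of the edges of $\mathcal{H}_0$. The same ideas also underpin the cleaning step used for the edge-count lower bound of $L_0$, which may require iterating Lemma~\ref{lemma:balanced} or a secondary degree-based truncation before the greedy enumeration can be completed for a typical starting edge.
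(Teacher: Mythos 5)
Your plan tracks the paper's through the main computations: both apply Lemma~\ref{lemma:balanced} to obtain a balanced $\mathcal{H}_0$, both show that each vertex of $L(\mathcal{H}_0)$ has degree of order $T=\prod_i t_i$ up to a $(\log n)^{O(1)}$ factor, both use $t_j\gtrsim d(\log n)^{-O(1)}$, and both arrive at the bound $O(T(\log n)^{O(1)}/t_j)$ on the number of $L$-neighbors of a fixed $g$ that contain a specified vertex of $A_j$. Where the two arguments part ways, and where yours breaks, is the final sparsification. The paper builds a conflict graph $H$ on $E(L(\mathcal{H}_0))$, joining two $L$-edges that share an $L$-endpoint $g$ and whose other endpoints intersect; it bounds $\Delta(H)$ by the quantity just described and takes a greedy independent set of size $|E(L)|/(\Delta(H)+1)$, which is immediately a diverse subgraph with the claimed average degree. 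You instead propose to delete vertices of $V(\mathcal{H}_0)$ that carry ``disproportionately many'' bad two-step walks. This step, which you yourself flag as the crux, does not close: a threshold deletion leaves alive every bad walk whose charged vertex $v$ is below threshold, while deleting \emph{every} $v$ that is the shared vertex of \emph{some} bad walk can wipe out essentially all of $V(\mathcal{H}_0)$ --- even when the bad walks are a tiny fraction of all two-step walks, nothing prevents them from being spread so that almost every $\mathcal{H}_0$-vertex supports at least one, and the gesture towards ``a second-moment argument on $|E_v|$'' does not say what quantity is controlled or why it concludes. Sparsifying at the level of $E(L)$ rather than $V(\mathcal{H}_0)$, via a local bounded-degree conflict-graph argument as in the paper, is exactly what sidesteps this distributional obstruction, and you should do that directly. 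As a smaller point, the ``$O(r(\log n)^{-h})$ fraction'' you claim for the bad pairs among neighbors of a fixed $g$ is really of order $r(\log n)^{O(h)}/d$; the $d$-dependence is essential and is precisely what propagates into the final average degree $c_3 d(\log n)^{-h_1}$.
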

 
 \begin{proof}
   Let $h,c_1,c_2$ be the constants given by Lemma \ref{lemma:balanced}. We show that $c_3=\frac{c_2}{8rc_1^{r}}$ and $h_1=(r+1)h$ suffices. 
   
   By Lemma \ref{lemma:balanced}, there exist positive integers $t_1,\dots,t_r$ and a subgraph $\mathcal{H}'$ of $\mathcal{H}$ such that 
   \begin{enumerate}
    \item for $i=1,\dots,r$ and $X\in C_{i}(\mathcal{H}')$, we have $t_i\leq d_{\mathcal{H}'}(X)<c_1t_i(\log n)^{h}$,
    \item $\mathcal{H}'$ has at least $c_2dn^{r-1}(\log n)^{-h}$ edges.
 \end{enumerate}
  Let $L_1=L(\mathcal{H}')$, and let $T=t_1\dots t_r$. Note that the degree of each vertex $x_1\dots x_{r}$ of $L_1$ is between $T$ and $2Tc_1^{r}(\log n)^{rh}$. Indeed, given $y_{j+1},\dots,y_r\in V(\mathcal{H}')$ for some $j\geq 1$ such that $x_1\dots x_j y_{j+1}\dots y_r\in E(\mathcal{H})$, there are $$t_{j}\leq d_{\mathcal{H}'}(x_1\dots x_{j-1}y_{j+1}\dots y_r)< c_1t_j(\log n)^{h}$$ choices for the vertex $y_{j}\in V(\mathcal{H}')$ such that $x_1\dots x_{j-1}y_j y_{j+1}\dots y_r\in E(\mathcal{H})$.
  
  Also, as $|E(\mathcal{H}')|\leq |C_{i}(\mathcal{H}')|c_1t_i(\log n)^{h}\leq n^{r-1}c_1t_i(\log n)^{h}$, we get $t_{i}\geq \frac{c_2}{c_1}d(\log n)^{-2h}$ for $i\in[r]$. Define the graph $H$ whose vertices are the edges of $L_1$, and $ef\in E(L_1)$ and $e'f'\in E(L_1)$ are joined by an edge if $e=e'$ and $f\cap f'\neq \emptyset$. Given $e=x_1\dots x_{r}\in V(L_1)$, the number of neighbors of $e$ in $L_1$ containing a given $y_i\in A_i$ is at most $\Delta_i:=\frac{T}{t_i}(\log n)^{(r-1)h}c_1^{r-1}$. Therefore, the degree of every vertex of $H$ is at most 
  $$2\sum_{i=1}^{r}\Delta_i\leq 2\sum_{i=1}^r\frac{T}{t_i}(\log n)^{(r-1)h}c_1^{r-1}\leq \frac{2rT(\log n)^{h(r+1)}c_1^{r}}{c_2d}.$$
   But then $H$ has an independent set of size $$\frac{|V(H)|}{\Delta(H)+1}=\frac{|E(L_1)|}{\Delta(H)+1}>\frac{|V(L_1)|T/2}{4rT(\log n)^{h(r+1)}c_1^{r}/c_2d}=\frac{c_2d|V(L_1)|}{8r(\log n)^{h(r+1)}c_1^{r}}.$$
  This independent set is a subgraph $L'$ of $L_1$ in which the natural $r$-set coloring is diverse, and $L'$ has average degree at least $\frac{c_2d}{8r(\log n)^{h(r+1)}c_1^{r}}=c_3d(\log n)^{-h_1}.$ 
 \end{proof}
 
 Now let us show how the existence of rainbow cycles in $L(\mathcal{H})$ implies the existence of topological cycles in $\mathcal{H}$.
 
 \begin{lemma}\label{lemma:rainbow_to_topological}
  If $\mathcal{H}$ is a 3-partite 3-uniform hypergraph and $L(\mathcal{H})$ contains a rainbow cycle of length $2\ell$, then $\mathcal{H}$ contains a torus-like topological cycle of length at most $6\ell.$
 \end{lemma}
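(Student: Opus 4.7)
Let $e_1, \ldots, e_{2\ell}$ be the vertices of the rainbow cycle in cyclic order; write $e_i = x_1^{(i)} x_2^{(i)} x_3^{(i)}$ with $x_j^{(i)}\in A_j$. By the rainbow condition all $6\ell$ vertices $x_j^{(i)}$ are distinct. For each $i$ modulo $2\ell$, at least one of $e_i\to e_{i+1}$ or $e_{i+1}\to e_i$ holds in $\mathcal{H}$; fix a choice and call the direction $\sigma_i\in\{+,-\}$. Either choice produces two intermediate edges $g_1^{(i)},g_2^{(i)}\in E(\mathcal{H})$ so that the four edges $e_i,g_1^{(i)},g_2^{(i)},e_{i+1}$ form a tight path (consecutive edges sharing two vertices).

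The plan is to glue these $2\ell$ tight paths into a single cyclic sequence of triangles forming a topological cycle. When all $\sigma_i$ coincide, the naive concatenation
\[
e_1,g_1^{(1)},g_2^{(1)},e_2,g_1^{(2)},g_2^{(2)},\ldots,e_{2\ell},g_1^{(2\ell)},g_2^{(2\ell)}
\]
is a tight cycle of length $6\ell$, and every tight cycle is automatically a topological cycle. In general, however, a direct check of 2-subset incidences shows that whenever $\sigma_{i-1}\neq\sigma_i$ one specific 2-subset of $e_i$ ends up simultaneously in $e_i$, $g_2^{(i-1)}$, and $g_1^{(i)}$, so the naive construction fails to be a $2$-manifold. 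The remedy is to \emph{skip $e_i$} from the sequence precisely at every index $i$ with $\sigma_{i-1}\neq\sigma_i$; in those cases $g_2^{(i-1)}$ and $g_1^{(i)}$ already share two vertices (the problematic pair itself), so the shortened cyclic sequence still has consecutive edges sharing two vertices. Call the resulting cyclic $3$-uniform hypergraph $\mathcal{C}$; it has at most $6\ell$ edges.

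To verify that $\mathcal{C}$ is a topological cycle, I would run a local case analysis indexed by the four values of $(\sigma_{i-1},\sigma_i)$ around each $e_i$ and check that (a) every $2$-subset of $\mathcal{C}$ lies in at most two edges and (b) the link of every vertex of $\mathcal{C}$ is a path. The analysis is genuinely local because only vertices of the $e_i$ appear in $\mathcal{C}$ and each such vertex's incident edges all lie in the two adjacent segments. Together with the evident connectedness, (a) and (b) imply that $\mathcal{C}$ is a connected triangulated $2$-manifold with boundary, and a direct vertex/edge/face count yields $\chi(\mathcal{C})=0$, so $\mathcal{C}$ is a cylinder or a M\"obius strip by the Classification theorem. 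Since $\mathcal{C}$ inherits 3-partiteness from $\mathcal{H}$, Claim~\ref{claim:3partite} forces $\mathcal{C}$ to be torus-like.

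The main obstacle is recognizing the failure mode of the naive concatenation and designing a local repair. The failure occurs precisely at those $e_i$ where the cyclic pattern of induced arrow directions changes sign; the skip operation fixes exactly those indices while leaving consistent stretches untouched. The bulk of the work is then the case-by-case verification of (a) and (b) at each of the four local configurations $(\sigma_{i-1},\sigma_i)\in\{+,-\}^2$ around an $e_i$.
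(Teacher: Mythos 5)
Your construction is identical to the paper's: form the length-$6\ell$ sequence $e_i, g_1^{(i)}, g_2^{(i)}$ (the paper's $f_i, f_i', f_i''$), then delete $e_i$ precisely at sign changes of the arrow direction, and conclude via Claim~\ref{claim:3partite} that the result is torus-like. The diagnosis of the failure mode is also correct: at a sign change the $2$-subset $x_{i,1}x_{i,2}$ (or $x_{i,2}x_{i,3}$) lies in three triangles, and deleting $e_i$ repairs it because $g_2^{(i-1)}$ and $g_1^{(i)}$ share that same $2$-subset. The only divergence is in how you certify that the shortened sequence is a topological cycle: the paper notices that at each sign change the leftover vertex $x_{i,2}$ has degree two, collapses the pair $g_2^{(i-1)}, g_1^{(i)}$ onto a single triangle $g_i$, observes each collapse is a homeomorphism, and lands on a genuine tight cycle after all collapses are done; you instead propose to directly verify the manifold-with-boundary conditions (every $2$-subset in at most two triangles, every vertex link a path) plus an Euler-characteristic count. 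Both verification routes work; the paper's collapse argument is a bit slicker because it avoids the Euler-characteristic computation and the case analysis is organized around a clean induction, but yours is also sound provided you actually carry out the local case check and the $\chi(\mathcal{C})=0$ count you only gesture at (each deletion of a bad $e_i$ removes exactly one vertex, two $1$-edges, and one triangle, so $\chi$ is preserved from the all-$+$ case, where the sequence is a tight cycle of length $6\ell$).
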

 
 \begin{proof}
    As $\mathcal{H}$ is 3-partite, every topological cycle in $\mathcal{H}$ is torus-like by Claim \ref{claim:3partite}. Therefore, it is enough to show that $\mathcal{H}$ contains a topological cycle.
    
    Let $A_1,A_2,A_3$ be the 3 vertex classes of $\mathcal{H}$ and let $L=L(\mathcal{H})$. For $i\in[2\ell]$, let $f_{i}=x_{i,1}x_{i,2}x_{i,3}\in V(L)=E(\mathcal{H}')$ be the vertices of a rainbow copy of $C_{2\ell}$ in $L$, where $x_{i,j}\in A_{j}$ for $j=1,2,3$. Then $x_{i,j}\neq x_{i',j'}$ for any distinct  $(i,j),(i',j')\in [2\ell]\times[3]$. By the definition of $L$, we have either $f_i\rightarrow f_{i+1}$ or $f_{i+1}\rightarrow f_i$ (indices are meant modulo $2\ell$). Define the edges $f_{i}',f_{i}''$ as follows.
    \begin{enumerate}
        \item  If $f_i\rightarrow f_{i+1}$, let $f_{i}'=x_{i,1}x_{i,2}x_{i+1,3}$ and $f_{i}''=x_{i,1}x_{i+1,2}x_{i+1,3}$.
        \item If $f_{i+1}\rightarrow f_{i}$, let $f_{i}'=x_{i+1,1}x_{i,2}x_{i,3}$ and $f_{i}''=x_{i+1,1}x_{i+1,2}x_{i,3}$.
    \end{enumerate}
     Then $f_i',f_i''$ are also edges of $\mathcal{H}'$. Consider the sequence of edges $f_1,f_1',f_1'',f_2,f_2',f_2'',\dots,f_{2\ell},f_{2\ell}',f_{2\ell}''$. This sequence might not be a topological cycle, but it contains a subsequence which is.  If $i$ is an index such that $f_{i-1}\rightarrow f_{i}$ and $f_{i+1}\rightarrow f_{i}$, or $f_{i}\rightarrow f_{i-1}$ and $f_{i}\rightarrow f_{i+1}$, then remove $f_i$ from the sequence. We show that the resulting subsequence is a proper ordering of the topological cycle~$\mathcal{C}$. 
  
  One way to see this as follows. Let $i\in [2\ell]$.
  \begin{enumerate}
      \item  If $f_{i}\rightarrow f_{i-1}$ and $f_{i}\rightarrow f_{i+1}$, then $f_{i-1}''=x_{i,1}x_{i,2}x_{i-1,3}$ and $f_{i}'=x_{i,1}x_{i,2}x_{i+1,3}$ are consecutive edges in the sequence, and the vertex $x_{i,2}$ does not appear in any other edge of $\mathcal{C}$. Remove $f_{i-1}''$ and $f_{i}'$ from $\mathcal{C}$  and add the 3-element set $g_{i}=x_{i,1}x_{i-1,3}x_{i+1,3}$.
      \item  If $f_{i-1}\rightarrow f_{i}$ and $f_{i+1}\rightarrow f_{i}$, then $f_{i-1}''=x_{i-1,1}x_{i,2}x_{i,3}$ and $f_{i}'=x_{i+1,1}x_{i,2}x_{i,3}$ are consecutive edges in the sequence, and the vertex $x_{i,2}$ does not appear in any other edge of $\mathcal{C}$. Remove $f_{i-1}''$ and $f_{i}'$ from $\mathcal{C}$ and add the 3-element set $g_{i}=x_{i-1,1}x_{i+1,1}x_{i,3}$.
  \end{enumerate}
  After each such replacement, the resulting hypergraph remains homeomorphic to $\mathcal{C}$. But after every such replacement is executed, the resulting hypergraph is a tight cycle, which is a topological cycle. Therefore, $\mathcal{C}$ is a topological cycle as well. See Figure \ref{fig:rainbowcycle} for an illustration.
 \end{proof}
 
 \begin{proof}[Proof of Theorem \ref{thm:cycles}]
  Let $\mathcal{H}$ be a 3-uniform hypergraph with $n$ vertices and $dn^{2}$ edges. Then $\mathcal{H}$ contains a 3-partite subhypergraph with at least $\frac{2}{9}dn^{2}$ edges. By adding isolated vertices, we can assume that each vertex class of $\mathcal{H}'$ has size $n$. 
  
  Let $L=L(\mathcal{H}')$. Let $c_3,h_1$ be the constants given by Lemma \ref{lemma:diverse} in case $r=3$. Then $L$ has a subgraph $L'$ such that the average degree of $L'$ is at least $\frac{2c_3}{9}d(\log n)^{-h_1}$, and the natural 3-set coloring on $L'$ is diverse. 
  
  Let $c=c(3,k)$ be the constant given by Lemma \ref{lemma:rainbow}.  If  the average degree of $L'$ is at least $2c |V(L')|^{1/k}$, then $L'$ contains a rainbow cycle of length at most $2k$. Here, $|V(L')|\leq |E(\mathcal{H}')|\leq dn^{2}$, so if $L'$ contains no rainbow cycle of length at most $2k$, then
  $$2c(dn^{2})^{1/k}>\frac{2c_3}{9}d(\log n)^{-h_1},$$
  which gives $d\leq c'n^{2/(k-1)}(\log n)^{h'}$ with appropriate constants $c',h'>0$.
  
  But if $L'$ contains a a rainbow cycle of length at most $2k$, then $\mathcal{H}$ contains a torus like topological cycle of length at most $6k$ by Lemma \ref{lemma:rainbow_to_topological}.

 \end{proof}

  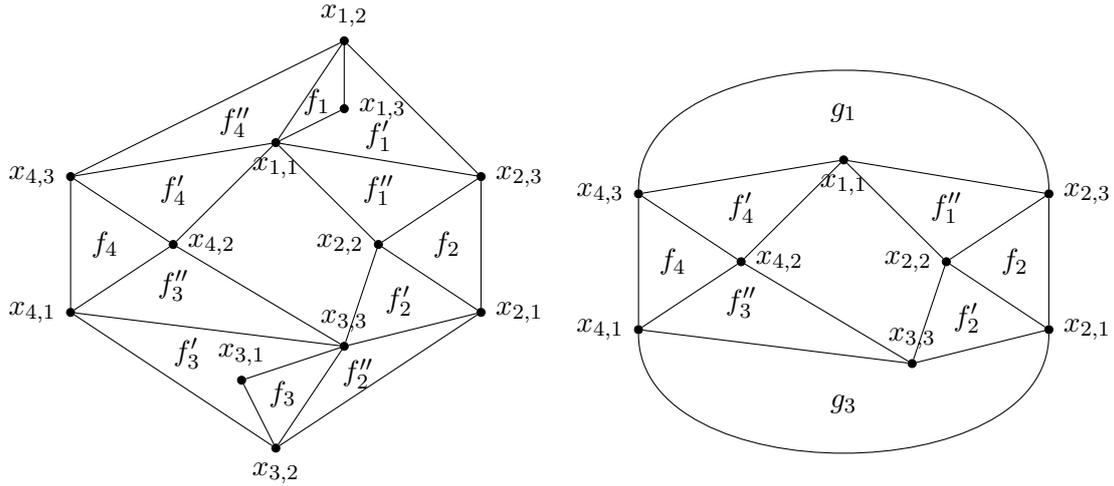
\begin{figure}[!tbp]
	\centering
	\begin{minipage}[b]{0.45\textwidth}
\begin{tikzpicture}[scale=0.9]
 
\node[vertex,label=below:$x_{1,1}$] (x11) at (0,0) {};
\node[vertex,label=above:$x_{1,2}$] (x12) at (1,1.5) {};
\node[vertex,label=right:$x_{1,3}$] (x13) at (1,0.5) {};

\node at (0.6,0.6) {$f_1$};
\node at (1.5,0.1) {$f_1'$};
\node at (1.5,-0.7) {$f_1''$};

\node[vertex,label=right:$x_{2,1}$] (x21) at (3,-2.5) {};
\node[vertex,label=left:$x_{2,2}$] (x22) at (1.5,-1.5) {};
\node[vertex,label=right:$x_{2,3}$] (x23) at (3,-0.5) {};

\node at (2.5,-1.5) {$f_2$};
\node at (1.8,-2.33) {$f_2'$};
\node at (1.2,-3.4) {$f_2''$};

\node[vertex,label=above:$x_{3,1}$] (x31) at (-0.5,-3.5) {};
\node[vertex,label=below:$x_{3,2}$] (x32) at (0,-4.5) {};
\node[vertex,label=above:$x_{3,3}$] (x33) at (1,-3) {};

\node at (0.1,-3.7) {$f_3$};
\node at (-1.3,-3.1) {$f_3'$};
\node at (-1.5,-2.1) {$f_3''$};

\node[vertex,label=left:$x_{4,1}$] (x41) at (-3,-2.5) {};
\node[vertex,label=right:$x_{4,2}$] (x42) at (-1.5,-1.5) {};
\node[vertex,label=left:$x_{4,3}$] (x43) at (-3,-0.5) {};

\node at (-2.5,-1.5) {$f_4$};
\node at (-1.5,-0.7) {$f_4'$};
\node at (-0.6,0.3) {$f_4''$};

\draw  (x13) edge (x12);
\draw  (x11) edge (x12);
\draw  (x13) edge (x11);

\draw  (x21) edge (x23);
\draw  (x23) edge (x22);
\draw  (x22) edge (x21);
\draw  (x31) edge (x32);
\draw  (x32) edge (x33);
\draw  (x31) edge (x33);
\draw  (x43) edge (x42);
\draw  (x42) edge (x41);
\draw  (x43) edge (x41);
\draw  (x12) edge (x23);
\draw  (x11) edge (x23);
\draw  (x11) edge (x22);
\draw  (x21) edge (x33);
\draw  (x21) edge (x32);
\draw  (x22) edge (x33);

\draw  (x42) edge (x11);
\draw  (x12) edge (x43);
\draw  (x43) edge (x11);
\draw  (x42) edge (x33);
\draw  (x41) edge (x32);
\draw  (x33) edge (x41);
 
\end{tikzpicture}
\end{minipage}
\begin{minipage}[b]{0.45\textwidth}
\begin{tikzpicture}[scale=0.9]
\node[vertex,label=below:$x_{1,1}$] (x11) at (0,0) {};

\node at (1.5,-0.7) {$f_1''$};

\node[vertex,label=right:$x_{2,1}$] (x21) at (3,-2.5) {};
\node[vertex,label=left:$x_{2,2}$] (x22) at (1.5,-1.5) {};
\node[vertex,label=right:$x_{2,3}$] (x23) at (3,-0.5) {};

\node at (2.5,-1.5) {$f_2$};
\node at (1.8,-2.33) {$f_2'$};

\node[vertex,label=above:$x_{3,3}$] (x33) at (1,-3) {};

\node at (-1.5,-2.1) {$f_3''$};

\node[vertex,label=left:$x_{4,1}$] (x41) at (-3,-2.5) {};
\node[vertex,label=right:$x_{4,2}$] (x42) at (-1.5,-1.5) {};
\node[vertex,label=left:$x_{4,3}$] (x43) at (-3,-0.5) {};

\node at (-2.5,-1.5) {$f_4$};
\node at (-1.5,-0.7) {$f_4'$};

\draw  (x21) edge (x23);
\draw  (x23) edge (x22);
\draw  (x22) edge (x21);
\draw  (x43) edge (x42);
\draw  (x42) edge (x41);
\draw  (x43) edge (x41);
\draw  (x11) edge (x23);
\draw  (x11) edge (x22);
\draw  (x21) edge (x33);
\draw  (x22) edge (x33);
\draw  (x42) edge (x11);
\draw  (x43) edge (x11);
\draw  (x42) edge (x33);
\draw  (x33) edge (x41);
\path (x23) edge [out=90,in=90] (x43);
\path (x41) edge [out=-90,in=-90] (x21);

\node at (0,0.7) {$g_1$};
\node at (0,-3.6) {$g_3$};
 \end{tikzpicture}
\end{minipage}
\caption{An illustration of a rainbow cycle of length 4, $f_1\rightarrow f_2\rightarrow f_3\leftarrow f_4 \leftarrow f_1$ (left), and the tight cycle we get after the operations (right).}
\label{fig:rainbowcycle}
\end{figure}

 \section{Concluding remarks}
  
  In this paper, we proved that if $\mathcal{S}$ is the triangulation of an orientable surface, then $\mbox{ex}_{hom}(n,\mathcal{S})=O(n^{5/2})$. Does the same bound hold for non-orientable surfaces? In particular, we propose the following conjecture.
  
  \begin{conjecture}
  If $\mathcal{H}$ is a 3-uniform hypergraph with $n$ vertices which does not contain a triangulation of the real projective plane, then $\mathcal{H}$ has $O(n^{5/2})$ edges.
  \end{conjecture}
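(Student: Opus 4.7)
The plan is to adapt the strategy of Theorem~\ref{thm:torus}: locate a short topological scaffold in $\mathcal{H}$ and then extend it, via supersaturation of spheres and disks, to a full triangulation of $\mathbb{RP}^2$. Since $\mathbb{RP}^2$ minus an open disk is homeomorphic to the M\"obius strip, the natural scaffold is a Klein bottle-like topological cycle $\mathcal{C}$, whose single boundary circle we then cap with a triangulated disk.

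First, I would establish an admissibility framework parallel to Lemmas~\ref{lemma:admissible}--\ref{lemma:3admissible}, but enriched so that most pairs of neighboring edges of $\mathcal{H}$ simultaneously (i) admit a double-pyramid interpolation between them, and (ii) sit on a large disk-like structure that can later serve as part of the capping disk. Second, I would prove an odd-parity version of Theorem~\ref{thm:cycles}: assuming $\mathcal{H}$ has at least $c n^{5/2}$ edges and is sufficiently far from 3-partite, produce a Klein bottle-like topological cycle of bounded length by pushing an oddness constraint through the rainbow-cycle machinery of Section~\ref{sect:cycles}. Third, by an argument parallel to Lemma~\ref{lemma:embedding_with_cyle}, I would glue spheres (double pyramids) around the edges of $\mathcal{C}$ and a triangulated disk across its boundary; the resulting hypergraph is a simplicial closed surface of Euler characteristic $1$, hence by the Classification theorem it is homeomorphic to $\mathbb{RP}^2$.

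The main obstacle is the 3-partite barrier. By Claim~\ref{claim:3partite}, no 3-partite hypergraph contains a Klein bottle-like topological cycle, yet $K^{(3)}_{n/3,n/3,n/3}$ has $\Theta(n^3)$ edges, so the scheme above cannot succeed on 3-partite-like inputs. This forces a dichotomy: in the ``far from 3-partite'' regime the plan above applies, while in the ``close to 3-partite'' regime one must directly embed a fixed 3-colorable triangulation $T$ of $\mathbb{RP}^2$ by a standard supersaturation argument for fixed 3-partite subhypergraphs inside a dense 3-partite host. The subtlest point I foresee is the topological input for this second case: one must exhibit an Eulerian triangulation $T$ of $\mathbb{RP}^2$, that is, a triangulation in which every vertex has even degree, which is equivalent (by the classical correspondence for closed-surface triangulations) to the triangulation being 3-colorable so that every face gets all three colors. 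The minimal 6-vertex triangulation has all vertex degrees equal to $5$ and thus fails; I would try to construct $T$ as the antipodal quotient of a sufficiently fine centrally symmetric Eulerian triangulation of $S^2$, and verify that this quotient is simplicial. Integrating this construction with the admissibility/dichotomy argument, and in particular making quantitative the notion of ``close to 3-partite'' in a way that matches both sides of the dichotomy, is where I expect the bulk of the technical difficulty to lie.
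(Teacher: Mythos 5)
You are addressing a statement the paper leaves as an \emph{open conjecture}, so there is no proof in the paper to compare against; I evaluate the plan on its own. The high-level dichotomy (far from vs.\ close to 3-partite) and the construction of a 3-partite triangulation $T$ of $\mathbb{RP}^2$ as the antipodal quotient of a centrally symmetric Eulerian subdivision of $S^2$ are both sensible starting points. One side remark is wrong: Eulerianness of a closed-surface triangulation is \emph{not} equivalent to 3-colorability in general (the $7$-vertex $K_7$-triangulation of the torus is Eulerian yet $7$-chromatic); but since you only need the existence of one 3-partite triangulation of $\mathbb{RP}^2$, your quotient construction still delivers.

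Both branches of the dichotomy, however, have gaps that go to the heart of the problem. In the ``close to 3-partite'' branch you cannot finish by embedding a \emph{fixed} triangulation $T$ of $\mathbb{RP}^2$ via supersaturation: if $T$ has $v$ vertices and hence $f=2v-2$ faces, the same probabilistic deletion argument as in the proof of Theorem~\ref{thm:lower_bound}, applied to the single hypergraph $T$, already gives $\operatorname{ex}(n,T)\ge n^{3-(v-3)/(2v-3)-o(1)}$, and this exponent is strictly greater than $5/2$ for every $v\ge 4$. So the extremal number of any one triangulation of $\mathbb{RP}^2$ exceeds the $n^{5/2}$ threshold; the entire point of the Brown--Erd\H{o}s--S\'os theorem and of this paper is that the \emph{family} of all triangulations is far easier to embed than any single member, and your plan abandons exactly that flexibility in the regime where it is indispensable. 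In the ``far from 3-partite'' branch, the gluing you describe cannot produce $\mathbb{RP}^2$: the construction in Lemma~\ref{lemma:toplogy} always yields a closed surface of Euler characteristic $0$ (each of the $r$ discs raises $\chi$ by one and each of the $r$ removed $2$-cells of $\mathcal C$ lowers it by one), and after the gluing each boundary $2$-edge $s_i$ of $\mathcal C$ already lies in two triangles, so there is no residual boundary circle left to ``cap'' with a disc; reaching $\chi=1$ needs a genuinely different gluing scheme. Finally, the rainbow-cycle machinery of Section~\ref{sect:cycles} begins by passing to a 3-partite subhypergraph, so by Claim~\ref{claim:3partite} it can only ever produce torus-like cycles; a Klein bottle-like analogue of Theorem~\ref{thm:cycles} is not an ``odd-parity version'' of the existing proof but an essentially new problem.
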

  
  As every non-orientable closed surface is homeomorphic to several real projective planes glued together, a positive answer to this conjecture together with  Lemma \ref{lemma:gluing} would imply that every closed surface has extremal number $\Theta(n^{5/2})$. More generally, let us highlight the conjecture of Linial \cite{L08,L18} mentioned in the introduction.
  
   \begin{conjecture}
  If $\mathcal{S}$ is a 3-uniform hypergraph, then there exists $c=c(\mathcal{H})>0$ such that $\mbox{ex}_{hom}(n,\mathcal{S})<cn^{5/2}$.
  \end{conjecture}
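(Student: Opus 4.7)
The plan is to bootstrap from the orientable-surface theorem (Theorem~\ref{thm:orientable}) using a topological decomposition of $\mathcal{S}$ combined with the gluing lemma (Lemma~\ref{lemma:gluing}). Given a 3-uniform hypergraph $\mathcal{S}$, its geometric realization is a compact 2-dimensional simplicial complex; first I would write $|\mathcal{S}|$ as an iterated edge-gluing of finitely many atomic pieces, namely closed surfaces (orientable and non-orientable), compact surfaces with boundary, and singular pieces such as several triangles sharing a common edge or several manifold pieces meeting at a single vertex. Such a decomposition is available for any 3-uniform hypergraph by passing to a barycentric subdivision and peeling off a regular neighborhood of the singular set where the link of a vertex fails to be a single circle.

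My second step would be to bound the extremal number of each atomic piece by $O(n^{5/2})$. Orientable closed surfaces are handled by Theorem~\ref{thm:orientable}. For a compact surface with boundary, one can cap off each boundary circle with a triangulated disk; the resulting closed surface contains the original piece as a subcomplex, so a homeomorphic copy of the capped surface automatically provides one of the original piece. For an edge-bouquet of triangles sharing a common edge, the admissibility framework of Section~\ref{sect:admissibleedges} gives the bound easily, since one only has to show that a typical edge has many triangle-extensions. Closed non-orientable surfaces reduce to the real projective plane case via connected sums, each of which can be realized through the gluing-and-removal trick already used in the proof of Theorem~\ref{thm:orientable} combined with Lemma~\ref{lemma:gluing}.

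The third step would be to reassemble the atomic pieces into a copy of $\mathcal{S}$. For pieces glued along a common edge, Lemma~\ref{lemma:gluing} handles the combination directly. For pieces glued along a common vertex, I would prove a vertex-gluing analogue of Lemma~\ref{lemma:gluing}: via a vertex-version of Lemma~\ref{lemma:rich}, one first shows that most vertices are ``rich'' in the sense that they lie in many copies of each piece under a random vertex sparsification, and then a random two-coloring argument analogous to the proof of Lemma~\ref{lemma:gluing} yields a common vertex shared by the desired copies of the two pieces.

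The main obstacle is the real projective plane case (Conjecture~6.1). Unlike the torus, the projective plane has Euler characteristic~$1$ and cannot be produced by gluing spheres along a topological cycle, a construction that invariably yields Euler characteristic~$0$. One would need a genuinely new mechanism, for instance a ``cross-cap admissibility'' certificate that detects a triangulated disk whose boundary loop is identified with itself via a half-turn. A further difficulty highlighted by Claim~\ref{claim:3partite} is that 3-partite host hypergraphs support no non-orientable topological cycles at all, so the mechanism cannot be purely cycle-based; it must exploit global parity beyond what any $3$-partite structure can provide, while still remaining amenable to the supersaturation-style arguments underpinning Lemmas~\ref{lemma:admissible} and~\ref{lemma:3admissible}.
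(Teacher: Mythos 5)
This statement is a \emph{conjecture} (due to Linial), not a theorem: the paper does not prove it, and it remains open. Your proposal is not a proof, and to your credit you already identify the central obstruction --- the real projective plane (Conjecture~6.1 in the paper). Since every non-orientable closed surface appears among the atomic pieces, and since the paper's only mechanism for building surfaces (gluing spheres along a topological cycle) always produces Euler characteristic zero, your plan has no way to produce any non-orientable piece, and your ``cross-cap admissibility'' notion is a placeholder for precisely the missing idea. So the proposal stalls at step two and does not yield the claimed bound.

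Beyond the projective-plane gap, the decomposition in step one is also more delicate than you indicate. The singular locus of a general 2-dimensional simplicial complex can be a graph (1-complex), not just a finite set of isolated edges and vertices; along this locus many sheets of surface may meet, and the manifold pieces obtained after removing a regular neighborhood need to be reattached along whole annular collars, not merely along single 2-faces or vertices. Lemma~\ref{lemma:gluing} only handles gluing along one common $r$-edge, and your proposed vertex-gluing analogue would not suffice to reassemble pieces identified along a 1-dimensional locus. This is exactly why Keevash, Long, Narayanan, and Scott, working in full generality, obtain only $n^{3-1/5}$: the general case is substantially harder than the surface case, and no argument of the form ``cut into surfaces and re-glue'' is currently known to reach $n^{5/2}$.

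Your observation about the 3-partite obstruction (via Claim~\ref{claim:3partite}) is a correct and useful diagnosis of why the cycle-gluing machinery cannot be extended to non-orientable targets: 3-partite host hypergraphs contain no Klein bottle--like topological cycles and hence no triangulation of any non-orientable surface, so any method that first passes to a dense 3-partite subhypergraph is doomed for those targets. Any successful approach must bypass this step. In short, you have correctly mapped out the plan one would like to execute and correctly located where it fails; the conjecture is open precisely because no one yet knows how to fill that hole.
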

  
  Finally, it would be interesting to see that which triangulations of the sphere appear in hypergraphs with $O(n^{5/2})$ edges. As we observed, one can find double-pyramids, and one can also glue several double-pyramids together to get a triangulation of the sphere. Are there other type of triangulations one can expect? More precisely, we propose the following conjecture which asks if one can expect to find bounded degree triangulations.
  
  \begin{conjecture}
  There exist two constants $d,c>0$ such that if $\mathcal{H}$ is a hypergraph with $n$ vertices and at least $cn^{5/2}$ edges, then $\mathcal{H}$ contains a triangulation of the sphere in which every vertex has degree at most $d$.
  \end{conjecture}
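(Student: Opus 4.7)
The plan is to realize the bounded-degree sphere as a \emph{small} double pyramid. Recall that the double pyramid on $s+2$ vertices $x,x',y_1,\dots,y_s$ is a triangulation of the sphere in which the two apexes have degree $s$ and each equator vertex $y_i$ has degree $4$. Hence it would suffice to find a pair $(x,x')$ together with a cycle of length at most some absolute constant in the common link graph $G_{x,x'}:=\mathcal{H}_x\cap\mathcal{H}_{x'}$. By Bondy--Simonovits, any graph on $N$ vertices with at least $C_k\,N^{1+1/k}$ edges contains a cycle of length at most $2k$, so the task reduces to producing some pair $(x,x')$ with $|E(G_{x,x'})|\ge n^{1+\varepsilon}$ for some absolute $\varepsilon>0$ depending only on $c$.

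First I would run the standard double-counting. Writing $d_{y,z}:=|N(y,z)|$, we have $\sum_{(y,z)}d_{y,z}=3|E(\mathcal{H})|$, so the average codegree is $\bar d=\Theta(c\,n^{1/2})$. Jensen's inequality gives
$$\sum_{(x,x')}|E(G_{x,x'})|=\sum_{(y,z)}\binom{d_{y,z}}{2}\ge \binom{n}{2}\binom{\bar d}{2}=\Omega(c^{2}n^{3}),$$
so some pair $(x,x')$ already satisfies $|E(G_{x,x'})|=\Omega(c^{2}n)$. Unfortunately $\Theta(n)$ edges on at most $n$ vertices only forces girth $O(\log n)$, producing a double pyramid of logarithmic (not constant) size, i.e.\ a triangulation of the sphere of maximum degree $O(\log n)$.

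The crux is therefore to \emph{boost} this bound to $|E(G_{x,x'})|\ge n^{1+\varepsilon}$ for some pair. My plan is to couple the admissibility machinery of Section~\ref{sect:admissibleedges} with a higher-moment analysis: Lemma~\ref{lemma:admissible} guarantees that, after discarding $O(n)$ edges, every remaining edge of $G_{x,x'}$ sits inside a $(k+1)$-vertex-connected subgraph. If one could additionally arrange that such a subgraph has minimum degree at least $n^{\delta}$, then its diameter would drop to $O(1/\delta)$ and Menger's theorem would supply $k$ internally disjoint $y$-$z$ paths of constant length between the endpoints of any admissible edge; any two of them close up to a short cycle and hence to a bounded-size double pyramid.

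The main obstacle lies precisely in obtaining this minimum-degree boost. Averaging caps the typical common link at $\Theta(n)$ edges, and the sparse random construction in the proof of Theorem~\ref{thm:lower_bound} has codegrees tightly concentrated around $\Theta(n^{1/2})$, making it plausible that in some extremal configurations \emph{no} pair $(x,x')$ enjoys $\omega(n)$ common link edges. If that is genuinely the case, the double-pyramid route is unsalvageable and one must hunt for a bounded-degree sphere of different topological type — for instance, an antiprism-like triangulation of maximum degree $5$ which has no apex vertex — directly via a new supersaturation argument. Singling out a specific bounded-degree sphere whose Tur\'an number is at most $c\,n^{5/2}$, and proving that extremal number bound without relying on assumptions that fail for the sparse random construction, appears to be the genuinely new extremal ingredient that the conjecture requires.
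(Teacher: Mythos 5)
This statement is posed in the paper as an open conjecture (in the concluding remarks); the paper contains no proof of it, so there is no paper argument to compare your attempt against. Your submission is also not a proof, and to your credit you say so explicitly at the end. Your analysis of the obstacle is sound: the Jensen/double-counting step indeed only yields a pair $(x,x')$ for which the common link $\mathcal{H}_x\cap\mathcal{H}_{x'}$ has $\Omega(c^2 n)$ edges, and a linear number of edges on at most $n$ vertices only forces a cycle of length $O(\log n)$; this produces a double pyramid whose two apexes have degree $\Theta(\log n)$, not a constant. Pushing some common link up to $n^{1+\varepsilon}$ edges, as your plan requires, cannot follow from the averaging alone, and --- as you observe --- a pseudorandom $3$-uniform hypergraph with $\Theta(n^{5/2})$ edges has codegrees concentrated near $\Theta(\sqrt n)$, so every common link then has only $\Theta(n)$ edges. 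Thus the double-pyramid route does look genuinely blocked, and your closing suggestion is the correct reading of the conjecture: it is really a Tur\'an problem for a specific (or a finite family of) bounded-degree spherical triangulations, for instance the octahedron, which as a $3$-uniform hypergraph is the complete $3$-partite $3$-graph $K^{(3)}_{2,2,2}$. The classical Erd\H{o}s bound only gives $\mbox{ex}\bigl(n,K^{(3)}_{2,2,2}\bigr)=O(n^{11/4})$, so even this smallest special case exceeds the conjectured $n^{5/2}$ and would need a new supersaturation-type idea; you have accurately located where the missing ingredient must come from, but you have not supplied it.
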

  
 \section*{Acknowledgements}
 We would like to thank Bhargav Narayanan for bringing this problem to our attention, and Oliv\'er Janzer and Benny Sudakov for valuable discussions. Also, we would like to thank Jacques Verstra\"ete for sharing his ideas of finding the extremal number of the tight cycle of length 24 (which we did not end up using).
 
 Istv\'an Tomon was supported by the SNSF grant 200021\_196965 and MIPT Moscow. Also, all authors acknowledge the support of the grant of the Russian Government N 075-15-2019-1926.

\end{document}